\documentclass[11pt,a4paper,reqno]{amsart}
\usepackage{amsfonts}
\usepackage{amsthm}
\usepackage{amsmath}
\usepackage{amscd}
\usepackage[latin2]{inputenc}
\usepackage{t1enc}
\usepackage[mathscr]{eucal}
\usepackage{indentfirst}
\usepackage{graphicx}
\usepackage{graphics}
\numberwithin{equation}{section}
\usepackage[margin=2.9cm]{geometry}
\usepackage{epstopdf} 
\usepackage{MnSymbol}
\usepackage{float} 
\usepackage{amsmath}
\usepackage{tikz}
\usetikzlibrary{decorations.markings}
\usetikzlibrary{decorations.pathreplacing,angles,quotes}
\usepackage{url}
\usepackage{pgfplots}
\usetikzlibrary{cd}
\usetikzlibrary{calc}
\usepackage{esint}
\usepackage{empheq} 
\usepackage{enumerate}
\usepackage{graphicx}
\usepackage{subcaption} 
\usepackage{makecell}
\usepackage{longtable}
\usepackage[makeroom]{cancel}
\usepackage{mathrsfs}
\usepackage{calc}

\tikzset{->-/.style={decoration={
  markings,
  mark=at position .5 with {\arrow{>}}},postaction={decorate}}}

\theoremstyle{definition}
\newtheorem{mythm}{Theorem}[section]
\newtheorem{mydef}[mythm]{Definition}
\newtheorem{mycor}[mythm]{Corollary}
\newtheorem{mylem}[mythm]{Lemma}
\newtheorem{myprop}[mythm]{Proposition}

\newtheorem{myrem}[mythm]{Remark}

\linespread{1.1}

\begin{document}

\title{Ricci Flow Singularity for Triaxial Bianchi IX Metric}

\author[M.S. Johar]{M. Syafiq Johar}

\address{Modelling and Data Science Centre \\ National University of Malaysia \\ Bangi \\ Malaysia} 

\email{msyajoh@ukm.edu}


\begin{abstract} In this work, we are going to find sufficient conditions on the initial triaxial Bianchi IX metric on some $4$-dimensional manifolds foliated by homogeneous $S^3$ for a Type I singularity to occur when it is flowed under the Ricci flow. This work generalises the study on rotationally symmetric manifolds done by Angenent and Isenberg \cite{AK} as well as the work of Isenberg, Knopf, and \v Se\v sum \cite{IKS}, in which they introduced some ansatz for the problem setup.
\end{abstract}

\maketitle

\tableofcontents

\section{Introduction} The Ricci flow is an evolution equation of the metric on a Riemannian manifold $(M,g(t))$ introduced by Richard Hamilton in 1982 for the study of the famous Poincar\'e conjecture. This metric evolution equation is given by:
\begin{align} \frac{\partial}{\partial t} g(t) &= -2 \text{Ric}(g(t)), \label{ricciflow}
\\ g(0)&=g_0, \label{ricciflow2} \end{align}
where $\text{Ric}$ is the Ricci curvature of the metric $g(t)$ at time $t \geq 0$. 

This geometric flow has been important in the study of topological properties of manifolds. In particular, it was instrumental in the proof of the Poincar\'e Conjecture and the Thurston Geometrisation Conjecture by Perelman in early 2000s.

Due to the non-linear nature of the Ricci flow, we can only guarantee the short time existence of the solution to this flow from any initial data. As a result, the flow might run into a singularity at some finite time. The finite-time singularities of the Ricci flow can be classified into two types, which are called the Type I and Type II singularities, depending on the curvature asymptotics. 

A Type I singularity occurs in a Ricci flow if the singularity occurs at a finite time $T < \infty$ such that we have the following curvature asymptotics:
\begin{equation}\sup_{M \times [0,T)}(T-t)|\text{Rm}(\cdot,t)|_g < \infty, \label{type1} \end{equation}
and by dichotomy, Type II singularity occurs if we have the following curvature asymptotics:
\begin{equation}\sup_{M \times [0,T)}(T-t)|\text{Rm}(\cdot,t)|_g = \infty, \label{type2} \end{equation}
where $\text{Rm}$ is the Riemann curvature tensor and $|\cdot|_g$ is the norm on $4$-tensors induced from the metric $g$.

Studying the singularities of the Ricci flow has been a long-standing subject of research in differential geometry. There has been an interest in constructing explicit initial data which develop singularities and modelling these singularities with self-similar Ricci flow solutions, which are called the Ricci solitons. 

In this work, we are going to give some sufficient conditions for a Type I singularity to develop for the triaxial Bianchi IX metric. The theorems that we are going to prove here are:

\begin{mythm} \label{compare} Suppose that we run a Ricci flow on the manifold $(S^1 \times S^3,g_0)$ where the initial metric $g_0$ is of the form:
\begin{equation}
g_0 = \phi(z)^2 \, dz^2 +a(z)^2 \omega^1 \otimes \omega^1 + b(z)^2 \omega^2 \otimes \omega^2 + c(z)^2 \omega^3 \otimes \omega^3, \label{triax}
\end{equation}
for $z \in S^1=[0,2\pi)$ and $0 \leq a(z) \leq b(z) \leq c(z)$. Then the Ricci flow preserves the form of the metric, the ordering of the metric components, and there exist constants $\tilde{C}_0,\bar{C}_0>0$ depending on the data on the initial data such that for all $(z,t) \in S^1_T=S^1 \times [0,T)$, we have: 
\begin{align*}
|b(z,t)-c(z,t)| & \leq \tilde{C}_0 \min(b(z,t),c(z,t)),
\\ |a(z,t)-c(z,t)| & \leq \bar{C}_0 \min(a(z,t),c(z,t)).  \tag*{$\blacksquare$}
\end{align*}
\end{mythm}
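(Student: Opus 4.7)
\emph{Proof strategy.} The starting point is to derive the evolution equations satisfied by the coefficients $\phi, a, b, c$ under (\ref{ricciflow}). Since the ansatz (\ref{triax}) is invariant under the natural $SU(2)$ left-action on each $S^3$ slice and the Ricci flow is diffeomorphism-equivariant, the form of the metric is preserved along the flow. This yields a coupled parabolic system of the schematic type
\begin{equation*}
\partial_t a = \frac{a_{zz}}{\phi^2} - \frac{\phi_z a_z}{\phi^3} + F_a(a,b,c),
\end{equation*}
and similarly for $b, c$, where $F_a, F_b, F_c$ are the Bianchi~IX curvature reactions, together with an auxiliary gauge equation governing $\phi$.

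Next, I would show the ordering $a \leq b \leq c$ is preserved by applying the parabolic maximum principle to the differences $c - b$ and $b - a$. The crucial observation is that the reactions $F_c - F_b$ and $F_b - F_a$ vanish on the diagonals $b = c$ and $a = b$ respectively: this follows from the explicit Bianchi~IX formulae, in which the anisotropy factors to exhibit $(c-b)$ or $(b-a)$ explicitly. Hence any zero of $c - b$ (resp.\ $b - a$) cannot be crossed from the positive side, and the ordering persists for as long as the flow exists.

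For the quantitative bounds, the natural objects to track are
\begin{equation*}
u := \log(c/b) \geq 0, \qquad v := \log(c/a) \geq 0,
\end{equation*}
since a uniform upper bound on $u, v$ is equivalent, after exponentiating and rearranging, to the stated control on $|b-c|/\min(b,c)$ and $|a-c|/\min(a,c)$. Dividing the PDEs for $c, b$ (resp.\ $c, a$) and subtracting yields
\begin{equation*}
\partial_t u = \frac{u_{zz}}{\phi^2} - \frac{\phi_z u_z}{\phi^3} + H(u_z) + G_u(a,b,c),
\end{equation*}
and analogously for $v$, where $H$ is a lower-order transport term quadratic in $u_z$ vanishing when $u_z = 0$. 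At an interior spatial maximum of $u$ one has $u_z = 0$ and $u_{zz} \leq 0$, so the maximum principle delivers $\sup_z u(\cdot,t) \leq \sup_z u(\cdot,0)$ provided the reaction $G_u$ is non-positive at such a point; the constants $\tilde C_0, \bar C_0$ are then $e^{\sup u(\cdot,0)} - 1$ and $e^{\sup v(\cdot,0)} - 1$.

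The main obstacle is precisely this sign verification for the reactions $G_u, G_v$. Unlike the differences $c-b, b-a$, whose reactions vanish on the diagonal automatically, the reactions in the equations for $\log(c/b), \log(c/a)$ do not vanish and their signs depend delicately on the relative sizes of $a, b, c$. To handle this I would use the already-preserved ordering $a \leq b \leq c$ to factor the explicit Bianchi~IX expressions and isolate non-positive pieces; the residual positive terms, being of lower order in the anisotropy, can be dominated by the diffusive contribution on a sub-solution barrier of the form $u \leq \tilde C_0$ provided $\tilde C_0$ is chosen large enough in terms of the initial oscillation. Iterating the maximum principle with these barriers should close the argument and yield the stated constants depending only on the initial data.
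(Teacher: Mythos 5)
Your overall strategy matches the paper's: express the anisotropy through normalized quantities, show their evolution has a reaction term of favorable sign given the preserved ordering, and close with a parabolic maximum principle. The paper tracks the eccentricities $\zeta/b=(b-c)/b$, $\zeta/c$, $\chi/a=(a-c)/a$, $\chi/c$ (equations (\ref{ecc3})--(\ref{ecc6})), whereas you track $u=\log(c/b)$ and $v=\log(c/a)$; these are interchangeable parametrizations of the same ratios, and your version is actually slightly more economical since only two quantities are needed once the ordering $a\le b\le c$ reduces $\min$ to $b$ or $a$.

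The genuine gap is that you stop exactly at the crucial step. You write that the signs of the reactions $G_u,G_v$ ``depend delicately'' on $a,b,c$ and propose a barrier argument to dominate ``residual positive terms.'' In fact no residual positive terms arise: carrying out the computation from the semilinear system (\ref{riccipde1})--(\ref{riccipde3}), one finds
\begin{equation*}
\partial_t u = u'' + \left(\frac{a'}{a}+\frac{b'}{b}+\frac{c'}{c}\right)u' + \frac{4(c^2-b^2)(a^2-b^2-c^2)}{(abc)^2},
\end{equation*}
and $a\le b\le c$ makes $(c^2-b^2)\ge 0$ and $(a^2-b^2-c^2)<0$, so the reaction is non-positive everywhere (not just at a spatial maximum). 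Likewise
\begin{equation*}
\partial_t v = v'' + \left(\frac{a'}{a}+\frac{b'}{b}+\frac{c'}{c}\right)v' + \frac{4(a^2-c^2)(a^2+c^2-b^2)}{(abc)^2},
\end{equation*}
and $a\le c$, $b\le c$ give $(a^2-c^2)\le 0$, $(a^2+c^2-b^2)>0$, so again the reaction is non-positive. (There is also no quadratic-in-$u_z$ term $H(u_z)$ in the drift; passing to $\log$ produces only the first-order transport $\left(\frac{a'}{a}+\frac{b'}{b}+\frac{c'}{c}\right)u'$.) Once this algebra is done, the maximum principle closes the argument immediately and your speculative barrier/domination step is unnecessary. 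This is exactly what the paper does via its equations (\ref{ecc3})--(\ref{ecc6}), whose coefficients factor as $-4(b^2+c^2-a^2)(\cdots)$ and $-4(a^2+c^2-b^2)(\cdots)$ with manifestly non-negative parenthesized factors. A secondary caution: when proving ordering preservation you propose to track $c-b$ and $b-a$ directly, but their reaction coefficients involve $1/a^2$ and may be unbounded; the paper instead uses $\xi/a$ and $\zeta/b$ and invokes the maximum principle (Theorem~\ref{maximumprinciple3}) that tolerates coefficients blowing up near the boundary, which you should adopt as well.
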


\begin{mythm} \label{theorem1}
There exist open sets of warped metrics on $S^1 \times S^3$ of the form:
 $$g = \phi(z)^2 \, dz^2 +a(z)^2 \omega^1 \otimes \omega^1 + b(z)^2 \omega^2 \otimes \omega^2 + c(z)^2 \omega^3 \otimes \omega^3,$$
for $z \in S^1=[0,2\pi)$, satisfying:
\begin{enumerate}
\item $0< a \leq b \leq c < 2 a$ at $t=0$,
\item $\displaystyle \min_{s,t=0} (\text{S}) \geq 0$ at $t=0$,
\item there exists a $T < \infty$ such that $ \displaystyle \limsup_{t \rightarrow T} \max_s | \text{Ric}(s, t)|=\infty $,
\end{enumerate}
such that all solutions of the Ricci flow originating in these sets develop local neckpinch singularities at
some $T < \infty$. Each such solution has the properties that:
\begin{enumerate}
\item the ordering $a \leq b \leq c$ is preserved,
\item  the singularity is of Type I with $ | \text{Rm} | \leq \frac{C}{
 \min_s (a^2)}$ and $\displaystyle D\sqrt{T-t} \leq \min_s(a) \leq C\sqrt{T-t} $ for some constants $C,D >0$. \hfill $\blacksquare$
\end{enumerate}
\end{mythm}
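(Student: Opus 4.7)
The overall plan is to deduce both the neckpinch geometry and the Type~I rate from Theorem~\ref{compare}, combined with preservation of non-negative scalar curvature and the explicit formulae for the Riemann tensor of the triaxial Bianchi~IX ansatz. By Theorem~\ref{compare} the ordering $a \leq b \leq c$ is preserved for all time, and the two pinching estimates together give constants $0 < \lambda \leq \Lambda$ with $\lambda a \leq b, c \leq \Lambda a$ pointwise. Feeding these into the explicit Riemann tensor --- a rational expression in $a, b, c$ and their $z$-derivatives --- yields $|\text{Rm}| \leq C / \min_s(a^2)$, once the derivative terms are controlled by Shi-type parabolic regularity estimates. Condition~(3) then forces $\min_s(a) \to 0$ as $t \to T$: otherwise all three radii would remain bounded below by the pinching and the curvature would stay bounded, contradicting the Ricci blow-up. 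This identifies the singular locus as the $T^2$-orbit where $a$ vanishes, so the singularity is a local neckpinch.

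The core of the argument is then a two-sided ordinary differential inequality for $f(t) := \min_s a(\cdot, t)^2$. At an interior minimiser $s^*(t)$ the $z$-derivatives of $a$ vanish, and the Ricci flow system under the triaxial Bianchi~IX ansatz reduces the evolution of $a^2$ to a polynomial in $1/a, 1/b, 1/c$. Using the pinching, the strict inequality $c < 2a$ (whose preservation in a quantitative form is a consequence of Theorem~\ref{compare}), and the preservation of $\text{S} \geq 0$ via the maximum principle applied to $\partial_t \text{S} = \Delta \text{S} + 2|\text{Ric}|^2$, this right-hand side can be trapped between two negative constants, $-K \leq f'(t) \leq -D$ for universal $K, D > 0$. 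Integrating from $t$ to $T$ with $f(T) = 0$ gives $D(T-t) \leq f(t) \leq K(T-t)$, which translates into $\sqrt{D}\,\sqrt{T-t} \leq \min_s(a) \leq \sqrt{K}\,\sqrt{T-t}$ and, via the curvature bound above, into the Type~I estimate $|\text{Rm}|(T-t) \leq C'$.

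Existence of the open set of admissible initial data is obtained by first exhibiting an explicit dumbbell-type metric with a pinched neck satisfying~(1) strictly and~(2); a barrier argument comparing $\min_s(a^2)$ against the ODE $y' = -D$ then forces a finite singular time, giving~(3). Openness follows because all three conditions are open in, say, the $C^3$ topology on initial data, and the Ricci flow depends continuously on this data over any fixed time interval before $T$. I expect the main obstacle to be the upper bound $f'(t) \leq -D < 0$: Theorem~\ref{compare} provides \emph{a priori} comparability of $a, b, c$ but not a strictly negative constant on its own, and one must carefully exploit the strict pinch $c < 2a$ --- which is exactly what keeps the $S^3$-sectional curvatures positive and drives the neck to shrink --- together with $\text{S} \geq 0$ to extract a definite shrinking rate. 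Tracking how the constants $C, D, K$ degenerate as $c/a \nearrow 2$ is the delicate technical point, and it is also what pins down the admissible initial data to the explicit regime in hypothesis~(1).
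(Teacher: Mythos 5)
Your outline identifies the right ingredients (pinching, preservation of $\text{S}\ge 0$, explicit curvature formulae, open conditions in a suitable topology), but the central step --- the lower bound $\check a(t)^2 \ge D(T-t)$, which you correctly flag as the hard part --- does not go through the way you describe. You propose to get a two-sided ODE inequality for $f(t)=\min_s a^2$ by evaluating the evolution equation at an interior minimiser. At that point $a'=0$ and $a''\ge 0$, and the evolution reads
\begin{equation*}
\frac{d}{dt}\check a \;=\; a'' \;-\; 2a\,\frac{a^4-(b^2-c^2)^2}{(abc)^2}.
\end{equation*}
The reaction term is fine, but $a''\ge 0$ is uncontrolled from above, so you cannot conclude $f'(t)\le -D$ directly; the maximum-principle sign only helps for the \emph{lower} bound on $f'$, i.e. the upper bound $\check a^2\le 4(T-t)$. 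Shi-type regularity estimates also do not rescue this: they give local curvature-dependent derivative bounds, not the global negativity of $a''$ at the minimum. The paper circumvents this by tracking $\min_s(abc)$ rather than $\min_s a$. The key observation is that the scalar curvature, once proven nonnegative by the maximum principle, contains exactly the combination $\frac{a''}{a}+\frac{b''}{b}+\frac{c''}{c}$, so $\text{S}\ge 0$ gives an upper bound on this sum which kills the second-derivative contribution in $\partial_t\log(abc)$. One then lands on a polynomial estimate for $P\bigl(\frac{b}{a},\frac{c}{a}\bigr)$ with $P(x,y)=1+(x^2-y^2)^2-2x^2-2y^2$, whose negativity on the relevant region is precisely what requires $c/a<2$ (since $P(1,y)=y^4-4y^2<0$ iff $y<2$). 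This yields $\frac{d}{dt}(abc)\lesssim -(abc)^{1/3}$, integrable to a power-law decay, and comparability $\check a^3\gtrsim abc$ transfers it to $a$.

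Two secondary gaps: first, the estimates of Theorem~\ref{compare} give comparability $a\sim b\sim c$ with \emph{some} constant, but the argument needs the \emph{specific} ratio bound $c\le\lambda a$ with $\lambda<2$ propagated forward in time; the paper proves this separately (Lemma~\ref{moremax1}) by a maximum-principle argument on $c/a$, and you should not conflate the two. Second, for the derivative bounds entering $|\text{Rm}|$ the paper does not invoke Shi estimates but bounds $a',b',c'$ by a direct maximum-principle/Young's-inequality argument on their parabolic evolution equations, and then bounds the second-derivative curvatures $K_{0i}$ by tracking $K_{0i}\pm\bigl(3\tfrac{(a')^2}{a^2}+\tfrac{(b')^2}{b^2}+\tfrac{(c')^2}{c^2}\bigr)$, which requires the already-established two-sided bound on $\check a$. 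Your proposal reverses this logical order.
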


\section{Previous Work}

The first non-trivial concrete example for an initial metric which leads to the Type I singularity has been constructed by Angenent and Knopf in 2004 \cite{AK}. In the paper, they constructed this metric by considering a warped product metric on $S^3$ of the form: 
\begin{equation}g_0 = ds^2+\psi(s)^2\,\hat{g}_{S^2}, \label{angknop}\end{equation}
where $\hat{g}_{S^2}$ is the canonical metric on the unit $2$-sphere and $\psi(s)$ is a function on the interval $I=\left(-\frac{\pi}{2},\frac{\pi}{2} \right)$. The function $\psi(s)$ describes the radius of the $S^2$ spheres which foliate the $S^3$ manifold over the interval $I$ such that $\psi$ vanishes at the boundaries of the interval $I$ and $\psi'(s) \rightarrow \pm 1$ as $s \rightarrow \mp \frac{\pi}{2}$. This way, they reduced the Ricci flow equation to a one-dimensional parabolic PDE, which simplifies the problem considerably.

This family of explicit examples was generalised further by Isenberg, Knopf, and \v Se\v sum \cite{IKS}. Instead of considering the warped product metric on $S^3$ of the form (\ref{angknop}), they considered a warped product metric on $S^1 \times S^3$. The base of the foliation $S^1$ simplifies the analysis in \cite{AK} considerably by doing away the technical analysis at the poles of the manifold, which are special orbits of the symmetry. Furthermore, the fibre manifold $S^3$ is parallelisable, which means that there exists a global vector field frame $\{E_1,E_2,E_3\}$ on this manifold. Therefore, a more general form for the metric on the fibre is given by:
\begin{align*}
g=a^2 \omega^1 \otimes \omega^1 + b^2 \omega^2\otimes \omega^2 +c^2 \omega^3 \otimes \omega^3,
\end{align*}
for some positive constants $a,b,$ and $c$ such that $\{\omega^1,\omega^2,\omega^3\}$ are dual frames to the vector fields $\{E_1,E_2,E_3\}$. Instead of this general form, they studied the biaxial case for simplicity; that is, they set $b \equiv c$. In short, they studied the Ricci flow of metrics on $S^1 \times S^3$ of the form:
\begin{align*}
g=\phi(z)^2 ds^2+a(z)^2 \omega^1 \otimes \omega^1 +c(z)^2 (\omega^2 \otimes \omega^2 +\omega^3 \otimes \omega^3), 
\end{align*}
where $a(z)$ and $c(z)$ are $2\pi$-periodic positive functions of $z$. Metrics of this form are also known as the biaxial Bianchi IX metric or the warped Berger metric.

Under assumptions on the initial ordering of the metric components $a$ and $c$ as well as curvature assumptions, they managed to find sufficiency conditions for a Type I singularity to occur. Furthermore, with additional conditions, they proved that the singularity resembles a shrinking cylinder after scaling, which is similar to what was done by Angenent and Knopf for the $S^3$ case.  More specifically, they proved:

\begin{mythm} \cite{IKS} \label{IKS1}
The eccentricity of every warped Berger solution of Ricci flow is uniformly bounded: there exists a constant $C_0>0$ depending only on the initial data such that the estimate:
$$|a - c| \leq C_0 \min(a,c),$$
holds pointwise for as long as the solution exists. \hfill $\blacksquare$
\end{mythm}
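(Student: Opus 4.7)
The plan is to obtain a pointwise bound on $c/a$ via the parabolic maximum principle applied to a suitable scalar function, after recasting the Ricci flow in the natural arclength parameter $s$ on the base circle.

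First, I would convert the Ricci flow into a coupled parabolic system for the scalar functions $a(z,t), c(z,t), \phi(z,t)$ on $S^1\times[0,T)$. Letting $s$ be the arclength parameter defined by $ds=\phi\,dz$ (so that $\partial_s=\phi^{-1}\partial_z$), the evolution equations for $a$ and $c$ acquire the schematic form
\begin{align*}
a_t &= a_{ss}+(\text{first-order transport in }a_s,c_s)-\frac{2a^3}{c^4},\\
c_t &= c_{ss}+(\text{first-order transport in }a_s,c_s)+\frac{2a^2-4c^2}{c^3},
\end{align*}
where the algebraic terms on the right come from the Ricci curvature of the biaxial left-invariant metric with squared eigenvalues $a^2,c^2,c^2$ on the $S^3$ fibre, read off from the Milnor formulas for the Ricci tensor of a left-invariant metric on $SU(2)$.

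Next I would consider the auxiliary scalar $u(z,t):=\log(c/a)\ge 0$, where nonnegativity uses the preserved ordering $a\le c$. Subtracting $\partial_t\log a$ from $\partial_t\log c$ and using the PDE system above, $u$ satisfies an evolution equation of the form
$$u_t=u_{ss}+(\text{transport terms proportional to }u_s)+R(a,c),$$
in which the algebraic reaction simplifies to $R(a,c)=-\tfrac{4(c^2-a^2)}{c^4}\le 0$ whenever $a\le c$. This nonpositivity is the crucial \emph{mean-reverting} feature: the Ricci curvature of biaxial Bianchi IX algebraically drives $c$ towards $a$. Given this sign, Hamilton's parabolic maximum principle closes the argument: at a spatial maximum of $u(\cdot,t)$ one has $u_s=0$ and $u_{ss}\le 0$, so $\tfrac{d}{dt}\max_{S^1}u(\cdot,t)\le R(a,c)\le 0$. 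Hence $\max_{S^1}u(\cdot,t)\le\max_{S^1}u(\cdot,0)=:C_0'$ for as long as the flow exists, and exponentiating yields $c/a\le e^{C_0'}$, equivalently $|c-a|\le (e^{C_0'}-1)\min(a,c)=:C_0\min(a,c)$.

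The main expected obstacle is the preliminary step of showing that the ordering $a\le c$ is preserved (needed for both $u\ge 0$ and the sign of $R$). This requires a parallel maximum-principle argument applied to the scalar $c-a$, whose evolution equation has a reaction term that must likewise be verified to keep $c-a\ge 0$; the algebraic bookkeeping here, especially isolating a clean factor of $(c-a)$ from the competing Ricci contributions, is the most delicate part of the computation. A more technical but ultimately minor issue is that the first-order transport terms in $u_t$ involve $\phi_s/\phi$ (and thus depend on the separately evolving warping function), but since they all multiply $u_s$ they vanish automatically at the spatial extremum. If the cleaner identity $R\le 0$ should fail in the full generality required and only a weaker inequality $R\le Cu$ is available, one would still recover a uniform-in-space bound on any finite time interval via Gr\"onwall, which suffices for the statement.
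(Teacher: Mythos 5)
Your proposal is correct and follows essentially the same route as the paper's proof of the triaxial analogue (Theorem 1.1) and the IKS argument it generalises: derive a parabolic evolution equation for a ratio-type eccentricity quantity, observe that the algebraic reaction term is mean-reverting, and apply a parabolic maximum principle. The only cosmetic difference is your choice of $u=\log(c/a)$, where the paper works directly with $\chi/a=(a-c)/a$ and $\chi/c=(a-c)/c$ (equations (\ref{ecc5})--(\ref{ecc6}) specialised to $b\equiv c$) and applies Theorem \ref{maximumprinciple1}, the linear maximum principle with a non-positive zeroth-order coefficient; your version instead uses a Hamilton-style tracking of $\max_s u$ and needs the ordering $a\le c$ as a separate preliminary step, whereas observing that $R(a,c)=-\tfrac{4a^2}{c^4}(e^{2u}-1)$ is of the form $g(x,t)\,u$ with $g<0$ would let the linear maximum principle give both the preservation of sign and the bound simultaneously. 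One small slip in your description: in the arclength gauge the first-order transport coefficient in the $u$-equation works out to $2c'/c+a'/a$ rather than involving $\phi_s/\phi$; this changes nothing since, as you say, the transport terms vanish at a spatial extremum.
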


\begin{mythm}\label{IKS2} \cite{IKS}
There exist open sets of warped Berger metrics satisfying: 
\begin{enumerate}
\item $0<a \leq c$ at $t=0$,
\item $\displaystyle \min_{s,t=0} (\text{S}) \cdot \max_{s,t=0} (c^2)>-3$ at $t=0$,
\item there exists $T < \infty$ such that $\displaystyle \limsup_{t \rightarrow T} \max_s | \text{Ric}(s, t)| $,
\end{enumerate}

such that all solutions originating in these sets develop local neckpinch singularities at
some $T < \infty$. Each such solution has the properties that:
\begin{enumerate}
\item the ordering $a \leq c$ is preserved,
\item  the singularity is Type I with $ | \text{Ric} | \leq \frac{C}{ \min_s (a^2)}$ and $ \frac{\sqrt{T-t}}{C} \leq \displaystyle{\min_s(a)} \textstyle \leq C\sqrt{T-t} $,
\item the diameter of $(M,g)$ is bounded as $t \rightarrow T$. \hfill $\blacksquare$
\end{enumerate}
\end{mythm}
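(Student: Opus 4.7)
The plan is to reduce the Ricci flow on $S^1\times S^3$ with the given biaxial Berger ansatz to a system of parabolic PDEs for $a(z,t)$ and $c(z,t)$, and then carry out successive maximum principle arguments. First I would switch to arc-length parametrisation $s$ along $S^1$ (so $\phi\equiv 1$ in this gauge) and write explicit evolution equations of the form $\partial_t a = a_{ss}+F_a(a,c,a_s,c_s)$, $\partial_t c = c_{ss}+F_c(a,c,a_s,c_s)$, together with the commutator $[\partial_t,\partial_s]$ which picks up a curvature contribution. I would also derive the evolution equations for the four distinct sectional curvatures of the biaxial Bianchi IX geometry and for the scalar curvature $S$.

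Second, I would establish invariance of the structural hypotheses. Preservation of $a\le c$ follows from applying the scalar maximum principle to $\log(c/a)$: at a first spatial minimum where $\log(c/a)=0$ the reaction part has a sign that pushes the quantity back up. For the scalar curvature hypothesis $\min_s S\cdot\max_s c^2>-3$, I would study the scale-invariant product $S c^2$ using $\partial_t S=\Delta S+2|\text{Ric}|^2$ together with the evolution of $c^2$ and the eccentricity bound supplied by Theorem \ref{IKS1}. This yields a uniform lower Ricci bound of the shape $\text{Ric}\ge -C/c^2$, valid up to the singular time.

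Third, since Theorem \ref{IKS1} already gives $|a-c|\le C_0\min(a,c)$, the functions $a$ and $c$ remain pointwise comparable, so any finite-time curvature blowup must be driven by $\min_s a\to 0$, i.e.\ a local neckpinch centred where $a$ attains its spatial minimum. The main quantitative step is the scale-invariant estimate $|\text{Ric}|\le C/\min_s a^2$: I would apply the maximum principle to $Q=a^2|\text{Rm}|$ (or component-wise to $a^2 K_{ij}$), compute $(\partial_t-\Delta)Q$, and use the eccentricity and lower Ricci bound to rewrite the reaction term in the form $C_1-C_2 Q^2$. An ODE comparison then yields $Q\le C$, hence $|\text{Ric}|\le C/a^2$. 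The two-sided estimate $C^{-1}\sqrt{T-t}\le\min_s a\le C\sqrt{T-t}$ follows from $|\partial_t a^2|\le C$ (a direct consequence of the Ricci bound combined with $\partial_t a=-a\,\text{Ric}(E_1,E_1)$) integrated against $\min_s a(T)=0$. The diameter bound follows because away from the pinching locus the metric components remain uniformly bounded, so $\int_{S^1}\phi\,dz$ has a uniform upper bound.

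The main obstacle I expect is establishing the scale-invariant curvature bound $Q\le C$. The evolution of each $a^2 K_{ij}$ contains cross terms of the form $K_{ij}K_{kl}$ that do not decouple in the biaxial Bianchi IX geometry, and closing them into a single scalar ODE requires both the eccentricity bound in a sharp pointwise form and a careful choice of $Q$, possibly taking the maximum over sectional planes with weights reflecting the biaxial symmetry. Auxiliary monotone quantities such as a modified scalar curvature may be needed to absorb the worst cross terms; this is precisely where the hypothesis $\min_s S\cdot\max_s c^2>-3$ is likely to enter quantitatively rather than qualitatively, by controlling the sign of the leading reaction term in the evolution of $Q$.
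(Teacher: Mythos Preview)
This statement is quoted from \cite{IKS} as previous work; the present paper does not supply its own proof of it. What the paper does prove is the triaxial generalisation, Theorem~\ref{theorem1}, and that argument specialises to the biaxial case. Comparing your proposal to that argument reveals a genuine structural difference and a gap.

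The paper (and, by specialisation, \cite{IKS}) runs the logic in the opposite order from yours. First the upper bound $\check a(t)^2\le 4(T-t)$ is obtained directly from the evolution of $a$ at its spatial minimum, with no curvature estimate needed. Second, the lower bound $\check a(t)^2\ge D(T-t)$ comes from tracking $\min_s\log(abc)$ (in the biaxial case $\log(ac^2)$): the scalar-curvature hypothesis is used here, to force a differential inequality of the form $\frac{d}{dt}(abc)\le -D_1(abc)^{1/3}$ along the minimising path. Only after the two-sided $\sqrt{T-t}$ bound is established does the paper turn to curvature, and then not via a maximum principle on $a^2|\text{Rm}|$ but in stages: uniform bounds on $|a'|,|b'|,|c'|$ via Young's inequality on their evolution equations; algebraic bounds on the tangential $K_{ij}$; and finally bounds on the radial $K_{0i}$ by integrating $\frac{d}{dt}\hat P_\pm\lesssim \check a^{-4}\lesssim (T-t)^{-2}$ in time, which is where the already-proved lower bound on $\check a$ is consumed.

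Your route through $Q=a^2|\text{Rm}|$ faces a concrete obstruction. The reaction term in $\partial_t|\text{Rm}|$ is quadratic in $|\text{Rm}|$, so $(\partial_t-\Delta)Q$ contains a term of size $a^2|\text{Rm}|^2=Q^2/a^2$, which is not controlled by $Q$ alone; closing this into an ODE for $Q$ requires exactly a lower bound on $a$ of the type you intended to derive afterwards from the curvature estimate. In other words the dependency is circular. The scalar-curvature hypothesis $\min_s S\cdot\max_s c^2>-3$ is not, as you suggest, used to tame the reaction term in the curvature evolution; it is used earlier, to produce the lower bound $\check a^2\gtrsim T-t$, and that lower bound is what ultimately makes the curvature estimate go through.
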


Theorems \ref{compare} and \ref{theorem1} are generalisations of Theorems \ref{IKS1} and \ref{IKS2}, which were proven in \cite{IKS}. In our case, we assume that the metric quantity $b$ is not identically equal to $c$. The main difficulty in proving our theorems is that we have to control three quantities instead of just two. In some of the equations that we are going to study, these quantities and their derivatives might be coupled. These will be dealt by utilising Young's inequality to separate the various terms. Furthermore, in order to deal with the resulting inequalities, we have to impose the extra condition such that $a \leq c < 2a$ at initial time, which was not present in Theorem \ref{IKS2}.

\section{Proof of Theorem 1.1}

From Appendix A and the work done by Isenberg, Knopf, and \v Se\v sum, we know that the Ricci flow equation from the metrics on $S^1 \times S^3$ with initial metric of the form (\ref{triax}) would preserve the symmetry and form of the metric. This is true since the metric quantities evolve according to the semilinear parabolic system of equations:
\begin{align*} 
\partial_t a &= a''+a'\left(\frac{b'}{b}+\frac{c'}{c} \right) - 2a\left(\frac{a^4-(b^2-c^2)^2}{(abc)^2}\right),
\\ \partial_t b &= b''+b'\left(\frac{a'}{a}+\frac{c'}{c} \right) - 2b\left(\frac{b^4-(a^2-c^2)^2}{(abc)^2}\right), 
\\ \partial_t c &= c''+c'\left(\frac{a'}{a}+\frac{b'}{b} \right) - 2c\left(\frac{c^4-(a^2-b^2)^2}{(abc)^2}\right), 
\end{align*}
along with the commutator relation $ [\partial_t,\partial_s]=-\left( \frac{a''}{a}+\frac{b''}{b}+\frac{c''}{c} \right) \partial_s$. We now prove that the initial ordering of the metric components is also preserved under the Ricci flow.

\begin{mylem} \cite{IKS} \label{order} If $0<a(z,0) \leq b(z,0) \leq c(z,0)$, then under the Ricci flow this ordering is preserved for all $t >0$ for which the flow exists. \hfill $\blacksquare$
\end{mylem}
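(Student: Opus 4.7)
The plan is to reduce each ordering inequality to a scalar linear parabolic equation on $S^1$ and then invoke the weak maximum principle. I introduce the logarithmic ratios $u := \log(c/b)$ and $v := \log(b/a)$, which are smooth as long as the Ricci flow exists with $a,b,c>0$. The conclusion $a \leq b \leq c$ is equivalent to $u \geq 0$ and $v \geq 0$, so it suffices to show that these sign conditions propagate from the initial data.

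First I would compute $\partial_t u = \partial_t c/c - \partial_t b/b$ directly from the evolution system. Two structural features make the right-hand side linear in $u$. (i) The mixed first-order terms arrange so that the transport direction is $\tfrac{a'}{a} + \tfrac{b'}{b} + \tfrac{c'}{c}$, once one uses $c''/c - b''/b = u'' + u'\bigl(\tfrac{b'}{b} + \tfrac{c'}{c}\bigr)$ together with the cancellation of the cross term $(b'/b)(c'/c)$ between the two quotients. (ii) The reaction difference factors as
\[
c^4 - b^4 + (a^2-c^2)^2 - (a^2-b^2)^2 \;=\; 2(c^2-b^2)(b^2+c^2-a^2),
\]
using the identity $(a^2-c^2)^2 - (a^2-b^2)^2 = (b^2-c^2)(2a^2-b^2-c^2)$. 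Writing
\[
c^2 - b^2 = b^2(e^{2u}-1) = 2b^2 \, u \int_0^1 e^{2su}\,ds
\]
extracts the factor $u$ explicitly, producing a genuinely linear equation
\[
\partial_t u = u'' + \Bigl(\tfrac{a'}{a} + \tfrac{b'}{b} + \tfrac{c'}{c}\Bigr) u' + R_u\, u,
\]
with smooth coefficients that are bounded on $S^1 \times [0,T']$ for any $T' < T$.

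The derivation for $v$ is structurally identical, now using the companion factorisations $(b^2-c^2)^2 - (a^2-c^2)^2 = (b^2-a^2)(a^2+b^2-2c^2)$ and $b^4 - a^4 = (b^2-a^2)(a^2+b^2)$, together with $b^2 - a^2 = a^2(e^{2v}-1)$. The resulting equation for $v$ has the same transport coefficient and a smooth bounded zero-order coefficient $R_v$. Notice that the signs of $R_u$ and $R_v$ are irrelevant: the standard comparison trick of replacing the unknown by $u + \varepsilon e^{\mu t}$ with $\mu$ larger than $\sup R_u$ yields the scalar maximum principle on the boundaryless manifold $S^1$, forcing $u \geq 0$; an analogous argument gives $v \geq 0$. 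Thus $a \leq b \leq c$ is preserved throughout the lifespan of the flow.

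The main obstacle is the purely algebraic bookkeeping required to recognise that the reaction differences factor cleanly through $c^2 - b^2$ (respectively $b^2 - a^2$); once those factorisations are identified, the analytic content is entirely routine. A secondary point is to verify that the coefficients of the linear equations remain bounded on any subinterval $[0,T']$ strictly inside the maximal existence interval, which follows automatically from the smoothness of the flow (and the fact that $a,b,c$ are bounded above and away from zero on such a subinterval).
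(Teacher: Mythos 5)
Your proof is correct and essentially mirrors the paper's strategy: reduce each ordering inequality to a scalar linear parabolic equation on the compact manifold $S^1$ for a suitable ratio quantity, then invoke the weak maximum principle. The paper works directly with the affine ratios $\frac{a-b}{a}$ and $\frac{b-c}{b}$, whose evolution equations in Appendix A already have the unknown factored out of the reaction term, whereas your logarithmic variables need the small intermediate step $c^2-b^2=b^2(e^{2u}-1)$ to extract the linear factor; the analytic content is otherwise identical.
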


\begin{proof}
Consider the parabolic PDEs satisfied by the quantities $\frac{\xi}{a}=\frac{a-b}{a}$ and $\frac{\zeta}{b}=\frac{b-c}{b}$. They evolve according to the equations (\ref{ecc1}) and (\ref{ecc3}). Thus, the maximum principles hold here where the parabolic boundary $\mathscr{P}S^1_T=S^1 \times \{0\}$ is simply the base $S^1$ at initial time. Note that $a \leq b \leq c$ initially. By applying Theorem \ref{maximumprinciple3} to the equation for $\frac{\xi}{a}$ and Theorem \ref{maximumprinciple1} to the equation for $\frac{\zeta}{b}$, we deduce that $\frac{\xi}{a} \leq 0$ and $\frac{\zeta}{b} \leq 0$ in $S^1_T$ and hence we have $0<a \leq b \leq c$ for all $(z,t) \in S^1_T$.
\end{proof}

Furthermore, by using Theorem \ref{maximumprinciple1} on equations (\ref{ecc3})-(\ref{ecc6}), in the same vein as the proof by Isenberg, Knopf, and \v Se\v sum, we can deduce the eccentricity result in Theorem \ref{compare}.

\begin{proof}[Proof of Theorem \ref{compare}]
We consider equations (\ref{ecc3}) and (\ref{ecc4}). Since $0 < a \leq c$ for all time $t \in [0,T)$, we can deduce the following inequalities: 
\begin{align*}
-4\left(\frac{1}{a^2}-\frac{1}{c^2}+\frac{b^2}{a^2c^2}+\frac{b^2+c^2-a^2}{a^2bc}\right)=-4(b^2+c^2-a^2)\left(\frac{1}{a^2c^2}+\frac{1}{a^2bc} \right) &\leq 0,
\\ -4\left(\frac{1}{a^2}-\frac{1}{b^2}+\frac{c^2}{a^2b^2}+\frac{b^2+c^2-a^2}{a^2bc}\right)=-4(b^2+c^2-a^2)\left(\frac{1}{a^2b^2}+\frac{1}{a^2bc} \right) & \leq 0.
\end{align*}

By applying Theorem \ref{maximumprinciple1}, there exist some constants $C_1,C_2 > 0$ which depend only on the data on $\mathscr{P}S^1_T$ such that  $|\frac{\zeta}{c}|\leq C_1$ and $|\frac{\zeta}{b}|\leq C_2$. If we choose $\tilde{C}_0=\max(C_1,C_2)$, then $|b-c| \leq \tilde{C} \min(b,c)$ for all $(z,t) \in S^1_T$. Similarly, by applying Theorem \ref{maximumprinciple1} to equations (\ref{ecc5}) and (\ref{ecc6}) as well as noting that $c \geq b$, there exists a constant $\hat{C}_0 >0$ such that $|a-c| \leq \hat{C}_0 \min(a,c)$ for all $(z,t) \in S^1_T$.
\end{proof}

In fact, these equations tell us more:

\begin{mylem} Suppose that at time $t=0$ we have $0 < a$ and $ 1 \leq \displaystyle \max_s \textstyle \left(\frac{c}{a} \right) \leq \lambda $ for some constant $\lambda \geq 1$. Then $ 1 \leq \frac{c}{a} \leq\lambda$ for all $(z,t) \in S^1_T$.  \label{moremax1} \hfill $\blacksquare$
\end{mylem}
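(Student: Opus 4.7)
The plan is to apply a scalar maximum principle to the ratio $q := c/a$. The lower bound $q \geq 1$ is immediate from Lemma~\ref{order}, which ensures the ordering $a \leq b \leq c$ is preserved by the flow.

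For the upper bound I would work with the parabolic PDE satisfied by $q$, which is equivalent to equation~(\ref{ecc6}) for $(c-a)/a = q-1$ already invoked in the proof of Theorem~\ref{compare}. A direct calculation from the evolution equations for $a$ and $c$, using $q_t = c_t/a - q\,a_t/a$ together with $c'' = a q'' + 2 a' q' + q a''$ and $c'/c - a'/a = q'/q$, yields
\begin{equation*}
q_t \;=\; q'' + q'\!\left(\frac{2a'}{a} + \frac{b'}{b}\right) + 2q\,(\mathcal{A}-\mathcal{C}),
\end{equation*}
where $\mathcal{A} = (a^4-(b^2-c^2)^2)/(abc)^2$ and $\mathcal{C} = (c^4-(a^2-b^2)^2)/(abc)^2$ are the zeroth-order factors appearing in the evolution of $a$ and $c$.

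The heart of the argument is the algebraic identity
\begin{equation*}
\mathcal{A}-\mathcal{C} \;=\; \frac{2(a^2-c^2)(a^2+c^2-b^2)}{(abc)^2},
\end{equation*}
obtained from $a^4-c^4 = (a^2-c^2)(a^2+c^2)$ and $(a^2-b^2)^2 - (b^2-c^2)^2 = (a^2-c^2)(a^2+c^2-2b^2)$. By Lemma~\ref{order} we have $a \leq b \leq c$, so $a^2-c^2 \leq 0$ while $a^2+c^2-b^2 \geq c^2-b^2 \geq 0$, forcing $\mathcal{A}-\mathcal{C} \leq 0$. Hence the reaction coefficient $2(\mathcal{A}-\mathcal{C})$ in the PDE for $q$ is non-positive, and Theorem~\ref{maximumprinciple1} gives that $\max_{z} q(z,t)$ is non-increasing in $t$. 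Combined with the hypothesis $\max_{z,\,t=0} q \leq \lambda$, this delivers $1 \leq c/a \leq \lambda$ on all of $S^1_T$.

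The main obstacle is really the algebraic step of factoring $\mathcal{A}-\mathcal{C}$ cleanly: without that factorisation the sign of the reaction term is not apparent. Once the identity is in hand, the preserved ordering $b \leq c$ is exactly what makes the second factor non-negative, and the maximum principle closes the argument in a single line.
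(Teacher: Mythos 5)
Your proposal is correct and follows essentially the same route as the paper: both apply a scalar maximum principle to $q=c/a$ and hinge on the observation that the ordering $a\leq b\leq c$ forces the reaction term to be non-positive. The paper invokes the precomputed equation~(\ref{ecc5}) for $\chi/a=(a-c)/a$ rather than rederiving the PDE, so the right reference is (\ref{ecc5}), not (\ref{ecc6}); your explicit factorisation $\mathcal{A}-\mathcal{C}=2(a^2-c^2)(a^2+c^2-b^2)/(abc)^2$ is simply the same sign observation made transparent, and your independent derivation of $q_t = q'' + q'(2a'/a + b'/b) + 2q(\mathcal{A}-\mathcal{C})$ checks out against the appendix.
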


\begin{proof}
Since $\frac{\chi}{a}=\frac{a-c}{a}=1-\frac{c}{a}$, the evolution equation for $\frac{c}{a}$ is obtained from equation (\ref{ecc5}) as:
\begin{align*}\partial_t \left(\frac{c}{a} \right)=-\partial_t \left(\frac{\chi}{a} \right)&=-\left(\frac{\chi}{a} \right)''-\left(2\frac{a'}{a}+\frac{b'}{b}\right)\left(\frac{\chi}{a} \right)'+4(a^2+c^2-b^2)\left(\frac{1}{a^2b^2}+\frac{1}{ab^2c}\right)\left(\frac{\chi}{a} \right) 
\\ &= \left(\frac{c}{a} \right)''+\left(2\frac{a'}{a}+\frac{c'}{c}\right)\left(\frac{c}{a} \right)'+4(a^2+c^2-b^2)\left(\frac{1}{a^2b^2}+\frac{1}{ab^2c}\right)\left(1-\frac{c}{a} \right)
\\ &\leq \left(\frac{c}{a} \right)''+\left(2\frac{a'}{a}+\frac{c'}{c}\right)\left(\frac{c}{a} \right)',
\end{align*}
since $1-\frac{c}{a} \leq 0$. By applying Theorem \ref{maximumprinciple1}, we conclude that $ \frac{c}{a} \leq \displaystyle{\max_{s,t=0}} \textstyle \left(\frac{c}{a} \right) =\lambda$.
\end{proof}

\section{Proof of Theorem 1.2}
In this section, we now aim to prove Theorem 1.2. The strategy to prove Theorem 1.2 is to prove several lemmas which bound the metric quantities and trace the spatial minimum of the smallest metric quantity as time progresses. We then show that this spatial minima of the smallest metric quantity can be bounded from above and below by linear functions. Finally, by computing the norm of the Riemann curvature tensor explicitly and using the bound obtained above, we conclude that the singularity that develops during the flow is of Type I. 

Since we have the ordering $a(z,t) \leq b(z,t) \leq c(z,t)$ at all points $(z,t) \in S^1_T$, we wish to study the behaviour of the minima of the function $a$. Intuitively, one can think of the quantity $a$ as the smallest ``radius'' of the $\text{SU}(2)$ fibre at $z$. We define the quantity $\check{a}$ that will be used in our study for the neckpinching of the manifold by:
\begin{align*}\check{a}(t)&=\min_s(a(s,t)).
\end{align*}

Necessarily, we have $\check{a}(t)  >0$ as long as the Ricci flow exists and at this point, $a'(s,t)=0$ and $a''(s,t) \geq 0$. By picturing a neckpinch, we expect the quantity $a$ would vanish at the singular time; that is, $\check{a}(T)=0$. Furthermore, the spatial global minima, denoted by $\check{a}(t)$, is a Lipschitz continuous function of time \cite{HR4}. 

\begin{mylem} If there exists a $T < \infty$ such that $\check{a}(T)=0$, then $\check{a}(t)^2 \leq 4(T-t)$. \label{tracecritical} \hfill $\blacksquare$
\end{mylem}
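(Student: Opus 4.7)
The plan is to establish the differential inequality $\tfrac{d}{dt}(\check{a}^2) \geq -4$ in an almost-everywhere or Dini sense along the flow, and then integrate it backwards from the singular time $T$ using the hypothesis $\check{a}(T)=0$ to conclude $\check{a}(t)^2 \leq 4(T-t)$.

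To obtain such an inequality, I would apply the usual Hamilton trick at a spatial minimum. Fix $t\in[0,T)$ and pick $s^*=s^*(t)$ with $\check{a}(t)=a(s^*,t)$; first-order calculus gives $a'(s^*,t)=0$ and $a''(s^*,t)\geq 0$. Rewriting the reaction term of the evolution equation for $a$ as $-\tfrac{2a^3}{b^2c^2}+\tfrac{2(b^2-c^2)^2}{a b^2 c^2}$, substituting $a'=0$, and discarding the two nonnegative summands $a''(s^*,t)$ and $\tfrac{2(b^2-c^2)^2}{a b^2 c^2}$ on the right-hand side produces
\begin{align*}
\partial_t a(s^*,t)\;\geq\;-\frac{2\,a(s^*,t)^3}{b(s^*,t)^2\, c(s^*,t)^2}.
\end{align*}
The ordering $a\leq b\leq c$ supplied by Lemma \ref{order} gives $b^{2}c^{2}\geq a^{4}$ pointwise, so the right-hand side is in turn bounded below by $-2/a(s^*,t) = -2/\check{a}(t)$.

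Since $\check{a}$ is Lipschitz in $t$ (as noted just before the statement), a standard Danskin-type argument, using any minimizer $s^{*}_{h}$ of $a(\cdot,t+h)$ together with the inequality $\check{a}(t)\leq a(s^{*}_{h},t)$, then lifts this pointwise bound at minima to the lower Dini estimate $D_{+}\check{a}(t)\geq -2/\check{a}(t)$, equivalently $\tfrac{d}{dt}(\check{a}^{2})\geq -4$ almost everywhere. Integrating over $[t,T]$ and invoking $\check{a}(T)=0$ yields $\check{a}(t)^{2}\leq 4(T-t)$. The only delicate step is this last promotion: the spatial minimum set of $a(\cdot,t)$ need not be a singleton and its elements may evolve with $t$, so one must route through the one-sided Dini derivative rather than a classical $\tfrac{d}{dt}$; apart from that, the proof is purely algebraic and is driven entirely by the ordering $a\leq b\leq c$ and the nonnegativity of $a''$ at a spatial minimum.
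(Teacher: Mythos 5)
Your proposal is correct and takes essentially the same approach as the paper: both apply the Hamilton trick at a spatial minimum, drop the nonnegative terms $a''$ and $\tfrac{2(b^2-c^2)^2}{ab^2c^2}$, use the ordering $a\leq b\leq c$ to bound $b^2c^2\geq a^4$, and integrate $\tfrac{d}{dt}(\check{a}^2)\geq -4$ from $t$ to $T$. You are slightly more explicit than the paper in routing through the Dini derivative, but the argument is the same.
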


\begin{proof}  We use the original coordinate $z \in [0,2\pi)$. Since the global spatial minima of the quantity $a$ is a Lipschitz continuous function of time, we may define the derivative $\frac{d }{dt}\check{a}(t)$ in the sense of forward difference quotients as described in \cite{HR4}. Defining $z(t)$ to be a position of the global spatial minima of $a$ at time $t$, we have:
\begin{align}
\frac{d }{dt}\check{a}(t)&=\partial_t a(z(t),t)=  a''+a'\left(\frac{b'}{b}+\frac{c'}{c} \right) - 2a\left(\frac{a^4-(b^2-c^2)^2}{(abc)^2}\right) \nonumber
\\ &\geq -2a\left(\frac{a^4-(b^2-c^2)^2}{(abc)^2}\right) \geq -\frac{2a^3}{b^2c^2}=-\frac{2}{a}\frac{a^2}{b^2}\frac{a^2}{c^2}. \label{acheck2}
\end{align}

Next, by the ordering $a \leq b \leq c$ we have $\frac{a^2}{b^2}\frac{a^2}{c^2} \leq 1$. Along with the inequality (\ref{acheck2}), we deduce:
\begin{align*}
\frac{d}{dt}\check{a} \geq -\frac{2}{\check{a}}\quad \Rightarrow \quad \frac{d}{dt} (\check{a}^2) \geq -4.
\end{align*}

 By integrating the differential inequality from $t$ to $T$ and using the fact that $\check{a}(T)=0$, we have  $\check{a}(t)^2 \leq 4(T-t)$ for almost every $t \in [0,T)$.
\end{proof} 

The lemma above gives us a corollary:

\begin{mycor} The singular time $T$ for the Ricci flow is bounded below by $T \geq \frac{\check{a}(0)^2}{4}$. \hfill $\blacksquare$
\end{mycor}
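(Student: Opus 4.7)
The plan is to deduce this corollary essentially for free from Lemma \ref{tracecritical}, so the proof should be very short and consists of just evaluating the inequality at the initial time. First I would observe that under the hypothesis that a singular time $T<\infty$ exists and corresponds to a neckpinch (which is the setting we care about for Theorem \ref{theorem1}), Lemma \ref{tracecritical} gives $\check{a}(t)^2 \leq 4(T-t)$ for almost every $t \in [0,T)$. Setting $t=0$ and using continuity of $\check{a}$ at $t=0$ yields $\check{a}(0)^2 \leq 4T$, which rearranges to the desired bound $T \geq \check{a}(0)^2/4$.

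Alternatively, and perhaps more cleanly because it avoids the a.e.\ subtlety, I would integrate the differential inequality $\frac{d}{dt}(\check{a}^2) \geq -4$ (derived in the proof of Lemma \ref{tracecritical}) directly from $0$ to $t$. Since $\check{a}^2$ is locally Lipschitz in $t$ (as the minimum of the Lipschitz-in-time function $a$), this integration is justified in the sense of forward difference quotients, and gives $\check{a}(t)^2 \geq \check{a}(0)^2 - 4t$ for all $t$ in the maximal interval of existence. Because the Ricci flow metric degenerates as soon as $\check{a}$ reaches zero, the solution certainly persists as long as $\check{a}(0)^2 - 4t > 0$, i.e.\ for all $t < \check{a}(0)^2/4$, so $T \geq \check{a}(0)^2/4$.

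There is really no obstacle here beyond the mild bookkeeping about differentiating a minimum of a smooth family in $t$; this has already been handled in Lemma \ref{tracecritical} by invoking the forward difference quotient framework of \cite{HR4}, so I would simply cite that and conclude.
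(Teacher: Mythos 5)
Your proposal is correct and follows the same route the paper implicitly takes: the corollary is an immediate consequence of Lemma \ref{tracecritical} obtained by setting $t=0$ in the inequality $\check{a}(t)^2 \leq 4(T-t)$. Your alternative phrasing via integrating $\frac{d}{dt}(\check{a}^2)\geq -4$ forward from $0$ is an equivalent reading of the same estimate and adds nothing new to the argument.
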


Apart from the upper bound for $\check{a}(t)$, we can also find a similar bound for $\displaystyle \hat{c}(t)=\max_s(c(s,t))$. Similar as before, this quantity is a Lipschitz continuous function of time \cite{HR4}. From equation (\ref{riccipde3}), along the path for the maximum of $c$ we have:
\begin{equation}\frac{d}{dt} \hat{c} \leq  - 2c\left(\frac{c^4-(a^2-b^2)^2}{(abc)^2}\right)=-\frac{2}{c}\left(\frac{(c^4-b^4)+a^2(2b^2-a^2)}{(ab)^2}\right)\leq 0, \label{bdcmax} \end{equation}
which implies that the maximum of $c$ over $S^1$ decreases over time. In fact, we have the following estimate:

\begin{mylem} The maximum of $c$ over $S^1$ is bounded above by $\hat{c}(t)^2 \leq -4t +\hat{c}(0)^2$. \label{cmax} \hfill $\blacksquare$
\end{mylem}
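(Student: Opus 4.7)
The plan is to upgrade the pointwise inequality $\frac{d}{dt}\hat{c} \leq -\frac{2}{c}\bigl((c^4-b^4)+a^2(2b^2-a^2)\bigr)/(ab)^2$ just displayed in (\ref{bdcmax}) to the ODE inequality $\frac{d}{dt}(\hat{c}^{\,2}) \leq -4$, and then integrate. Because $\hat{c}(t)$ is only Lipschitz in $t$, I would interpret $\tfrac{d}{dt}\hat{c}$ via forward difference quotients along a path $z(t)$ realising the spatial maximum, exactly as was done for $\check{a}(t)$ in the proof of Lemma \ref{tracecritical}; at such a point $c'=0$ and $c''\leq 0$, so the reaction term in (\ref{riccipde3}) controls everything.

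The key estimate is to show that the bracketed quantity
\[
\frac{(c^4-b^4)+a^2(2b^2-a^2)}{(ab)^2}
\]
is bounded below by $1$ whenever the ordering $a\leq b\leq c$ holds (which is preserved by Lemma \ref{order}). First, $c^4-b^4\geq 0$ since $c\geq b$. Second, since $b\geq a$ we have $2b^2-a^2\geq b^2$, so $a^2(2b^2-a^2)\geq a^2 b^2$. Adding the two bounds gives a numerator of at least $a^2b^2=(ab)^2$, as required.

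Combining this with (\ref{bdcmax}) yields $\frac{d}{dt}\hat{c}\leq -\frac{2}{\hat{c}}$ in the forward-difference-quotient sense, hence
\[
\frac{d}{dt}\bigl(\hat{c}^{\,2}\bigr) \;=\; 2\hat{c}\,\frac{d}{dt}\hat{c} \;\leq\; -4,
\]
and integrating from $0$ to $t$ gives the stated bound $\hat{c}(t)^2\leq \hat{c}(0)^2-4t$.

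There is no real obstacle here: the work was already done in deriving (\ref{bdcmax}); the only thing to check is the elementary algebraic inequality above, and the now-standard justification that the max of a smooth family of functions is Lipschitz and satisfies the correct ODI via the forward difference quotient along a point of maximum (already invoked for $\check{a}$ and referenced to \cite{HR4}). A minor remark worth adding is that this also forces the singular time to satisfy $T\leq \hat{c}(0)^2/4$, complementing the lower bound $T\geq \check{a}(0)^2/4$ from the preceding corollary.
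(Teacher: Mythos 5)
Your proof is correct, and the reduction is notably cleaner than the argument in the paper. Both of us aim to show that the quantity
\[
\frac{(c^4-b^4)+a^2(2b^2-a^2)}{(ab)^2}
\]
is bounded below by $1$ under the ordering $a\leq b\leq c$, so that $\tfrac{d}{dt}\hat{c}\leq -2/\hat{c}$ and hence $\tfrac{d}{dt}(\hat{c}^{\,2})\leq -4$. The paper gets there by rewriting the quantity as $P(a/c,b/c)$ with $P(x,y)=2+\tfrac{1}{x^2y^2}-\tfrac{x^2}{y^2}-\tfrac{y^2}{x^2}$, observing that $\mathrm{grad}\,P$ never vanishes, and then minimising $P$ over the boundary of the triangular region $\Pi_\lambda$ (checking three boundary arcs and finding the minimum $P(1,1)=1$). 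Your argument instead splits the numerator directly: $c^4-b^4\geq 0$ because $c\geq b$, and $a^2(2b^2-a^2)\geq a^2b^2$ because $2b^2-a^2\geq b^2$ when $b\geq a$; the sum is then at least $(ab)^2$ and the bound follows. This is purely algebraic, shorter, and does not require invoking Lemma \ref{moremax1} or introducing the auxiliary parameter $\lambda$ at all, whereas the paper's proof does so unnecessarily (the lemma as stated carries no $\lambda$ hypothesis, and the region-minimisation argument in fact gives $P\geq1$ over the whole set $\{0<x\leq y\leq1\}$). The paper's heavier two-variable machinery is a template it reuses later (in Proposition \ref{lowerboundofa}, where no such clean factorisation exists), which is probably why it was deployed here too. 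One tiny technical point: since $\hat{c}$ is only Lipschitz, it is safer to state the chain-rule step as the one-sided inequality $\tfrac{d}{dt}(\hat{c}^{\,2})\leq 2\hat{c}\,\tfrac{d}{dt}\hat{c}$ in the forward-difference sense rather than as an equality, but this does not affect the conclusion.
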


\begin{proof}
From inequality (\ref{bdcmax}), we study the function $P(x,y)=2+\frac{1}{x^2y^2}-\frac{x^2}{y^2}-\frac{y^2}{x^2}$ since the left hand side of (\ref{bdcmax}) is $-\frac{2}{c}P\left(\frac{a}{c},\frac{b}{c}\right)$. Let $c \leq \lambda a$ at the initial time. Thus, by Lemmas \ref{order} and \ref{moremax1}, we have the ordering $a \leq b \leq c \leq \lambda a$ at all time. Thus, it is sufficient to restrict the domain of the function $P(x,y)$ to $x \in [\lambda^{-1},y]$ and $y \in [x,1]$. We call this domain $\Pi_\lambda \subset \mathbb{R}^2$.  

\begin{figure}[H]\centering \captionsetup{justification=centering}
\begin{tikzpicture}[thick]
      \draw[->] (-0.5,0) -- (3.5,0) node[right] {$x$};
      \draw[->] (0,-0.5) -- (0,3.8) node[above] {$y$};
      \draw[white,fill=black!10] (0.5,0.5) -- (3,3) -- (0.5,3) -- cycle;
      \draw[->-] (3,3) -- (0.5,0.5);
      \draw[->-] (0.5,3) -- (0.5,0.5);
      \draw[->-] (3,3) -- (0.5,3);
      \draw[-] (-0.1,3) -- (0.1,3);
      \node[left] at (0,3) {$1$};
      \draw[-] (-0.1,0.5) -- (0.1,0.5);
      \node[left] at (0,0.5) {$\frac{1}{\lambda}$};
      \draw[-] (0.5,-0.1) -- (0.5,0.1);
      \node[below] at (0.5,0) {$\frac{1}{\lambda}$};
      \node[below, rotate=45] at (1.85,1.6) {$y=x$};
      \node[] at (1.3,2.2) {$\Pi_\lambda$};
    \end{tikzpicture}  
\caption[Domain $\Pi_\lambda$ of the function $P(x,y)$.]{Domain $\Pi_\lambda$ of the function $P(x,y)$. \\ The arrows indicate the direction $P(x,y)$ increases.} \label{pilambda}
\end{figure}
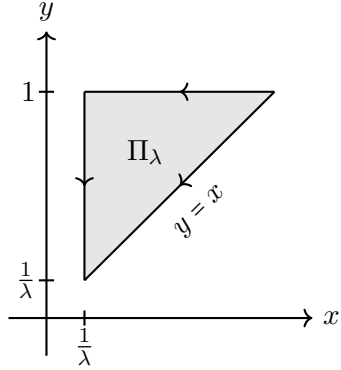

Our aim now is to minimise the function $P$ over the domain $\Pi_\lambda$. Since $\text{grad}(P)=\frac{2}{x^2y^2}(\frac{y^4-x^4-1}{x},\frac{x^2-y^2-1}{y})$, we conclude that the there are no global critical points of $P$ over $\mathbb{R}^2$. Necessarily, the minimum of $P(x,y)$ on $\Pi_\lambda$ lies on $\partial \Pi_\lambda$. Along the boundary of $\Pi_\lambda$ we have:
\begin{align*}
P(x,x)&=\frac{1}{x^4},
\\ P(x,1) &= 2-x^2,
\\ P(\lambda^{-1},y) &= \frac{\lambda^2}{y^2}-\frac{1}{\lambda^2y^2}-\lambda^2y^2+2.
\end{align*}

We note that the functions $P(x,x)$ and $P(x,1)$ are strictly decreasing as $x$ increases from $\lambda^{-1}$ to $1$. Furthermore, since $\lambda \geq 1$, $\partial_yP(\lambda^{-1},y)=\frac{2}{\lambda^2y^3}(1-\lambda^4y^4-\lambda^4)<0$ for any $y \in [\lambda^{-1},1]$. So $P(\lambda^{-1},y)$ is a decreasing function of $y$. Thus, the minimum of $P(x,y)$ on $\Pi_\lambda$ is at the point $(1,1)$ which gives us $P(x,y) \geq P(1,1)=1$. By using this in inequality (\ref{bdcmax}), we get:
$$\frac{d}{dt}\hat{c} \leq -\frac{2}{\hat{c}}\quad \Rightarrow \quad \frac{d}{dt} (\hat{c}^2) \leq -4,$$
for all but a finite number of $t \in [0,T)$. By integrating the differential inequality from $0$ to $t$, we have $\hat{c}(t)^2 -\hat{c}(0)^2 \leq -4t$
for almost every $t \in [0,T)$, hence the result.
\end{proof}

This inequality is sharp and is attained when $a=b=c$; that is, it is attained by the shrinking sphere. The lemma above gives us another corollary:

\begin{mycor} The singular time $T$ for the Ricci flow is bounded above by $T \leq \frac{\hat{c}(0)^2}{4}$. \hfill $\blacksquare$
\end{mycor}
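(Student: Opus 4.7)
The plan is to derive this corollary as an immediate consequence of Lemma \ref{cmax}. That lemma gives the pointwise-in-time differential inequality $\hat{c}(t)^{2} \leq \hat{c}(0)^{2} - 4t$, valid for every $t \in [0,T)$. Since $\hat{c}(t) = \max_{s} c(s,t)$ and $c$ is a positive metric coefficient wherever the Ricci flow continues to be a smooth non-degenerate solution, we must have $\hat{c}(t) > 0$ on $[0,T)$. Combining these two facts, $0 < \hat{c}(t)^{2} \leq \hat{c}(0)^{2} - 4t$, forces $t < \hat{c}(0)^{2}/4$ for every $t$ in the interval of existence, so taking the supremum yields $T \leq \hat{c}(0)^{2}/4$.

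The only (very minor) conceptual step is justifying why $\hat{c}(t) > 0$ on $[0,T)$: by Lemma \ref{order} the ordering $0 < a \leq b \leq c$ is preserved, so $c$ remains a strictly positive smooth function on $S^{1}$ for as long as the flow exists, and thus its spatial maximum $\hat{c}(t)$ is strictly positive on $[0,T)$. Equivalently, one can argue by contradiction: if $T > \hat{c}(0)^{2}/4$, evaluating the inequality of Lemma \ref{cmax} at any $t_{0} \in (\hat{c}(0)^{2}/4, T)$ gives $\hat{c}(t_{0})^{2} < 0$, which is absurd. No further estimates or computations are needed, and there is no real obstacle here — the work was already done in Lemma \ref{cmax}; this corollary is simply the dual statement to the lower bound $T \geq \check{a}(0)^{2}/4$ obtained earlier from the analogous estimate on $\check{a}$.
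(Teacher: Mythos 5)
Your proposal is correct and matches the paper's intent: the corollary is stated immediately after Lemma \ref{cmax} precisely as the consequence you describe, combining $\hat{c}(t)^2 \leq \hat{c}(0)^2 - 4t$ with the positivity of $\hat{c}(t)$ on $[0,T)$ to force $T \leq \hat{c}(0)^2/4$. The paper omits the one-line justification of positivity via Lemma \ref{order}, which you supply, but the route is the same.
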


A consequence of the lemma above is a refinement of Lemma \ref{moremax1} using Maximum Principle II in Theorem \ref{maximumprinciple2}. 

\begin{mylem} Suppose that at time $t=0$ we have $1 \leq \displaystyle \max_s \textstyle \left(\frac{c}{a} \right) \leq \lambda $ for some constant $\lambda \geq 1$. Then for all $(z,t) \in S^1_T$ we have the following estimate:
\begin{equation}
1 \leq \frac{c^2}{a^2} \leq e^{(\lambda^2-1)}(\lambda^2-1)\left(1-\frac{4t}{\hat{c}(0)^2} \right)^2+1. \tag*{$\blacksquare$}
\end{equation} 
\end{mylem}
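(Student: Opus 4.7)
The lower bound $1 \leq c^2/a^2$ is immediate from Lemma \ref{order}. For the upper bound, my plan is to derive a reaction--diffusion differential inequality for $f := c^2/a^2 - 1 \geq 0$ and apply Maximum Principle II (Theorem \ref{maximumprinciple2}) with an explicit time-dependent ODE comparison.

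Starting from the evolution equation for $v := c/a$ derived in the proof of Lemma \ref{moremax1}, namely $\partial_t v = v'' + (\text{drift})\, v' + R(1-v)$ with $R = 4(a^2+c^2-b^2)(a+c)/(a^2 b^2 c) \geq 0$, I apply the chain rule to $f = v^2 - 1$ using $f' = 2vv'$, $f'' = 2(v')^2 + 2vv''$, together with the elementary identity $2v(v-1) = (v^2-1) + (v-1)^2 = f + (v-1)^2$. Discarding the nonpositive gradient-square term $-2(v')^2$ then yields
\begin{equation*}
\partial_t f \leq f'' + (\text{drift})\, f' - \tilde R\, f, \qquad \tilde R := \frac{8(a^2+c^2-b^2)}{a^2 b^2},
\end{equation*}
where the algebraic identity $R\bigl(f + (v-1)^2\bigr) = \tilde R f$ (verified by substituting $v = c/a$ and simplifying) upgrades the effective reaction coefficient from $R$ to $\tilde R$. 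This coefficient improvement is essential and is what ultimately drives the squared decay in the final bound.

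Next, the three-way ordering $a \leq b \leq c$ from Lemma \ref{order} gives $a^2+c^2-b^2 \geq a^2$ together with $b \leq c$, so $\tilde R \geq 8/c^2 \geq 8/\hat{c}^2$, and Lemma \ref{cmax} converts this to the pointwise estimate $\tilde R \geq 8/\bigl[\hat{c}(0)^2(1-4t/\hat{c}(0)^2)\bigr]$. The comparison function $\phi(t) := e^{\lambda^2-1}(\lambda^2-1)(1-4t/\hat{c}(0)^2)^2$ solves the ODE $\phi' = -8\phi/\bigl[\hat{c}(0)^2(1-4t/\hat{c}(0)^2)\bigr]$ and satisfies $\phi(0) = e^{\lambda^2-1}(\lambda^2-1) \geq \lambda^2-1 \geq \max_z f(z,0)$ because $\lambda \geq 1$, so Maximum Principle II delivers $f(z,t) \leq \phi(t)$, which is precisely the asserted bound. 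The principal technical obstacle is the coefficient upgrade from $R$ to $\tilde R$: the naive lower bound $R \geq 4/\hat{c}^2$ on the reaction term in the $v$-equation would only furnish a linear-in-$(1-4t/\hat{c}(0)^2)$ decay factor, whereas the grouping of reaction and quadratic remainder that produces $\tilde R \geq 8/\hat{c}^2$ is what doubles the exponent after integrating the ODE; the factor $e^{\lambda^2-1}$ is harmless slack absorbed into the initial data of the comparison ODE to satisfy the hypotheses of Maximum Principle II without requiring the sharpest possible initial value.
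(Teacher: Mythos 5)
Your proof is correct, but it takes a genuinely different (and arguably cleaner) route than the paper's. The paper works directly with $u=c/a$: after bounding the reaction coefficient using $a\le b\le c$, Lemma \ref{cmax}, and $u\ge 1$, it applies Maximum Principle II to the \emph{nonlinear} separable ODE $\frac{dv}{dt}=\frac{4}{\hat c(0)^2-4t}\frac{1-v^2}{v^3}$, $v(0)=\lambda$, obtains the implicit relation $e^{v^2-\lambda^2}\frac{v^2-1}{\lambda^2-1}=\bigl(1-\frac{4t}{\hat c(0)^2}\bigr)^2$, and then extracts the stated bound by the lossy estimate $e^{v^2-\lambda^2}\ge e^{1-\lambda^2}$. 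You instead push the change of variables one step further, to $f=u^2-1$; the crucial observation is the exact algebraic identity $R\bigl(f+(u-1)^2\bigr)=\tilde R f$ with $\tilde R=\frac{8(a^2+c^2-b^2)}{a^2b^2}$, which (after discarding $-2(u')^2\le 0$) turns the reaction into a genuinely \emph{linear} term in $f$ with the doubled coefficient. The comparison ODE is then linear and solves in closed form, giving the $\bigl(1-\frac{4t}{\hat c(0)^2}\bigr)^2$ decay without any implicit-function or Lambert-W gymnastics. In fact your method yields the strictly sharper bound $c^2/a^2\le(\lambda^2-1)\bigl(1-\frac{4t}{\hat c(0)^2}\bigr)^2+1$; the factor $e^{\lambda^2-1}$ in the stated lemma is slack that the paper's method incurs when passing from the implicit solution to an explicit bound, and you simply inflate your initial datum to recover the same displayed constant. (Minor remark: you claim the inflation is needed ``to satisfy the hypotheses of Maximum Principle II,'' but it is not — taking $\phi(0)=\lambda^2-1$ is perfectly valid and gives the sharper estimate; the inflation is only cosmetic.) Both routes rest on the same ingredients (Lemma \ref{order}, Lemma \ref{cmax}, Theorem \ref{maximumprinciple2}), so the difference is purely in the reduction, and yours trades the paper's nonlinear comparison ODE for a linearizing identity.
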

\begin{proof}
Consider the evolution equation for $\frac{c}{a}$ from equation (\ref{ecc5}):
\begin{align*}\partial_t \left(\frac{c}{a} \right)= \left(\frac{c}{a} \right)''+\left(2\frac{a'}{a}+\frac{c'}{c}\right)\left(\frac{c}{a} \right)'+4(a^2+c^2-b^2)\left(\frac{1}{a^2b^2}+\frac{1}{ab^2c}\right)\left(1-\frac{c}{a} \right).
\end{align*}

By defining $u:=\frac{c}{a}$, the equation above can be rewritten as:
\begin{align*}\partial_t u= u''+\left(2\frac{a'}{a}+\frac{c'}{c}\right)u'+\frac{4}{a^2}\left(\frac{a^2}{b^2}-1+\frac{c^2}{b^2}+\frac{a^3}{b^2c}+\frac{ac}{b^2}-\frac{a}{c}\right)(1-u).
\end{align*}
Since $1-u <0$, we want to minimise the coefficient of $1-u$ in the equation above. Using the ordering $a \leq b \leq c$ we have:
\begin{align*}\partial_t u&\leq u''+\left(2\frac{a'}{a}+\frac{c'}{c}\right)u'+\frac{4}{a^2}\left(\frac{u+1}{u^3}\right)(1-u)
\\ & \leq u''+\left(2\frac{a'}{a}+\frac{c'}{c}\right)u'+\frac{4}{c^2}\left(\frac{u+1}{u^3}\right)(1-u)
\\ & \leq u''+\left(2\frac{a'}{a}+\frac{c'}{c}\right)u'+\frac{4}{\hat{c}^2}\left(\frac{u+1}{u^3}\right)(1-u)
\\ & \leq u''+\left(2\frac{a'}{a}+\frac{c'}{c}\right)u'+\frac{4}{\hat{c}(0)^2-4t}\left(\frac{u+1}{u^3}\right)(1-u),
\end{align*}
where we used Lemma \ref{cmax} in the last line. We now appeal to Maximum Principle II by solving the following associated ODE:
\begin{align*} \frac{dv}{dt} &= \frac{4}{\hat{c}(0)^2-4t}\left(\frac{1-v^2}{v^3}\right), 
\\ v(0)&=\lambda.  \end{align*}

This is a separable ODE, which we can solve to get:
\begin{align}v^2-\lambda^2+\log\left(\frac{v^2-1}{\lambda^2-1}\right) =2\log\left(1-\frac{4t}{\hat{c}(0)^2} \right) \quad \Rightarrow \quad e^{v^2-\lambda^2}\left(\frac{v^2-1}{\lambda^2-1}\right)=\left(1-\frac{4t}{\hat{c}(0)^2}\right)^2. \label{solode} \end{align}

Maximum Principle II states that $u$ is bounded above by the function $v$ that satisfies equation (\ref{solode}). We can solve equation (\ref{solode}) for $v$ using the Lambert-W function but this is not expressible in terms of elementary functions. For an explicit bound, since $1 \leq u \leq v$, we have:
\begin{equation*}e^{1-\lambda^2}\left(\frac{v^2-1}{\lambda^2-1}\right) \leq e^{v^2-\lambda^2}\left(\frac{v^2-1}{\lambda^2-1}\right)=\left(1-\frac{4t}{\hat{c}(0)^2}\right)^2, \end{equation*}
from which we can conclude the result  $u^2 \leq v^2  \leq e^{(\lambda^2-1)}(\lambda^2-1)\left(1-\frac{4t}{\hat{c}(0)^2} \right)^2+1$.
\end{proof}

%

\begin{figure}[H]
\captionsetup[subfigure]{justification=centering}
\begin{subfigure}{1\textwidth}\centering
   \includegraphics[scale=0.46]{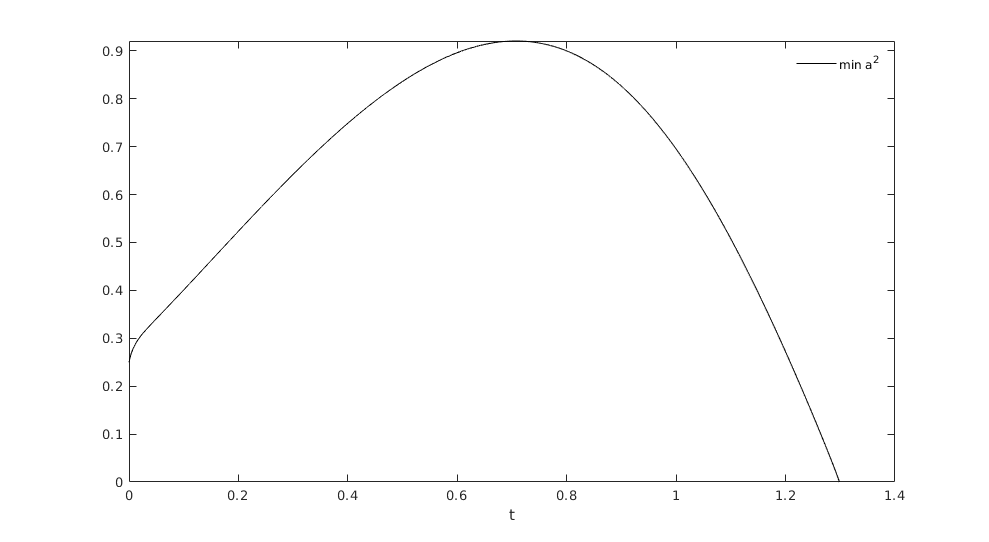}
   \caption{Evolution of $\check{a}^2$ for initial data
    $\phi_0 =1,
a_0 = \cos(z)+1.5,
b_0 = \cos(z)+2.5$, and $
c_0 = \cos(z)+3.5$.}
\end{subfigure}

\begin{subfigure}{1\textwidth}\centering
   \includegraphics[scale=0.46]{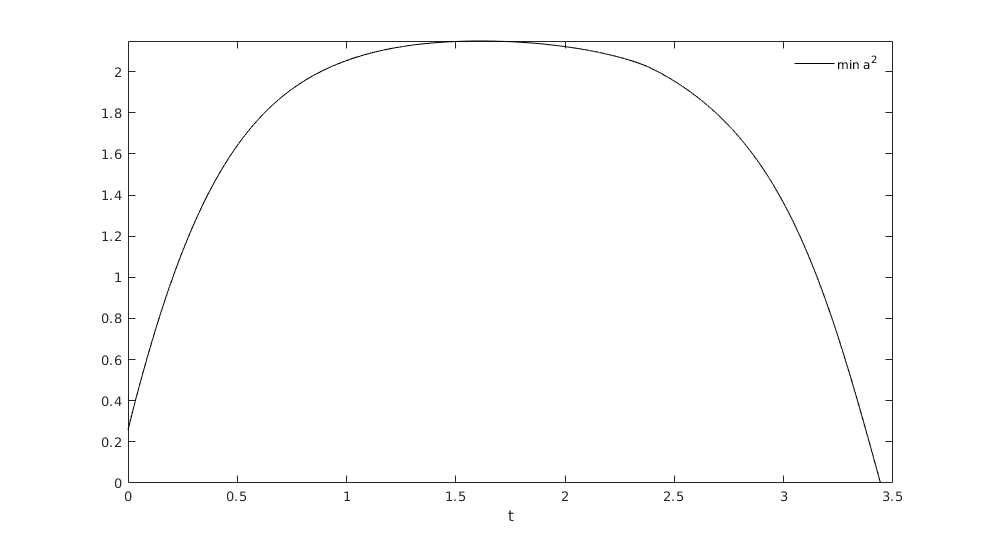}
   \caption{Evolution of $\check{a}^2$ for initial data 
    $\phi_0 =1,
a_0 = \cos(2z)+1.5,
b_0 = \sin(z)+4$, and $
c_0 =6$.}
\end{subfigure}

\begin{subfigure}{1\textwidth}\centering
   \includegraphics[scale=0.46]{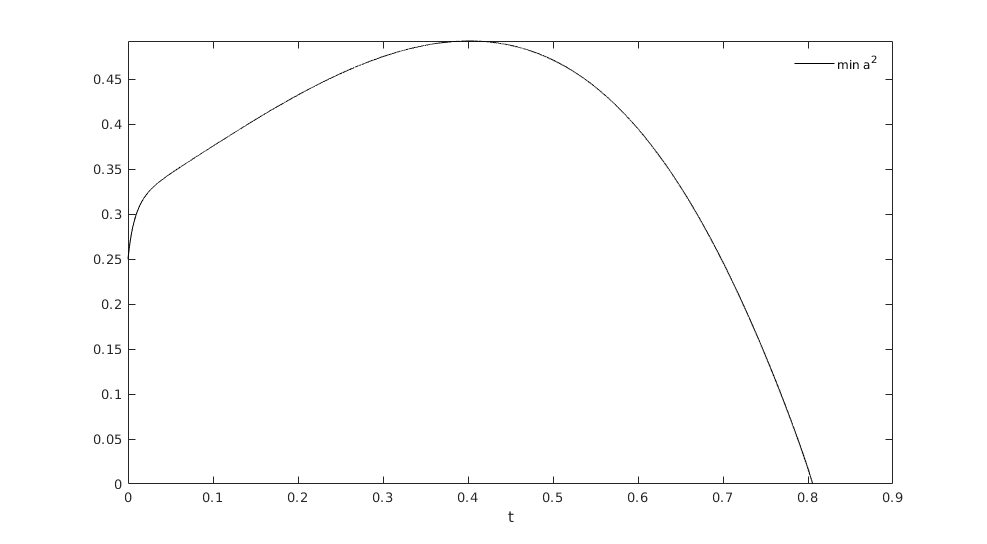}
   \caption{Evolution of $\check{a}^2$ for initial data 
   $\phi_0 =1,
a_0 =\frac{1}{2}\cos(z)+1,
b_0 = \cos(z)+2$, and $
c_0 =2\cos(z)+4$.}
\end{subfigure}
\caption{Numerical plots for the evolution of $\check{a}(t)^2$ for some initial metric.} \label{plot1}
\end{figure}

Now we want to find a lower bound for the quantity $\check{a}(t)$. Numerical simulations in \cite{JMS} provide some evidence that the function $\check{a}(t)^2$ is concave. Some of the plots from \cite{JMS} can be seen in Figure \ref{plot1}. If one can prove that the function $\check{a}(t)^2$ is a concave function of time, then since $\check{a}(0)^2>0$ and $\check{a}(T)^2=0$, we can bound the function $\check{a}(t)^2$ from above and below  by non-negative linear functions with strictly negative gradients such that:
$$D(T-t) \leq \check{a}(t)^2 \leq C(T-t), $$
for some positive constants $C,D>0$.

However, demonstrating the concavity of $\check{a}(t)^2$ by considering its second derivative would be complicated in our setting since the first and second derivatives of the other metric quantities along the minima of $a^2$ would appear in the analysis and we have no control over the sign of these quantities. One way of proceeding with this is to consider the definition of concavity from first principles; that is, by showing that the following inequality holds for any $t_1,t_2 \in [0,T)$ and $p \in [0,1]$:
$$\check{a}(pt_1+(1-p)t_2)^2 \geq p\check{a}(t_1)^2+(1-p)\check{a}(t_2)^2. $$

This is still out of reach of our analysis. However, following the work in \cite{IKS}, by setting an assumption on the initial scalar curvature of the manifold as well as the ratio $\frac{c}{a}$, we can prove the following proposition: 

\begin{myprop} \label{lowerboundofa} Suppose that at time $t=0$ we have $1 \leq \frac{c}{a}< 2$ and the scalar curvature $\text{S}$ at initial time is non-negative; that is, $\displaystyle \min_{s,t=0}(\text{S})\geq 0$. If there exists a finite time $T>0$ such that $\check{a}(T)=0$, then there exists a uniform constant $D >0$ such that $\check{a}(t)^2 \geq D(T-t)$. \hfill $\blacksquare$
\end{myprop}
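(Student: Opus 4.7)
The plan is to establish the pointwise differential inequality $\frac{d}{dt}\check{a}(t)^{2}\leq -D$ for some uniform constant $D>0$ at the Lipschitz spatial minimum of $a$; integrating from $t$ to $T$ and using the terminal condition $\check{a}(T)=0$ will then yield $\check{a}(t)^{2}\geq D(T-t)$ immediately.

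Two preservation results feed the argument. First, the non-negativity of the scalar curvature is preserved under the flow: its evolution is $\partial_{t}S = \Delta S + 2|\text{Ric}|^{2}$, whose reaction term is manifestly non-negative, so the scalar maximum principle (combined with $\min_{s,t=0}S\geq 0$) forces $\min_{s} S(\cdot,t)\geq 0$ throughout $S^{1}_{T}$. Second, Lemma \ref{moremax1} applied with $\lambda:=\max_{s,t=0}(c/a)\in[1,2)$ gives $1\leq c/a\leq\lambda<2$ everywhere on $S^{1}_{T}$; combined with Lemma \ref{order}, we therefore have $a\leq b\leq c\leq\lambda a$ pointwise.

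Next I would compute the forward difference quotient $\frac{d}{dt}\check{a}^{2}$ at the minimising point $z(t)$. Since $a'=0$ and $a''\geq 0$ there, the PDE for $a$ reduces to
\[
 \frac{d}{dt}\check{a}(t)^{2} = 2\check{a}\,a''\big|_{\min a} - \frac{4\bigl(\check{a}^{4} - (c^{2}-b^{2})^{2}\bigr)}{b^{2}c^{2}}\bigg|_{\min a}.
\]
Writing $b=\beta\check{a}$ and $c=\gamma\check{a}$ with $1\leq\beta\leq\gamma\leq\lambda<2$ at this point, the strict triangle inequality $a+b>c$ is quantitatively enforced by $\lambda<2$, which in turn makes the rescaled fiber scalar curvature $\check{a}^{2}R_{\mathrm{fiber}}$ strictly positive and bounded below by an explicit constant depending only on $\lambda$ on the compact region $\{1\leq\beta\leq\gamma\leq\lambda\}$.

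The crux of the argument, and the place I expect the main obstacle, is taming the non-negative contribution $2\check{a}\,a''|_{\min a}$, which could a priori be large. I would exploit $S\geq 0$ at this same spatial minimum to produce the coupling inequality
\[
 \frac{a''}{a} \leq \frac{R_{\mathrm{fiber}}}{2} - \frac{b''}{b} - \frac{c''}{c} - \frac{b'c'}{bc},
\]
obtained from the expression for $S$ after inserting $a'=0$. The mixed-derivative terms on the right-hand side, in which $b$ and $c$ are not themselves at critical points, are the delicate piece: they must be absorbed by a Young-inequality argument --- the technical device flagged in the introduction --- using the eccentricity bounds of Theorem \ref{compare} to keep $a,b,c$ mutually comparable, together with the quantitative gap $2-\lambda>0$ to close the estimate. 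Once this step is carried out, substituting back into the formula for $\frac{d}{dt}\check{a}^{2}$ produces the bound $\frac{d}{dt}\check{a}^{2}\leq -D$, and integration gives the conclusion. This mixed-derivative obstruction is precisely what is absent in the biaxial case of \cite{IKS}, where $b\equiv c$ collapses several of the troublesome terms and makes the sharper hypothesis $c<2a$ unnecessary.
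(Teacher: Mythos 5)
Your preservation lemmas ($S\geq 0$ and $1\leq c/a\leq\lambda<2$ for all time) and the general strategy of integrating a pointwise differential inequality are correct, and you have accurately located the main obstruction: after inserting $S\geq 0$ at the minimum of $a$, you are left with the uncontrolled quantity $-\tfrac{b''}{b}-\tfrac{c''}{c}-\tfrac{b'c'}{bc}$. However, the proposed fix does not close the gap. Young's inequality trades \emph{products} for squares of comparable size --- it cannot bound $-b''/b$ or $-c''/c$ from above, because at the spatial minimum of $a$ you have no sign or size information whatsoever on $b''$ and $c''$ (they can be arbitrarily large and negative), and the eccentricity bounds of Theorem~\ref{compare} say nothing about second derivatives. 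Your reduction therefore leaves a term that genuinely cannot be absorbed by the tools you cite, and the claimed inequality $\tfrac{d}{dt}\check a^2\leq -D$ is not established.

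The paper sidesteps this by tracking the minimum of $abc$, not of $a$. The point is that $\tfrac{a''}{a}+\tfrac{b''}{b}+\tfrac{c''}{c}$ appears as a single package both in $\partial_t\log(abc)$ and in the expression for $S$, so the constraint $S\geq 0$ eliminates \emph{all} the second-derivative terms at once rather than trading $a''$ for $b''$ and $c''$. What remains is a first-derivative expression $\tfrac{a'b'}{ab}+\tfrac{a'c'}{ac}+\tfrac{b'c'}{bc}$, and the crucial algebraic identity
\[
\frac{a'b'}{ab}+\frac{a'c'}{ac}+\frac{b'c'}{bc}
=\frac{1}{2}\bigl(\log(abc)\bigr)'\Bigl(\frac{a'}{a}+\frac{b'}{b}+\frac{c'}{c}\Bigr)
-\frac{1}{2}\Bigl(\Bigl(\frac{a'}{a}\Bigr)^2+\Bigl(\frac{b'}{b}\Bigr)^2+\Bigl(\frac{c'}{c}\Bigr)^2\Bigr)
\]
shows that at a spatial minimum of $abc$ (where $(\log(abc))'=0$) this is $\leq 0$. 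This is why the first-derivative cross terms disappear there but would \emph{not} disappear at a minimum of $a$ alone. The remaining reaction term is bounded by a polynomial inequality on the compact region $1\leq b/a\leq c/a\leq\lambda<2$, leading to $\tfrac{d}{dt}(abc)\leq -D_1(abc)^{1/3}$ along the minimum path; integrating and using $a\leq b\leq c\leq\lambda a$ converts this back to a bound on $\check a$. The hypothesis $\lambda<2$ enters at this polynomial step (and in your framing too), not in an absorption argument --- so the part of your proposal attributing the $c<2a$ requirement to a Young-inequality closure is also misplaced.

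To repair your argument, replace the tracked quantity $\check a$ by $\min_s(abc)$ and use the identity above; the rest of your outline (preservation lemmas, integration from $t$ to $T$, conversion between $abc$ and $a^3$ via comparability) then goes through.
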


\begin{proof}
Consider the quantity $\log(abc)$. This quantity evolves according to the PDE:
\begin{align*}
\partial_t \log(abc)&=\frac{\partial_t a}{a}+\frac{\partial_t b}{b}+\frac{\partial_t c}{c}
\\ &=\frac{a''}{a}+\frac{b''}{b}+\frac{c''}{c}+2\left(\frac{a'b'}{ab}+\frac{a'c'}{ac}+\frac{b'c'}{bc} \right)-2(\hat{K}_{12}+\hat{K}_{13}+\hat{K}_{23}) 
\\ &=\frac{a''}{a}+\frac{b''}{b}+\frac{c''}{c}+2\left(\frac{a'b'}{ab}+\frac{a'c'}{ac}+\frac{b'c'}{bc} \right)-2\left(\frac{2a^2b^2+2a^2c^2+2b^2c^2-a^4-b^4-c^4}{a^2b^2c^2} \right).
\end{align*}

Since the scalar curvature $\text{S}$ satisfies $\partial_t \text{S} = \Delta \text{S} +\frac{n}{2}\text{S}^2 \geq \Delta \text{S}$ from equation (\ref{scalarevolution}), we have $\text{S} \geq0$ by minimum principle of parabolic equations. From this and the expression for the scalar curvature $\text{S}$, we deduce:
\begin{align*}
&0 \leq -\left(\frac{a''}{a}+\frac{b''}{b}+\frac{c''}{c}+\frac{a'b'}{ab}+\frac{a'c'}{ac}+\frac{b'c'}{bc}-\frac{2a^2b^2+2a^2c^2+2b^2c^2-a^4-b^4-c^4}{a^2b^2c^2}\right)
\\ \Rightarrow \quad & \frac{a''}{a}+\frac{b''}{b}+\frac{c''}{c} \leq -\left(\frac{a'b'}{ab}+\frac{a'c'}{ac}+\frac{b'c'}{bc}\right)+\frac{2a^2b^2+2a^2c^2+2b^2c^2-a^4-b^4-c^4}{a^2b^2c^2}.
\end{align*}

Thus, we have:
\begin{align*}
\partial_t \log(abc)  \leq  \frac{a'b'}{ab}+\frac{a'c'}{ac}+\frac{b'c'}{bc}+\frac{a^2}{b^2c^2}+\frac{b^2}{a^2c^2}+\frac{c^2}{a^2b^2}-\frac{2}{a^2}-\frac{2}{b^2}-\frac{2}{c^2}.
\end{align*}

Furthermore, we have:
\begin{align*}
\frac{a'b'}{ab}+\frac{a'c'}{ac}+\frac{b'c'}{bc}&=\frac{1}{2}(\log(abc))'\left(\frac{a'}{a}+\frac{b'}{b}+\frac{c'}{c}\right)-\frac{1}{2}\left(\left(\frac{a'}{a}\right)^2+\left(\frac{b'}{b}\right)^2+\left(\frac{c'}{c}\right)^2 \right)
\\ & \leq \frac{1}{2}(\log(abc))'\left(\frac{a'}{a}+\frac{b'}{b}+\frac{c'}{c}\right).
\end{align*}

So, we acquire the inequality:
\begin{align*}
\partial_t \log(abc) &\leq  \frac{1}{2}(\log(abc))'\left(\frac{a'}{a}+\frac{b'}{b}+\frac{c'}{c}\right) +\frac{a^2}{b^2c^2}+\frac{b^2}{a^2c^2}+\frac{c^2}{a^2b^2}-\frac{2}{a^2}-\frac{2}{b^2}-\frac{2}{c^2}
\\ \Leftrightarrow \qquad \quad \partial_t (abc) &\leq \left(\frac{(abc)'}{2abc}\left(\frac{a'}{a}+\frac{b'}{b}+\frac{c'}{c}\right) +\frac{a^2}{b^2c^2}+\frac{b^2}{a^2c^2}+\frac{c^2}{a^2b^2}-\frac{2}{a^2}-\frac{2}{b^2}-\frac{2}{c^2}\right) abc.
\end{align*}

Since $S^1$ is compact, necessarily the global minima of $abc$ over $s$ satisfies $(abc)'=0$. By using the same argument as in Lemma \ref{tracecritical}, along the path $(z(t),t)$ for the global minima of $abc$ over $S^1$, we have:
\begin{align}\frac{d}{dt}abc &\leq  \left(\cancel{\frac{(abc)'}{2abc}\left(\frac{a'}{a}+\frac{b'}{b}+\frac{c'}{c}\right)}+\frac{a^2}{b^2c^2}+\frac{b^2}{a^2c^2}+\frac{c^2}{a^2b^2} -\frac{2}{a^2}-\frac{2}{b^2}-\frac{2}{c^2}\right)abc, \label{abc}
 \end{align}
 for all but a finite number of $t \in [0,T)$.

Following the proof by Isenberg, Knopf, and \v Se\v sum \cite{IKS}, we wish to bound the term $\frac{a^2}{b^2c^2}+\frac{b^2}{a^2c^2}+\frac{c^2}{a^2b^2} -\frac{2}{a^2}-\frac{2}{b^2}-\frac{2}{c^2} $ by a negative multiple of $\frac{1}{c^2}$; that is, for some $\mathfrak{C}>0$, we want:
\begin{align}
\frac{a^2}{b^2c^2} \left( 1+\left(\frac{b^2}{a^2}-\frac{c^2}{a^2} \right)^2-2\frac{b^2}{a^2}-2\frac{c^2}{a^2}\right) \leq -\frac{\mathfrak{C}}{c^2}. \label{boundabc}
\end{align}

This motivates us to study the polynomial $P(x,y)=1+(x^2-y^2)^2-2x^2-2y^2=x^4+y^4-2x^2y^2-2x^2-2y^2+1$ since the left hand side of (\ref{boundabc}) is  a positive multiple of $P\left(\frac{b}{a},\frac{c}{a}\right)$. As a result of the ordering $a \leq b \leq c \leq \lambda a$, we restrict the domain of the polynomial $P(x,y)$ to $x \in [1,y]$ and $y \in [x,\lambda]$. We call this domain $\Omega_\lambda \subset \mathbb{R}^2$.  

\begin{figure}[H]\centering  \captionsetup{justification=centering}
\begin{tikzpicture}[thick]
      \draw[->] (-0.5,0) -- (3.5,0) node[right] {$x$};
      \draw[->] (0,-0.5) -- (0,3.8) node[above] {$y$};
      \draw[white,fill=black!10] (0.5,0.5) -- (3,3) -- (0.5,3) -- cycle;
      \draw[->-] (3,3) -- (0.5,0.5);
      \draw[-] (0.5,0.5) -- (0.5,3);
      \draw[->-] (3,3) -- (0.5,3);
      \draw[-] (-0.1,3) -- (0.1,3);
      \node[left] at (0,3) {$\lambda$};
      \draw[-] (-0.1,0.5) -- (0.1,0.5);
      \node[left] at (0,0.5) {$1$};
      \draw[-] (0.5,-0.1) -- (0.5,0.1);
      \node[below] at (0.5,0) {$1$};
      \node[below, rotate=45] at (1.85,1.6) {$y=x$};
      \node[] at (1.3,2.2) {$\Omega_\lambda$};
    \end{tikzpicture}  
\caption[Domain $\Omega_\lambda$ of the function $P(x,y)$.]{Domain $\Omega_\lambda$ of the function $P(x,y)$. \\ The arrows indicate the direction where $P(x,y)$ increases.} \label{omegalambda}
\end{figure}
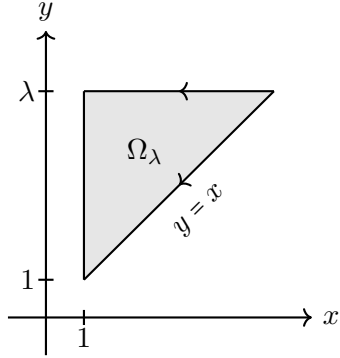

By examining $\text{grad}(P)=4(x(x^2-y^2-1),-y(x^2-y^2+1))$, we conclude that the critical points of $P$ are $(0,0),(0,\pm 1)$, and $(\pm 1,0)$, all of which lie outside of $\Omega_\lambda$. Thus, the maximum of $P(x,y)$ on $\Omega_\lambda$ lies on $\partial \Omega_\lambda$. Along the boundary of $\Omega_\lambda$ we have:
\begin{align*}
P(x,x)&=1-4x^2,
\\ P(1,y) &= y^4-4y^2,
\\ P(x,\lambda) &= 1+x^4+  \lambda^4-2\lambda^2x^2-2x^2-2\lambda^2.
\end{align*}

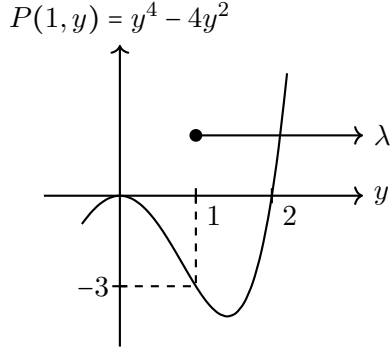
\begin{figure}[H]\centering
\begin{tikzpicture}[thick]
      \draw[->] (-1,0) -- (3.2,0) node[right] {$y$};
      \draw[->] (0,-2) -- (0,2) node[above] {$P(1,y)=y^4-4y^2$};
      \filldraw (1,0.8) circle (2pt);
      \draw[->] (1,0.8) -- (3.2,0.8) node[right] {$\lambda$};
      \draw[scale=1,domain=-0.5:2.2,smooth,variable=\x,black] plot ({\x},{(\x*\x*\x*\x-4*\x*\x)/2.5});
      \draw[-] (1,-0.1) -- (1,0.1);
      \draw[-] (2,-0.1) -- (2,0.1);
      \draw[-] (-0.1,-3/2.5) -- (0.1,-3/2.5);
      \node[left] at (0,-3/2.5) {$-3$  };
      \node[below right] at (1,0) {$1$  };
      \node[below right] at (2,-0) {$2$  };
      \draw[dashed] (1,0) -- (1,-3/2.5) -- (0,-3/2.5);
    \end{tikzpicture}
    \caption{Plot of $P(1,y)=y^4-4y^2$.}
    \label{plot}
\end{figure}

For any $\lambda >1$, the polynomials $P(x,x)$ and $P(x,\lambda)$ are strictly decreasing as $x$ increases from $1$ to $\lambda$. Thus, necessarily, the maximum of $P(x,y)$ on $\Omega_\lambda$ lies on the line segment connecting $(1,1)$ to $(1,\lambda)$. From the plot in Figure \ref{plot}, we note that the maximum of $P(1,y)$ for $ y \in [1,\lambda]$ is negative if $\lambda <2$. Thus, a sufficient and necessary condition for the maximum of $P(x,y)$ to be negative in $\Omega_\lambda$ is $\lambda <2$. Furthermore, $\displaystyle \max_{\Omega_\lambda}(P(x,y))= \max(-3,\lambda^4-4\lambda^2):=-\lambda_0$ for some $\lambda_0>0$.

Hence, if we have $\frac{c}{a}<2$ at time $t=0$, by compactness of $S^1$, there exists a $1<\lambda <2$ such that $\frac{c}{a} \leq \lambda$ at time $t=0$. By virtue of  Lemma \ref{moremax1}, we have $\frac{c}{a} \leq \lambda$ for all $S^1_T$ and thus:
\begin{align*}
 1+\left(\frac{b^2}{a^2}-\frac{c^2}{a^2} \right)^2-2\frac{b^2}{a^2}-2\frac{c^2}{a^2}=P\left( \frac{b}{a},\frac{c}{a}\right) &\leq -\lambda_0.
\\ \Rightarrow \quad \quad \frac{a^2}{b^2c^2} \left( 1+\left(\frac{b^2}{a^2}-\frac{c^2}{a^2} \right)^2-2\frac{b^2}{a^2}-2\frac{c^2}{a^2}\right) &\leq -\lambda_0\frac{a^2}{b^2c^2} < -\frac{\lambda_0}{\lambda^2c^2},
\end{align*}
since the ordering $a \leq b \leq c \leq \lambda a <2a$ is preserved under the Ricci flow. From inequality (\ref{abc}), along the minimum of $abc$ we have:
\begin{align*}\frac{d}{dt}abc &\leq  \left(-\frac{\lambda_0}{\lambda^2 c^2}\right)abc=-\frac{\lambda_0}{\lambda^2}\frac{ab}{c} \leq -\frac{\lambda_0}{\lambda^\frac{10}{3}}(abc)^\frac{1}{3},
 \end{align*}
where we used the fact that $a \leq b \leq c \leq \lambda a$ and Lemma \ref{moremax1} for the last inequality.
%


By setting $m(t)=abc(z(t),t)$,  for all but a finite number of $t \in [0,T)$ we have:
\begin{align*}
\frac{d}{dt}m(z(t),t) \leq -D_1m^\frac{1}{3},
\end{align*}
where $D_1 =\frac{\lambda_0}{\lambda^\frac{10}{3}}>0$ is some constant. Integrating this inequality from an arbitrary $t<T$ to $T$ and noting that $m(T)=0$ since $\check{a}(T)=0$, we have $m(t)^\frac{2}{3} \geq \frac{2D_1}{3}(T-t)$. Furthermore, by using Lemma \ref{moremax1} we obtain the inequality:
$$\check{a}(t)^3 \geq \frac{1}{\lambda^2}abc(z(t),t) = \frac{1}{\lambda^2}m(t) \geq \frac{2\sqrt{6}D_1^\frac{3}{2}}{9\lambda^2}(T-t)^\frac{3}{2}, $$
and thus $\check{a}(t)^2 \geq D(T-t)$ for the constant $D=\frac{2}{3}\frac{\lambda_0}{\lambda^\frac{14}{3}} >0$ where $\lambda_0=\min(3,4\lambda^2-\lambda^4)$.
\end{proof}

The next step is to find the bounds for  the first and second derivatives of the metric components. This is required since they appear in the curvature quantities of the metric $g$. We prove:

\begin{mylem} Suppose that $a \leq b \leq c \leq \lambda a$ for some $1 < \lambda <2$ at time $t=0$. Then for all $(s,t) \in S^1_T$ there exist constants $C_1(\lambda),C_2(\lambda), C_3(\lambda)>0$ such that: \label{boundaprimebprimecprime}
\begin{align*}
|a'(s,t)| &\leq C_1(\lambda) \leq \max\left(\frac{280\sqrt{3}}{9},\max_s(|a'(s,0)|) \right),
\\ |b'(s,t)| &\leq C_2(\lambda) \leq \max\left(\frac{4\sqrt{57}}{3},\max_s(|b'(s,0)|)\right),
\\ |c'(s,t)| &\leq C_3(\lambda) \leq \max\left(\frac{10\sqrt{93}}{9},\max_s(|c'(s,0)|)\right).  \tag*{$\blacksquare$}
\end{align*}
\end{mylem}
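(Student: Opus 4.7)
The plan is to derive a parabolic evolution equation for each of the squared derivatives $(a')^2$, $(b')^2$, and $(c')^2$, and to apply the parabolic maximum principle at spatial maxima. At such maxima, the third-derivative terms are absorbed into the Laplacian, the second derivative of the component in question vanishes (when $f' \neq 0$), and the surviving right-hand side becomes an algebraic expression in $a,b,c$ and their first derivatives alone, which can be closed using the preserved ordering $a \leq b \leq c \leq \lambda a$ with $\lambda < 2$.

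First I would differentiate each of the three Ricci flow PDEs displayed at the start of Section 3 in the spatial variable $s$, then commute $\partial_t$ with $\partial_s$ using the relation $[\partial_t,\partial_s] = -\bigl(\tfrac{a''}{a}+\tfrac{b''}{b}+\tfrac{c''}{c}\bigr)\partial_s$ to obtain evolution PDEs for $a', b', c'$. Multiplying by $2f'$ for $f \in \{a,b,c\}$ and writing $2f' f''' = ((f')^2)'' - 2(f'')^2$, each equation takes the parabolic form
\begin{equation*}
\partial_t \bigl((f')^2\bigr) = \bigl((f')^2\bigr)'' - 2(f'')^2 + (\text{drift})\cdot \bigl((f')^2\bigr)' + R_f,
\end{equation*}
where $R_f$ is polynomial in $\{a,b,c,a',b',c'\}$ but also involves second derivatives of the other two components through the commutator correction.

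Second, I would evaluate at a spatial maximum of $(f')^2$: the conditions $\bigl((f')^2\bigr)' = 0$ and $\bigl((f')^2\bigr)'' \leq 0$ yield $f' f'' = 0$, so either $f' = 0$ (in which case there is nothing to prove) or $f'' = 0$. In the latter case the $-2(f'')^2$ term vanishes. The surviving second derivatives of the \emph{other} two metric components would be eliminated by substituting from the Ricci flow PDEs, which express $g''$ in terms of $\partial_t g$ and first-order data in $a,b,c,a',b',c'$; the $\partial_t g$ contributions can then be bounded using Lemmas \ref{order}, \ref{moremax1}, and \ref{cmax}.

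Third, I would apply Young's inequality to each remaining cross product $f' g'$ and logarithmic-derivative term $(f')^2 \tfrac{g'}{g}$, distributing the weights so that the coefficient of $(f')^2$ on the right-hand side is strictly negative. Using $1 \leq \tfrac{b}{a}, \tfrac{c}{a}, \tfrac{c}{b} \leq \lambda < 2$, the scalar inequality collapses to
\begin{equation*}
\frac{d}{dt} \max_s (f')^2 \leq A_f(\lambda) - B_f(\lambda) \max_s (f')^2,
\end{equation*}
with explicit $A_f(\lambda), B_f(\lambda) > 0$. The maximum principle then yields $\max_s (f')^2 \leq \max\bigl(A_f(\lambda)/B_f(\lambda),\ \max_s (f'(\cdot,0))^2\bigr)$, and passing to the supremum over $\lambda \in [1,2)$ delivers the explicit numerical constants $\tfrac{280\sqrt{3}}{9}$, $\tfrac{4\sqrt{57}}{3}$, $\tfrac{10\sqrt{93}}{9}$ (each arising from the respective polynomial balance condition in the limit $\lambda \to 2$).

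The main obstacle is the coupling through the commutator: the evolution equation for $(a')^2$ carries the \emph{other} second derivatives $b''$ and $c''$, over which the maximum principle gives no sign information at the spatial maximum of $(a')^2$. Substituting the Ricci flow PDEs for $b''$ and $c''$ introduces $\partial_t b$ and $\partial_t c$, and the delicate point is to choose the Young weights so that the resulting inequality still produces a strictly negative coefficient of $(f')^2$ without requiring any new a priori bound on higher derivatives. It is precisely at this step that the hypothesis $c < 2a$ is used in full strength, ensuring that $B_f(\lambda)$ remains bounded away from zero on the admissible range of $\lambda$.
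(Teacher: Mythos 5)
The central assumption in your plan---that the commutator correction leaves the \emph{other} second derivatives $b''$, $c''$ in the evolution of $(a')^2$---is not correct, and it misdirects your whole strategy. When you compute $\partial_t(a') = (\partial_t a)' + [\partial_t,\partial_s] a'$, the term $+a'\left(\frac{b''}{b}+\frac{c''}{c}\right)$ coming from differentiating $a'\left(\frac{b'}{b}+\frac{c'}{c}\right)$ in $\partial_t a$ exactly cancels the $-a'\left(\frac{b''}{b}+\frac{c''}{c}\right)$ supplied by the commutator, leaving only $-a'\frac{a''}{a}$; this is precisely equation (\ref{aprime}) in the appendix. There is therefore no need for the proposed ``substitute the Ricci flow PDE for $b''$, $c''$'' step, and that step would not in any case close: trading $b''$, $c''$ for $\partial_t b$, $\partial_t c$ reintroduces time derivatives along the moving extremum, and Lemmas \ref{order}, \ref{moremax1}, and \ref{cmax} control ratios and spatial extrema of the components, not their time derivatives at a point.

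The paper instead tracks $\tilde a' = \max_s a'$ (and separately the minimum) rather than $\max_s(a')^2$, applies Young's inequality with tuned weights $\varepsilon_i$ to the six first-order cross-terms in (\ref{aprime}), and then absorbs the remaining positive term $\frac{4\tilde a'}{a^2}$ by showing that once $\tilde a'$ exceeds an explicit threshold depending only on $\lambda$, the bracket multiplying $\frac{4}{a^2}$ becomes nonpositive. Crucially, the prefactor $\frac{4}{a^2}$ is left in place as an unbounded but sign-definite multiplier; this is a barrier argument, not the constant-coefficient comparison $\frac{d}{dt}\max_s(f')^2 \leq A_f(\lambda) - B_f(\lambda)\max_s(f')^2$ you propose, whose coefficients cannot be taken uniform in time as $a\to 0$ near $T$. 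Your instincts that $\lambda<2$ is essential and that Young's inequality with $\lambda$-dependent weights closes the estimate are sound, but as written the proposal rests on a nonexistent coupling, an unsound workaround for it, and a final ODE comparison whose coefficients are not actually bounded.
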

\begin{proof} We recall the evolution equation for $a'$ in (\ref{aprime}):
\begin{align}
\partial_t(a')&= a''' +a''\left(\frac{b'}{b}+\frac{c'}{c}-\frac{a'}{a} \right)-a'\left(\frac{(b')^2}{b^2}+\frac{(c')^2}{c^2}+\frac{6a^4+2(b^2-c^2)^2}{(abc)^2} \right) \nonumber
\\ & \qquad +4 \left(\frac{a^3b'}{b^3c^2}+\frac{a^3c'}{b^2c^3}-\frac{c^2b'}{ab^3}-\frac{b^2c'}{ac^3}+\frac{cc'}{ab^2}+\frac{bb'}{ac^2}\right).\label{derivaderiv} \end{align}

We denote $\displaystyle \tilde{a}'=\max_s(a'(s,t))$. By using the same method as in Lemma \ref{tracecritical}, we have $\tilde{a}'' \leq 0$ and $\tilde{a}'=0$ along the path $(z(t),t)$ for the maximum value of $a'$. We substitute this in (\ref{derivaderiv}):
\begin{align}\frac{d}{dt}\tilde{a}' &\leq -\tilde{a}'\left(\frac{(b')^2}{b^2}+\frac{(c')^2}{c^2}+\frac{6a^4+2(b^2-c^2)^2}{(abc)^2} \right)+4 \left(\frac{a^3b'}{b^3c^2}+\frac{a^3c'}{b^2c^3}-\frac{c^2b'}{ab^3}-\frac{b^2c'}{ac^3}+\frac{cc'}{ab^2}+\frac{bb'}{ac^2}\right) \nonumber
\\ &= -\tilde{a}'\left(\frac{(b')^2}{b^2}+\frac{(c')^2}{c^2}+\frac{6a^2}{b^2c^2}+\frac{2b^2}{a^2c^2}+\frac{2c^2}{a^2b^2} \right) +\frac{4\tilde{a}'}{a^2} \nonumber
\\ & \qquad +4 \left(\frac{a^3b'}{b^3c^2}+\frac{a^3c'}{b^2c^3}-\frac{c^2b'}{ab^3}-\frac{b^2c'}{ac^3}+\frac{cc'}{ab^2}+\frac{bb'}{ac^2}\right). \label{ineqineq}
\end{align}

To deal with the terms in the second bracket on the right hand side of the inequality (\ref{ineqineq}) we use Young's inequality on each of them. So, for any $\varepsilon_i>0$ for $i=1,2,3,4,5,6$, we have:
\begin{align*}
\frac{a^3b'}{b^3c^2} &\leq \frac{\varepsilon_1}{2}\frac{(b')^2}{b^2}+\frac{1}{2\varepsilon_1}\frac{a^6}{b^4c^4} \leq \frac{\varepsilon_1}{2}\frac{(b')^2}{b^2}+\frac{1}{2\varepsilon_1}\frac{a^2}{b^2c^2},
\\ \frac{a^3c'}{b^2c^3} &\leq \frac{\varepsilon_2}{2}\frac{(c')^2}{c^2}+\frac{1}{2\varepsilon_2}\frac{a^6}{b^4c^4}\leq \frac{\varepsilon_2}{2}\frac{(c')^2}{c^2}+\frac{1}{2\varepsilon_2}\frac{a^2}{b^2c^2},
\\ -\frac{c^2b'}{ab^3} &\leq \frac{\varepsilon_3}{2}\frac{(b')^2}{b^2}+\frac{1}{2\varepsilon_3}\frac{c^4}{a^2b^4}\leq \frac{\varepsilon_3}{2}\frac{(b')^2}{b^2}+\frac{1}{2\varepsilon_3}\frac{\lambda^2c^2}{a^2b^2},
\\ -\frac{b^2c'}{ac^3} &\leq \frac{\varepsilon_4}{2}\frac{(c')^2}{c^2}+\frac{1}{2\varepsilon_4}\frac{b^4}{a^2c^4}\leq \frac{\varepsilon_4}{2}\frac{(c')^2}{c^2}+\frac{1}{2\varepsilon_4}\frac{b^2}{a^2c^2},
\\ \frac{bb'}{ac^2} &\leq \frac{\varepsilon_5}{2}\frac{(b')^2}{b^2}+\frac{1}{2\varepsilon_5}\frac{b^4}{a^2c^4}\leq \frac{\varepsilon_5}{2}\frac{(b')^2}{b^2}+\frac{1}{2\varepsilon_5}\frac{b^2}{a^2c^2},
\\ \frac{cc'}{ab^2} &\leq \frac{\varepsilon_6}{2}\frac{(c')^2}{c^2}+\frac{1}{2\varepsilon_6}\frac{c^4}{a^2b^4}\leq \frac{\varepsilon_6}{2}\frac{(c')^2}{c^2}+\frac{1}{2\varepsilon_6}\frac{\lambda^2c^2}{a^2b^2},
\end{align*}
where we used the fact that the ordering $a \leq b \leq c \leq \lambda a$ is preserved for all $t >0$. For a crude bound, we set $\varepsilon_1=\varepsilon_2$, $\varepsilon_3=\varepsilon_6$, and $\varepsilon_4=\varepsilon_5$. Therefore, we deduce:
\begin{align}
4 \left(\frac{a^3b'}{b^3c^2}+\frac{a^3c'}{b^2c^3}-\frac{c^2b'}{ab^3}-\frac{b^2c'}{ac^3}+\frac{cc'}{ab^2}+\frac{bb'}{ac^2}\right) &\leq 2\left(\varepsilon_1+\varepsilon_3+\varepsilon_5 \right)\left(\frac{(b')^2}{b^2}+\frac{(c')^2}{c^2}\right)+\frac{1}{\varepsilon_1}\frac{4a^2}{b^2c^2} \nonumber
\\ & \qquad +\frac{1}{\varepsilon_5}\frac{4b^2}{a^2c^2} +\frac{1}{\varepsilon_3}\frac{4\lambda^2c^2}{a^2b^2} . \label{e1e3e5}
\end{align}

We want the right hand side of inequality (\ref{e1e3e5}) to be a positive multiple of the first bracket in the inequality (\ref{ineqineq}). In order to choose the constants $\varepsilon_1,\varepsilon_3,\varepsilon_5 >0$, we solve the following simultaneous equations for some constant $\mathfrak{C}>0$:
\begin{align*}2(\varepsilon_1+\varepsilon_3+\varepsilon_5)=\mathfrak{C}, \quad 4=6\mathfrak{C}\varepsilon_1, \quad 4=2\mathfrak{C}\varepsilon_5, \quad \text{and} \quad 4\lambda^2=2\mathfrak{C}\varepsilon_3.
\end{align*}

The system above has a solution $\mathfrak{C}=\sqrt{\frac{16}{3}+4\lambda^2}$ for some $\varepsilon_1,\varepsilon_3,\varepsilon_5>0$. Thus, we obtain:
\begin{align*}\frac{d}{dt}\tilde{a}' &\leq (\mathfrak{C}-\tilde{a}')\left(\frac{(b')^2}{b^2}+\frac{(c')^2}{c^2}+\frac{6a^2}{b^2c^2}+\frac{2b^2}{a^2c^2}+\frac{2c^2}{a^2b^2} \right)+\frac{4\tilde{a}'}{a^2} .
\end{align*}

Whenever $\tilde{a}' \geq \mathfrak{C}$, the first term would be negative whereas the second term will be positive. We want the right hand side to be negative, so we now aim to get rid of the positive term  $\frac{4\tilde{a}'}{a^2}$ by adding a large enough negative term to it. By assuming $\tilde{a}' \geq \mathfrak{C}$, we have:
\begin{align*}\frac{d}{dt}\tilde{a}' &\leq  (\mathfrak{C}-\tilde{a}')\left(\frac{(b')^2}{b^2}+\frac{(c')^2}{c^2}\right) +(\mathfrak{C}-\tilde{a}')\left(\frac{6a^2}{b^2c^2}+\frac{2b^2}{a^2c^2}+\frac{2c^2}{a^2b^2} \right)+\frac{4\tilde{a}'}{a^2} 
\\ &= (\mathfrak{C}-\tilde{a}')\left(\frac{(b')^2}{b^2}+\frac{(c')^2}{c^2}\right)+\frac{4}{a^2}\left((\mathfrak{C}-\tilde{a}')\left(\frac{3a^4}{2b^2c^2}+\frac{b^2}{2c^2}+\frac{c^2}{2b^2}\right)+ \tilde{a}'\right).
\end{align*}

We now look at the function of two variables $P(x,y)=\frac{3}{2x^2y^2}+\frac{x^2}{2y^2}+\frac{y^2}{2x^2}$ and determine its minimum value in the region $\Omega_\lambda$ as in Figure \ref{omegalambda} in the proof for Proposition \ref{lowerboundofa}. Upon calculations, we note that the minimum of $P(x,y)$  in $\Omega_\lambda$ occurs at $(x,y)=(\lambda,\lambda)$, which gives us $\displaystyle \min_{\Omega_\lambda}P(x,y) \textstyle =1+\frac{3}{2\lambda^4}$. Hence:
\begin{align*}\frac{d}{dt}\tilde{a}'  &\leq (\mathfrak{C}-\tilde{a}')\left(\frac{(b')^2}{b^2}+\frac{(c')^2}{c^2}\right)+\frac{4}{a^2}\left((\mathfrak{C}-\tilde{a}')\left(1+\frac{3}{2\lambda^4}\right)+ \tilde{a}'\right),
\\ &= (\mathfrak{C}-\tilde{a}')\left(\frac{(b')^2}{b^2}+\frac{(c')^2}{c^2}\right)+\frac{4}{a^2}\left(\left(1+\frac{3}{2\lambda^4}\right)\mathfrak{C}-\frac{3}{2\lambda^4} \tilde{a}'\right).
\end{align*}

Thus, we conclude that $\displaystyle \max_s(a'(s,t))$ cannot exceed $\mathfrak{C}\left(1+\frac{2\lambda^4}{3} \right) \leq \frac{280\sqrt{3}}{9}$ and $\displaystyle \max_s(a'(s,0))$. By running a similar argument on the negative of $a$, we obtain the lower bound $\displaystyle \min_s(a'(s,t)) \textstyle \geq -\max\left(\frac{280\sqrt{3}}{9},\displaystyle \max_s(|a'(s,0)|) \right)$, which gives us the desired bound for $a'$. 

Using arguments identical to the above and noting that the minimum of $P(x,y)$ on the triangles $\{(x,y): \frac{1}{2} \leq x \leq 1, 1 \leq y \leq 2x\}$ and $\{(x,y): \frac{1}{2} \leq x \leq 1, x \leq y \leq 1\}$ are $P(1,\sqrt{2})=2$ and $P(1,1)=\frac{5}{2}$ respectively, similar bounds for $b'$ and $c'$ can be obtained.
\end{proof}

By the work of \v Se\v sum in \cite{SN3}, we note that a finite time singularity occurs at $T < \infty$ only if:
$$\limsup_{t \nearrow T} \max_{p \in M} |\text{Ric}(p,t)|_g=\infty.$$

Furthermore, this singularity is of Type I if we have:
$$\sup_{t \in [0,T)} \max_{p \in M} (T-t)|\text{Rm}(p,t)|_g \leq C < \infty, $$
for some constant $C>0$. Thus, we now want to investigate the behaviour of the norm of the Riemann curvature tensor, which is given by:
$$|\text{Rm}|^2_g=2(K_{01}^2+K_{02}^2+K_{03}^2+K_{12}^2+K_{13}^2+K_{23}^2). $$ 

\begin{mylem} \label{K123} If $1 \leq \frac{c}{a} \leq \lambda$ for some $\lambda <2$ at $t=0$, then for all $p \in M$ we have:
\begin{equation}
|K_{12}(p,t)|+|K_{13}(p,t)|+|K_{23}(p,t)| \leq \frac{C(\lambda)}{a(\pi(p),t)^2},
\end{equation}
where $\pi: S^1 \times S^3 \rightarrow S^1$ is the projection map from the manifold $S^1 \times S^3$ to the $S^1$ base of the foliation. \hfill $\blacksquare$
\end{mylem}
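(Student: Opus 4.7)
The plan is to exploit the explicit formulas for the fiber sectional curvatures $K_{12}, K_{13}, K_{23}$ of the Bianchi IX $S^3$ fiber (derivable from the curvature computations in Appendix A) together with the comparability of $a, b, c$ supplied by Lemmas \ref{order} and \ref{moremax1}. The starting observation is that each such sectional curvature is a rational function of the schematic form
$$K_{ij} = \frac{N_{ij}(a, b, c)}{(abc)^2},$$
where $N_{ij}$ is a polynomial in $(a^2, b^2, c^2)$ that is homogeneous of degree $4$ in $(a, b, c)$. This is consistent with the reaction term $-2a \cdot \frac{a^4 - (b^2-c^2)^2}{(abc)^2}$ appearing in the Ricci flow equation for $a$: since $R_{11} = a^2(K_{12} + K_{13})$ for the left-invariant frame, the pair $K_{12} + K_{13}$ (and by symmetry each individual $K_{ij}$) inherits this homogeneous structure.

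By Lemma \ref{order} the ordering $a \leq b \leq c$ is preserved along the flow, and by Lemma \ref{moremax1} the hypothesis $c/a \leq \lambda$ at $t=0$ propagates to all $(z,t) \in S^1_T$. Combined, this gives $a \leq b \leq c \leq \lambda a$ throughout the flow. Writing $b = \alpha a$ and $c = \beta a$ with $(\alpha, \beta) \in \Omega_\lambda$ (the compact region of Figure \ref{omegalambda}), the homogeneity of $N_{ij}$ yields
$$|K_{ij}(p, t)| = \frac{|N_{ij}(a, \alpha a, \beta a)|}{\alpha^2 \beta^2 \, a^6} = \frac{|\tilde{N}_{ij}(\alpha, \beta)|}{\alpha^2 \beta^2 \, a(\pi(p), t)^2},$$
where $\tilde{N}_{ij}(\alpha, \beta) = a^{-4} N_{ij}(a, \alpha a, \beta a)$ is a polynomial independent of $a$. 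Since the continuous function $\tilde{N}_{ij}/(\alpha^2 \beta^2)$ is bounded on the compact set $\Omega_\lambda$ by some $C_{ij}(\lambda) > 0$, we obtain $|K_{ij}(p, t)| \leq C_{ij}(\lambda)/a(\pi(p), t)^2$.

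Summing via the triangle inequality yields the stated bound with $C(\lambda) = C_{12}(\lambda) + C_{13}(\lambda) + C_{23}(\lambda)$. The argument is essentially dimensional: the fiber sectional curvatures scale as $(\text{length})^{-2}$, and comparability of $b, c$ with $a$ (within a factor of $\lambda$) forces each $K_{ij}$ to be of order $a^{-2}$. The only bookkeeping step is to read off $N_{ij}$ from the explicit curvature formulas in Appendix A and verify that $\tilde{N}_{ij}/(\alpha^2\beta^2)$ stays bounded on $\Omega_\lambda$, which is automatic because $\Omega_\lambda$ avoids the coordinate axes; hence no genuine analytic obstacle arises.
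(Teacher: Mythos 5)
The formulas in Appendix A (and restated at the start of the paper's proof) give
$$K_{12} = -\frac{a'b'}{ab} + \hat K_{12}, \qquad K_{13} = -\frac{a'c'}{ac} + \hat K_{13}, \qquad K_{23} = -\frac{b'c'}{bc} + \hat K_{23},$$
where $\hat K_{ij}$ is the intrinsic sectional curvature of the $\mathrm{SU}(2)$ fibre. Your scaling argument applies cleanly to the intrinsic parts $\hat K_{ij}$: those are indeed rational functions of $(a,b,c)$ that are homogeneous of degree $-2$, so on the compact region $\Omega_\lambda$ they are bounded by $C(\lambda)/a^2$. However, you have silently dropped the first-derivative contributions $-\frac{a'b'}{ab}$, $-\frac{a'c'}{ac}$, $-\frac{b'c'}{bc}$. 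These terms are not homogeneous in $(a,b,c)$ — they involve $a'$, $b'$, $c'$ — and nothing in Lemmas \ref{order} or \ref{moremax1} controls them. The pivot $\mathrm{Ric}_{11} = a^2(K_{12}+K_{13})$ that you invoke to justify the homogeneous structure is actually the identity $\mathrm{Ric}_{11} = -aa'' - aa'(\tfrac{b'}{b}+\tfrac{c'}{c}) + a^2(\hat K_{12}+\hat K_{13})$, so it holds for the hatted quantities, not for the full ambient $K_{ij}$; this is exactly where the conflation creeps in.

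The missing ingredient is the gradient estimate: Lemma \ref{boundaprimebprimecprime} establishes, via a maximum-principle argument on the evolution equations (\ref{aprime})--(\ref{cprime}), that $|a'|, |b'|, |c'|$ remain uniformly bounded (by constants depending only on $\lambda$ and the initial data) for as long as the flow exists. Once that is in hand, the paper estimates $\frac{|a'||b'|}{ab} \leq \frac{\tilde C(\lambda)}{ab} \leq \frac{\tilde C(\lambda)}{a^2}$ using $a \leq b$, and similarly for the other two cross terms, and then combines these with the algebraic bound on $\hat K_{ij}$ (which is where your scaling argument lives). So your approach is correct for the ``fibre half'' of each $K_{ij}$, but the proof as written does not cover the derivative half and would fail without importing the gradient bound; that bound is the genuinely nontrivial step, not bookkeeping.
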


\begin{proof}
The sectional curvatures $K_{12},K_{13}$, and $K_{23}$ are given by:
\begin{align*}
K_{12} &=-\frac{a'b'}{ab}+\frac{(a^2-b^2)^2-3c^4}{(abc)^2}+\frac{2}{a^2}+\frac{2}{b^2},
\\ K_{13} &=-\frac{a'c'}{ac}+\frac{(a^2-c^2)^2-3b^4}{(abc)^2}+\frac{2}{a^2}+\frac{2}{c^2},
\\ K_{23} &=-\frac{b'c'}{bc}+\frac{(b^2-c^2)^2-3a^4}{(abc)^2}+\frac{2}{b^2}+\frac{2}{c^2}.
\end{align*}

Thus, by Lemma \ref{boundaprimebprimecprime} and the fact that $a \leq b \leq c \leq \lambda a$ for all $t \geq 0$, we obtain the bound:
\begin{align*}
|K_{12}|+|K_{13}|+|K_{23}| &\leq \frac{|a'||b'|}{ab}+\frac{|a'||c'|}{ac}+\frac{|b'||c'|}{bc}+\frac{a^2}{b^2c^2}+\frac{b^2}{a^2c^2}+\frac{c^2}{a^2b^2}+\frac{2}{a^2}+\frac{2}{b^2}+\frac{2}{c^2}
\\ & \leq \tilde{C}(\lambda)\left(\frac{1}{ab}+\frac{1}{ac}+\frac{1}{bc}\right)+\frac{1}{c^2}+\frac{1}{a^2}+\frac{1}{\lambda^2 b^2}+\frac{6}{a^2} \leq \frac{C(\lambda)^2}{a^2},
\end{align*}
for some constant $C(\lambda)>0$.
\end{proof}

We now want to bound the sectional curvatures $K_{01},K_{02}$, and $K_{03}$.  These curvatures involve the second order derivatives of the quantities $a,b,$ and $c$. A similar bound as in Lemma \ref{K123} may be obtained, but this requires a bit more work.

\begin{mylem} If $1 \leq \frac{c}{a} \leq \lambda$ for some $\lambda <2$ at $t=0$, then for all $p \in M$ we have:
\begin{equation}
|K_{01}(p,t)|+|K_{02}(p,t)|+|K_{03}(p,t)| \leq \frac{C(\lambda)}{\check{a}(t)^2}. \tag*{$\blacksquare$}
\end{equation}
\end{mylem}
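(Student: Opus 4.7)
My plan is to follow the blueprint of Lemma \ref{K123}. I begin by writing out the explicit Bianchi IX formulas for $K_{01}, K_{02}, K_{03}$. Each of these mixed sectional curvatures decomposes schematically into a warp-curvature term $-f_i''/f_i$ (with $f_i \in \{a,b,c\}$), first-derivative pieces of the form $f'g'/(fg)$, and purely algebraic combinations of $a, b, c$. The first-derivative and algebraic pieces are dominated pointwise by $C(\lambda)/\check{a}(t)^2$ using Lemma \ref{boundaprimebprimecprime}, the preserved ordering $a \leq b \leq c \leq \lambda a$, and the trivial lower bound $a \geq \check{a}(t)$. Hence the whole task reduces to the pointwise estimate
\[
|a''(s,t)| + |b''(s,t)| + |c''(s,t)| \leq \frac{C(\lambda)}{\check{a}(t)},
\]
since $f_i \geq \check{a}$ converts this into the required $|f_i''|/f_i \leq C(\lambda)/\check{a}(t)^2$.

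To obtain the pointwise bound on the second derivatives I would derive evolution PDEs for $a''$, $b''$, $c''$ themselves. Starting from equation (\ref{aprime}) and its analogues for $b'$ and $c'$, I would differentiate once more in $s$ and correct the result using the commutator $[\partial_t,\partial_s] = -(a''/a + b''/b + c''/c)\partial_s$. The outcome is a parabolic system whose coefficients are all built from $a, b, c$ and their first derivatives, all of which are already under control. Applying the maximum-principle strategy of Lemma \ref{boundaprimebprimecprime} to the joint quantity $M(t) := \max_s(|a''| + |b''| + |c''|)$, the third-order spatial terms vanish and the fourth-order term is non-positive at the extremum; Young's inequality applied to the cross-coupling terms, with weights tuned as in Lemma \ref{boundaprimebprimecprime} to exploit the gap $\lambda < 2$, should produce a differential inequality for $M$ that, combined with the lower bound $\check{a}(t)^2 \geq D(T-t)$ from Proposition \ref{lowerboundofa}, integrates to $M(t) \leq C(\lambda)/\check{a}(t)$. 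Substituting this back into the explicit formulas for $K_{01}, K_{02}, K_{03}$ then yields the claim.

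The main obstacle is the algebraic bookkeeping in deriving the evolution equations for $a''$, $b''$, $c''$: since the commutator produces a term $-\Lambda a''$ with $\Lambda = a''/a + b''/b + c''/c$ being itself second-order, the three second derivatives couple to one another in a quadratic and symmetric way that cannot be decoupled by a single application of Young's inequality. Closing the estimate requires a careful choice of Young weights exploiting $\lambda < 2$ to ensure that the resulting dissipative coefficient is strictly positive, mirroring the role of that hypothesis in Proposition \ref{lowerboundofa} and Lemma \ref{boundaprimebprimecprime}. Once $M(t) \leq C(\lambda)/\check{a}(t)$ is in hand, the bound for $|K_{01}| + |K_{02}| + |K_{03}|$ follows directly.
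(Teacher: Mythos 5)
Your plan diverges from the paper's argument in a way that leaves a genuine gap. The paper does not estimate $a''$, $b''$, $c''$ directly. Instead, it works with the sectional curvatures $K_{0i}=-f_i''/f_i$ themselves, whose evolution equations (\ref{K01eq})--(\ref{K03eq}) have a clean reaction-diffusion structure $\partial_t K_{0i} = \Delta_g K_{0i} + 2K_{0i}^2 + \dots$. Using Lemma \ref{boundaprimebprimecprime} and the pinching $a\le b\le c<2a$, the paper reduces the evolution inequality for $K_{01}$ to
$\partial_t K_{01} \le \Delta_g K_{01} + 3K_{01}^2 + K_{02}^2 + K_{03}^2 + D_0/a^4$.
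The decisive idea you are missing is what neutralises those quadratic $K_{0i}^2$ terms: the paper introduces the auxiliary quantities $(a'/a)^2$, $(b'/b)^2$, $(c'/c)^2$, whose evolution equations (\ref{aprimeasquared})--(\ref{cprimecsquared}) each carry a favourable $-K_{0i}^2$ on the right-hand side. Adding $3(a'/a)^2+(b'/b)^2+(c'/c)^2$ to $K_{01}$ produces $P_+$ with $\partial_t P_+ \le \Delta_g P_+ + D/a^4$, which is now linear and amenable to the tracking argument of Lemma \ref{tracecritical}; the symmetric combination $P_-$ handles the lower bound. Finally, both the lower bound $\check{a}^2\ge D(T-t)$ from Proposition \ref{lowerboundofa} and the upper bound $\check{a}^2\le 4(T-t)$ from Lemma \ref{tracecritical} are needed to convert the resulting bound $|K_{01}|\lesssim (T-t)^{-1}$ into $|K_{01}|\lesssim\check{a}^{-2}$.

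Your route of deriving evolution PDEs for $a''$, $b''$, $c''$ and running the maximum principle on $M(t)=\max_s(|a''|+|b''|+|c''|)$ has two concrete problems. First, the appendix shows these equations contain terms quadratic in the second derivatives, both diagonal (for example the $-2a''\cdot(a''/a)$ contribution) and off-diagonal through the $4a^3(b''/(b^3c^2) + c''/(b^2c^3))$ and similar coupling terms, and the commutator only feeds the coupling further. You flag this as "the main obstacle" but offer only the hope that a tuned Young's inequality exploiting $\lambda<2$ will close it; that is precisely the step you would need to verify, and there is no a priori reason the dissipative diagonal piece survives the Young splitting of the off-diagonal coupling. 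Second, $a''$ is not the geometrically natural quantity --- $K_{0i}=-f_i''/f_i$ is --- and the target bound $|a''|\le C/\check{a}$ is pointwise \emph{uniform} in $s$ even where $a$ is far above $\check{a}$, which is stronger than what the estimate $|K_{0i}|\le C/\check{a}^2$ actually requires ($|f_i''|\le C\,f_i/\check{a}^2$ would suffice and is weaker at large $f_i$); attempting to prove the stronger statement only makes closing the differential inequality harder. The paper's formulation in terms of $K_{0i}$ together with the $(f'/f)^2$ auxiliaries is exactly what avoids this bookkeeping, and you should adopt it rather than the raw second-derivative approach.
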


\begin{proof}
 First we recall the evolution equation for $K_{01}$ computed in (\ref{K01eq}):
\begin{align*}
\partial_t K_{01}&=\Delta_g K_{01}+2K_{01}^2-2K_{01}\left( \frac{(b')^2}{b^2}+\frac{(c')^2}{c^2}+\frac{2a^4+2(b^2-c^2)^2}{(abc)^2}\right) \nonumber
\\ &\qquad +2K_{02}\left(\frac{2a^2}{b^2c^2}+\frac{2b^2}{a^2c^2}-\frac{2c^2}{a^2b^2}-\frac{a'b'}{ab} \right)+2K_{03}\left(\frac{2a^2}{b^2c^2}+\frac{2c^2}{a^2b^2}-\frac{2b^2}{a^2c^2}-\frac{a'c'}{ac} \right) \nonumber
\\ & \qquad +2\frac{a'}{a}\bigg(-\frac{(b')^3}{b^3}-\frac{(c')^3}{c^3}+\frac{6aa'}{b^2c^2}+\frac{4bb'}{a^2c^2}+\frac{4cc'}{a^2b^2}-\frac{2a'c^2}{a^3b^2}-\frac{2a'b^2}{a^3c^2}+\frac{4a'}{a^3}  \nonumber
\\ &\qquad -\frac{12a^2b'}{b^3c^2}-\frac{12a^2c'}{b^2c^3}-\frac{4b^2c'}{a^2c^3}-\frac{4c^2b'}{a^2b^3}\bigg)+4a^2\left(\frac{3(b')^2}{b^4c^2}+\frac{4b'c'}{b^3c^3}+\frac{3(c')^2}{b^2c^4} \right) \nonumber
\\ &\qquad -\frac{4}{a}\left(\frac{(c')^2}{ab^2}+\frac{(b')^2}{ac^2}+\frac{3(b')^2c^2}{ab^4}+\frac{3b^2(c')^2}{ac^4}-\frac{4cc'b'}{ab^3}-\frac{4bb'c'}{ac^3}\right).
\end{align*}

We can bound this quantity by using Lemma \ref{boundaprimebprimecprime} and the fact that $a \leq b \leq c < 2a$ for all $t \geq 0$. Thus, for some positive constants $D_1,D_2,D_3,D_4>0$ depending only on $\lambda >0$, we have:
\begin{align*}
\partial_t K_{01} &\leq \Delta_g K_{01}+2K_{01}^2+D_1\frac{K_{01}}{a^2}+D_2\frac{K_{02}}{a^2}+D_3\frac{K_{03}}{a^2}+\frac{D_4}{a^4}
\\ &\leq \Delta_g K_{01}+3K_{01}^2+K_{02}^2+K_{03}^2+\frac{D_0}{a^4},
\end{align*}
where we applied the Cauchy-Schwarz inequality to get $D_0:=D_1^2+D_2^2+D_3^2+D_4$. Furthermore, by using the same estimates and using Young's inequality for the last term in the evolution equation for the quantity $\frac{(a')^2}{a^2}$ obtained from (\ref{aprimeasquared}), we obtain:
\begin{align*}
\partial_t \left(\frac{(a')^2}{a^2} \right)&=\Delta_g \left(\frac{(a')^2}{a^2} \right)-2\frac{(a')^2}{a^2}\left(2\frac{(a')^2}{a^2}+ \frac{(b')^2}{b^2}+\frac{(c')^2}{c^2}+\frac{4(a^4+(b^2-c^2)^2)}{a^2b^2c^2}\right) \nonumber
\\ &\qquad +8\frac{a'}{a^2}\left(\frac{a^3b'}{b^3c^2}+\frac{a^3c'}{b^2c^3}-\frac{c^2b'}{ab^3}-\frac{b^2c'}{ac^3}+\frac{cc'}{ab^2}+\frac{bb'}{ac^2}\right)-2K_{01}^2 -4\frac{(a')^2}{a^2}K_{01},
\\ & \leq \Delta_g \left(\frac{(a')^2}{a^2} \right)+\frac{D_5}{a^4}-K_{01}^2,
\end{align*}
where $D_5>0$ is a positive constant. By repeating this for the evolution equations of the quantities $\frac{(b')^2}{b^2} $ and $\frac{(c')^2}{c^2}$ in (\ref{bprimebsquared}) and (\ref{cprimecsquared}) respectively, for some positive constants $D_6,D_7>0$ we have the inequalities:
\begin{align*}
\partial_t \left(\frac{(b')^2}{b^2} \right)& \leq \Delta_g \left(\frac{(b')^2}{b^2} \right)+\frac{D_6}{a^4}-K_{02}^2,
\\ \partial_t \left(\frac{(c')^2}{c^2} \right)& \leq \Delta_g \left(\frac{(c')^2}{c^2} \right)+\frac{D_7}{a^4}-K_{03}^2.
\end{align*}

Let us define $P_+=K_{01}+3\frac{(a')^2}{a^2}+\frac{(b')^2}{b^2}+\frac{(c')^2}{c^2} \geq K_{01}$. By linearity, the evolution of the quantity $P_+$ is governed by the inequality:
$$\partial_tP_+ \leq \Delta_g P_+ +\frac{D}{a^4}, $$
where $D:=D_0+3D_5+D_6+D_7>0$ is some positive constant. By tracking $\displaystyle \hat{P}_+=\max_s(P_+(s,t))$ using the same argument as in Lemma \ref{tracecritical}, we have:
\begin{equation} \label{pplus} \frac{d}{dt}P_+(z(t),t)) \leq \frac{D}{a^4} \leq \frac{D}{\check{a}^4}, \end{equation}
which implies that $P_+$, and hence $K_{01}$, cannot blow up to $+\infty$ as long as the metric component $a$ is bounded away from $0$.

To bound the sectional curvature $K_{01}$ from below, by using Lemma \ref{boundaprimebprimecprime} and the fact that $a \leq b \leq c < 2a$ for all $t \geq 0$, we note that there exists a constant $C_0>0$ such that:
$$\partial_tK_{01}\geq \Delta_gK_{01}-3K_{01}^2-K_{02}^2-K_{03}^2-\frac{C_0}{a^4}. $$

We now define $P_-=K_{01}-3\frac{(a')^2}{a^2}-\frac{(b')^2}{b^2}-\frac{(c')^2}{c^2} \leq K_{01}$. This quantity satisfies:
$$\partial_tP_- \geq \Delta_g P_- -\frac{C}{a^4}, $$
for some constant $C:=D+3D_5+D_6+D_7>0$. Tracking the minimum of $P_-$, we obtain:
\begin{equation} \label{pminus} \frac{d}{dt}P_-(z(t),t) \geq -\frac{C}{a^4} \geq -\frac{C}{\check{a}^4}. \end{equation}

By a similar argument as before, this implies that $P_-$, and hence $K_{01}$, cannot approach $-\infty$ as long as the metric quantity $a$ is bounded away from $0$. Both of these arguments imply that the sectional curvature $K_{01}$ is bounded on the manifold for times $[0,T)$ where $T$ is the singular time when the quantity $a$ reaches $0$ somewhere.

Since we have shown that $D(T-t)\leq \check{a}(t)^2 \leq C(T-t)$ for some constants $C,D>0$, by substituting the lower bound for $\check{a}$ in (\ref{pplus}) and (\ref{pminus}), we get the inequalities:
\begin{align*}
\frac{d}{dt}\hat{P}_+ &\leq \frac{D}{(T-t)^2},
\\ \frac{d}{dt}\check{P}_- &\geq -\frac{C}{(T-t)^2},
\end{align*}
for some constants $C,D>0$.

By integrating both of these, for some constants $\tilde{C},\tilde{D}>0$, we acquire:
$$|K_{01}| \leq \frac{\tilde{C}}{(T-t)}+\tilde{D} \leq \frac{\tilde{C}+T\tilde{D}}{(T-t)}, $$
for all but a finite number of $t \in [0,T)$.

If we substitute the upper bound for $\check{a}^2$ in the inequality above, we get the required bound for $|K_{01}|$. A similar procedure can be used to bound $|K_{02}|$ and $|K_{03}|$ using equations (\ref{K02eq}) and (\ref{K03eq}). This proves the lemma.
\end{proof}

By putting all of these together, we have:
$$|\text{Rm}|^2_g=2(K_{01}^2+K_{02}^2+K_{03}^2+K_{12}^2+K_{13}^2+K_{23}^2) \leq \frac{3D^2}{\check{a}^4}+ \frac{3C^2}{a^4}\leq \frac{\tilde{C}}{\check{a}^4}, $$ 
for some finite constant $0 < \tilde{C}$. The lower bound for $\check{a}$ in Proposition \ref{lowerboundofa} implies the bound $(T-t)^2|\text{Rm}|^2_g \leq C$ for some finite constant $C>0$. If we take the the supremum over $S^1 \times S^3$ and $t\in [0,T)$ on the left hand side, we can show that the curvature asymptotics satisfy the Type I condition as in (\ref{type1}). 

Finally, we note that the metric constructed by Isenberg, Knopf, and \v Se\v sum \cite{IKS} also satisfies the conditions required for Proposition \ref{lowerboundofa}, so the set of metrics satisfying the sufficient conditions in Theorem \ref{theorem1} is non-empty. Thus, this proves Theorem \ref{theorem1}.

\appendix 
\section{Computation of the Ricci Flow Equations}
We can endow the manifold $N=S^1 \times S^3$ with a metric of the form:
\begin{equation}
g = \phi(z)^2 \, dz^2 +a(z)^2 \omega^1 \otimes \omega^1 + b(z)^2 \omega^2 \otimes \omega^2 + c(z)^2 \omega^3 \otimes \omega^3, \label{ansatzmetric}
\end{equation}
where $\phi,a,b,$ and $c$ are positive $2\pi$-periodic functions on the base manifold $S^1$. Now we wish to calculate the curvature tensors of the metric $g$ in the frame $\{\partial_z=E_0, E_1,E_2,E_3\}$ for $N=S^1 \times S^3$. The first thing we have to consider is the derivatives of the frame vector fields. We write the derivatives of the frame vector fields using the symbols $\Sigma_{\alpha \beta}^\gamma$ for $\alpha, \beta, \gamma \in \{0,1,2,3\}$ by:
$$\nabla_{E_\alpha}E_\beta=\Sigma_{\alpha \beta}^\gamma E_\gamma. $$

Recall that the Christoffel symbols $\Gamma_{ij}^k$ are defined via local coordinates. Thus, the symbols $\Sigma_{\alpha \beta}^\gamma$ are not the same as Christoffel symbols because the vector fields $\{E_0,E_1,E_2,E_3\}$ on $N$ are not coordinate vector fields. We call $\Sigma_{\alpha \beta}^\gamma$ the frame symbols. Using the Koszul formula, we calculate each of the 
frame symbols. We have that:
\begin{align}
2g(E_\gamma,E_\gamma) \Sigma_{\alpha \beta}^\gamma &= 2g(\nabla_{E_\alpha}E_\beta,E_\gamma) \nonumber
\\ &= E_\alpha g(E_\beta,E_\gamma)+E_\beta g(E_\gamma,E_\alpha)-E_\gamma g(E_\alpha,E_\beta) \nonumber
\\ & \qquad -g(E_\alpha,[E_\beta,E_\gamma])+g(E_\beta,[E_\gamma,E_\alpha])+g(E_\gamma,[E_\alpha,E_\beta]). \label{koszuleq}
\end{align}

\begin{myrem} Note that the frame symbols $\Sigma_{\alpha \beta}^\gamma$ may not be symmetric in the $\alpha$ and $\beta$ indices since the (possibly non-zero) quantity $[E_\alpha,E_\beta]$ in the last term is anti-symmetric. This shows us that the frame symbols are not the same as the Christoffel symbols. \hfill $\blacksquare$
\end{myrem}

\begin{myprop} \label{framesymbol} Let $i,j,k \in \{1,2,3\}$. The frame symbols for the tangent bundle frame $\{\partial_z = E_0,E_1,E_2,E_3\}$ on $(M,g)$ are given by:

\begin{enumerate}
\item $\Sigma_{00}^0 =\frac{1}{2}g^{00}\partial_zg_{00}$, 
\item $\Sigma_{0i}^0=\Sigma_{i0}^0=\Sigma^i_{00}=0$,
\item $\Sigma_{j0}^i=\Sigma_{0j}^i=\frac{1}{2}g^{ii}\partial_z( \delta_i^j g_{ij})$,
\item $\Sigma_{ij}^0=-\frac{1}{2}g^{00}\partial_z(\delta_i^j g_{ij})$,
\item $\Sigma_{ij}^k=\hat{\Sigma}_{ij}^k=\epsilon_{ijk}g^{kk}(g_{ii}-g_{jj}-g_{kk})$ where $\hat{\Sigma}_{ij}^k$ is the frame symbol for the fibre metric $\hat{g}_z=a(z)^2 w^1 \otimes w^1 + b(z)^2 w^2 \otimes w^2 + c(z)^2 w^3 \otimes w^3$.  \hfill $\blacksquare$
\end{enumerate}

\end{myprop}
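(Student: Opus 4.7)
The plan is to apply the Koszul identity (\ref{koszuleq}) case by case, exploiting three structural features of the metric (\ref{ansatzmetric}): the frame $\{E_0, E_1, E_2, E_3\}$ diagonalises $g$ (so $g^{\alpha\alpha}=1/g_{\alpha\alpha}$ and no off-diagonal contractions arise); the metric coefficients $g_{\alpha\alpha}$ depend only on $z$, so $E_i g_{\alpha\beta}=0$ for $i\in\{1,2,3\}$ while $E_0 g_{\alpha\beta}=\partial_z g_{\alpha\beta}$; and the vector fields $E_1, E_2, E_3$ are the standard left-invariant frame on $\mathrm{SU}(2)\cong S^3$, which implies $[E_0,E_i]=0$ for all $i\in\{1,2,3\}$ and leaves the brackets $[E_i,E_j]$ purely fibrewise (with the usual structure constants of $\mathfrak{su}(2)$).

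First I would handle (1): plugging $\alpha=\beta=\gamma=0$ into (\ref{koszuleq}), the three derivative terms collapse to a single $E_0 g_{00}=\partial_z g_{00}$ (by symmetry two of them cancel after combining with the sign), while all bracket terms vanish because $[E_0,E_0]=0$. For (2), the vanishing of $\Sigma^0_{0i}$ and $\Sigma^i_{00}$ follows since every derivative term involves $E_i g_{jk}=0$ or $E_0 g_{0i}=0$ (as $g_{0i}\equiv 0$), and every bracket term contains $[E_0,E_i]=0$. For (3) and (4), the only nonzero derivative term in each case is $E_0 g_{ii}=\partial_z g_{ii}$ applied to the sole surviving diagonal entry (enforcing $i=j$ via the Kronecker delta), and the bracket terms again vanish by $[E_0,E_i]=0$; the sign difference between (3) and (4) comes from the position of the $-E_\gamma g_{\alpha\beta}$ term in the Koszul formula.

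For (5), the key observation is that \emph{all} derivative terms in (\ref{koszuleq}) vanish when $\alpha,\beta,\gamma\in\{1,2,3\}$ because each is of the form $E_i g_{jk}=0$; hence the frame symbols reduce entirely to Lie-bracket contributions, and those brackets live on the fibre $S^3$ and see only the fibre metric $\hat g_z$. Therefore $\Sigma_{ij}^k$ must coincide with $\hat\Sigma_{ij}^k$ by construction. The explicit formula $\hat\Sigma_{ij}^k=\epsilon_{ijk}g^{kk}(g_{ii}-g_{jj}-g_{kk})$ then drops out by substituting the $\mathfrak{su}(2)$ brackets into the three surviving terms of Koszul and collecting, using antisymmetry of $\epsilon_{ijk}$ to account for the two signs.

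The only potentially delicate point is bookkeeping of the structure constants and the resulting signs in (5): one needs to fix the orientation convention so that $[E_i,E_j]=2\epsilon_{ijk}E_k$ (or whichever normalization is used elsewhere in the paper) and then verify that the three Koszul terms combine with the factor $2 g_{kk}$ on the left to produce exactly $\epsilon_{ijk}(g_{ii}-g_{jj}-g_{kk})$. Everything else is a mechanical application of the Koszul formula, so I expect the proof to read as a table of six short computations rather than a substantial argument.
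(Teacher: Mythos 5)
Your proposal follows essentially the same route as the paper: a case-by-case specialization of the Koszul identity (\ref{koszuleq}), exploiting orthogonality of the frame, the $z$-only dependence of the metric coefficients, $[E_0,E_i]=0$, and the $\mathfrak{su}(2)$ structure constants for the fibre brackets. Two small bookkeeping corrections: the paper's convention is $[E_i,E_j]=-2\epsilon_{ijk}E_k$ (not $+2$), and getting this sign wrong flips the overall sign of part (5); and in parts (3)--(4) the surviving bracket term $g(E_0,[E_i,E_j])$ vanishes not because $[E_0,E_i]=0$ but because $[E_i,E_j]$ lies in the span of the fibre frame and is therefore orthogonal to $E_0$.
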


\begin{proof}
Equation (\ref{koszuleq}) can be simplified for certain combinations of $i,j$, and $k$. Note that $E_1,E_2,E_3 \in \mathscr{L}(TS^3)$ while $E_0 \in \mathscr{L}(TB)$, where $\mathscr{L}$ denotes the lift of the respective vector fields to $\Gamma(TM)$. Since $E_0$ and $E_i$ are in the horizontal and vertical lifts of the tangent bundle of product manifold, we have that $[E_0,E_i]=0$ for $i=1,2,3$. Furthermore, the frame is chosen to be orthogonal and $g(E_0,E_0)=\phi(z)^2=g_{00}$. Thus, if all three indices are $0$, then $2g_{00}\Sigma_{00}^0 =\partial_zg_{00}$. If exactly two of the indices in the frame symbol $\Sigma_{\alpha \beta}^\gamma$ are $0$, then the frame symbol vanishes identically. 

If exactly one of the indices $\alpha,\beta,$ or $\gamma$ is $0$, we have $\Sigma_{\alpha \beta}^\gamma= \Sigma_{\beta \alpha}^\gamma$ since the anti-symmetric term $g(E_\gamma,[E_\alpha,E_\beta])$ in (\ref{koszuleq}) vanishes. By symmetry, there are two cases: $\Sigma_{j0}^i$ or $\Sigma_{ij}^0$. In both cases, the last three terms of (\ref{koszuleq}) vanish since $\{E_1,E_2,E_3\}$ is chosen so that $[E_i,E_j]=-2\epsilon_{ijk} E_k$. Furthermore, since $E_0 \perp E_i$ for any $i=1,2,3$, only one of the remaining terms survives for both of these symbols. Respectively, we compute $2 g_{ii} \Sigma_{j0}^i= \partial_z(\delta_i^jg_{ij})$ and $2\Sigma_{ij}^0=-\partial_z (\delta_i^j g_{ij})$, which gives us the result.

Finally, for non-zero indices, if all three indices are the same, then clearly the frame symbols vanish. If exactly two of them are the same, there are three cases: $\Sigma_{ii}^j$, $\Sigma_{ji}^i$ or $\Sigma_{ij}^i$. In all cases, the first three terms of (\ref{koszuleq}) vanish since $a(z)$, $b(z)$, and $c(z)$ are constant in the fibres of each $z \in B$. This implies that the frame symbol $\Sigma_{ij}^k$ is the same as the frame symbol $\hat{\Sigma}_{ij}^k$ on the fibre $(S^3,\hat{g}_z)$. Furthermore, the last three terms also vanish since $[E_i,E_i]=0$ and $[E_i,E_j]=- 2\epsilon_{ijk} E_k \perp E_i$. 

Thus, the only (possibly) non-vanishing frame symbol of this form is when $i,j$, and $k$ are distinct. Substituting in all the Lie brackets of vector fields, we get $2g^{kk}\Sigma_{ij}^k=2(\epsilon_{jki} g_{ii}-\epsilon_{kij} g_{jj}-\epsilon_{ijk} g_{kk})$. By permuting the indices of the Levi-Civita symbol, we deduce the desired result.
\end{proof}

With these explicitly defined, we can calculate the Riemann curvature tensor components. Note here that we are calculating the tensor with respect to the frame $\{\partial_z=E_0,E_1,E_2,E_3\}$ instead of coordinate vector fields, so we denote $\text{Rm}_{\alpha\beta\gamma \delta}=g(\text{Rm}(E_\alpha,E_\beta)E_\gamma,E_\delta)$ for the indices $\alpha,\beta,\gamma, \delta \in \{0,1,2,3\}$.

\begin{myprop} \label{riemanntensors} Let $i,j,k \in \{1,2,3\}$ be distinct indices. The Riemann curvature tensor components in the frame $\{\partial_z = E_0,E_1,E_2,E_3\}$ on $(M,g)$ are given by: 
\begin{enumerate}
\item $\text{Rm}_{000i}=\text{Rm}_{0000}=0$,
\item $\text{Rm}_{i00j}=0$ and $\text{Rm}_{i00i}=\text{Rm}_{0ii0}=\frac{1}{4}(g^{00}g^{ii}\partial_zg_{00}\partial_zg_{ii}+g^{ii}(\partial_z g_{ii})^2-2\partial_{zz}g_{ii})$,
\item $\text{Rm}_{ijjk}=\text{Rm}_{ijj0}=0$ and $\text{Rm}_{ijji}=-\frac{1}{4}g^{00}\partial_zg_{jj} \partial_z g_{ii} + \hat{\text{Rm}}_{ijji}$ where $\hat{\text{Rm}}_{ijji} = -
g^{kk}(g_{kk}^2 -(g_{ii}-g_{jj})^2)-2(g_{kk}-g_{jj}-g_{ii})$ is the Riemann curvature tensor of the fibre metric $\hat{g}_z$. \hfill $\blacksquare$
\end{enumerate}

\end{myprop}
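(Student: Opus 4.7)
The plan is to compute $\mathrm{Rm}(E_\alpha,E_\beta)E_\gamma = \nabla_{E_\alpha}\nabla_{E_\beta}E_\gamma - \nabla_{E_\beta}\nabla_{E_\alpha}E_\gamma - \nabla_{[E_\alpha,E_\beta]}E_\gamma$ directly, substitute the frame symbols from Proposition \ref{framesymbol}, and exploit the structural features of the frame. Three facts will do most of the work: the Lie brackets $[E_0,E_i]=0$ and $[E_i,E_j]=-2\epsilon_{ijk}E_k$, the orthogonality of the frame, and the observation that each frame symbol and each metric component depends only on $z$, so $E_i(f)=0$ for any function $f$ pulled back from $S^1$. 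Together with the usual Riemann tensor symmetries $\mathrm{Rm}_{\alpha\beta\gamma\delta}=-\mathrm{Rm}_{\beta\alpha\gamma\delta}=-\mathrm{Rm}_{\alpha\beta\delta\gamma}=\mathrm{Rm}_{\gamma\delta\alpha\beta}$, these will collapse the independent cases down to a small handful.

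First I would dispose of item (1): $\mathrm{Rm}_{0000}=0$ and $\mathrm{Rm}_{000i}=0$ follow immediately from antisymmetry in the first pair of indices. Next, for item (2), the orthogonality of the frame forces $\mathrm{Rm}_{i00j}=0$ whenever $i\ne j$, because every term in $\mathrm{Rm}(E_i,E_0)E_0$ lands in $\mathrm{span}(E_0,E_i)$ once the nonzero frame symbols $\Sigma_{00}^0$ and $\Sigma_{i0}^i$ are inserted and $[E_i,E_0]=0$ is used. For $\mathrm{Rm}_{i00i}$ one expands
\[
\mathrm{Rm}(E_i,E_0)E_0 = \Sigma_{00}^0\,\Sigma_{i0}^i E_i - E_0(\Sigma_{i0}^i)E_i - (\Sigma_{i0}^i)^2 E_i,
\]
pairs with $E_i$, and substitutes $\Sigma_{i0}^i=\tfrac12 g^{ii}\partial_zg_{ii}$ and $\Sigma_{00}^0=\tfrac12 g^{00}\partial_zg_{00}$. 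Simplification using $\partial_z(g^{ii}\partial_zg_{ii}) = g^{ii}\partial_{zz}g_{ii}-g^{ii}g^{ii}(\partial_zg_{ii})^2$ yields the claimed formula.

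For item (3) I would treat the vanishing and nonvanishing parts separately. The orthogonality argument again kills $\mathrm{Rm}_{ijj0}$ and $\mathrm{Rm}_{ijjk}$ once one notes that in $\nabla_{E_i}\nabla_{E_j}E_j$, $\nabla_{E_j}\nabla_{E_i}E_j$, and $\nabla_{[E_i,E_j]}E_j$ the only new indices produced are $E_0$ (via $\Sigma_{jj}^0$) and $E_i$ (via the triple products of $\Sigma$'s with distinct indices), so the result lies in $\mathrm{span}(E_0,E_i)$; a quick check that the $E_0$ coefficient vanishes gives $\mathrm{Rm}_{ijj0}=0$. The heart of the calculation is $\mathrm{Rm}_{ijji}$: here one expands each of the three terms, observing that $\nabla_{E_j}E_j=\Sigma_{jj}^0 E_0=-\tfrac12 g^{00}\partial_z g_{jj}\,E_0$ sends the first term into $-\tfrac14 g^{00}g^{ii}\partial_zg_{jj}\,\partial_zg_{ii}\,E_i$, which is what produces the base contribution $-\tfrac14 g^{00}\partial_zg_{jj}\partial_zg_{ii}$ after pairing with $E_i$. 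The remaining contributions come from the purely fiber-horizontal combinations $\Sigma_{ij}^k\Sigma_{jk}^i$ and $-2\epsilon_{ijk}\Sigma_{kj}^i$, both of which depend only on $g_{11},g_{22},g_{33}$; by construction these reproduce $\hat{\mathrm{Rm}}_{ijji}$ for the fiber metric $\hat g_z$, giving the warped-product splitting.

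I expect the bookkeeping of signs from $\epsilon_{ijk}$ and the careful separation of fiber versus base contributions to be the main source of friction rather than any genuine difficulty. To manage this I would fix $(i,j,k)$ to be an even permutation of $(1,2,3)$ once and for all, so that $\epsilon_{ijk}=+1$ and $\epsilon_{kji}=-1$, and then verify directly that the fiber part reduces to the value $\hat{\mathrm{Rm}}_{ijji}=-g^{kk}(g_{kk}^2-(g_{ii}-g_{jj})^2)-2(g_{kk}-g_{jj}-g_{ii})$ stated in the proposition, matching the known Bianchi IX formula on $\mathrm{SU}(2)$. Combined with the base contribution computed above, this completes the proof.
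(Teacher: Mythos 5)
Your proposal matches the paper's proof essentially step for step: you expand $\mathrm{Rm}(E_\alpha,E_\beta)E_\gamma$ via the frame symbols of Proposition \ref{framesymbol}, use $[E_0,E_i]=0$ and $[E_i,E_j]=-2\epsilon_{ijk}E_k$ together with the $z$-only dependence of the symbols to reduce to the same three scalar computations, and close with the identity $\partial_z g^{ii}=-(g^{ii})^2\partial_z g_{ii}$ (equivalently $\partial_z(g^{ii}g_{ii})=0$), exactly as the paper does. The only cosmetic difference is that you bound the image of $\mathrm{Rm}(E_i,E_j)E_j$ by $\mathrm{span}(E_0,E_i)$ and propose to check the $E_0$ coefficient vanishes, whereas in fact it never appears (since $\Sigma_{jk}^0=0$ for $j\neq k$); this is harmless.
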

\begin{proof}
From the previous proposition, we note that the frame symbols are only dependent on the variable $z$. The first identity is clear. For the second, we compute:
\begin{align*}
\nabla^2E_0(E_i,E_0)-\nabla^2E_0(E_0,E_i) &=\nabla_{E_i}\nabla_{E_0}E_0-\nabla_{E_0} \nabla_{E_i} E_0 -\nabla_{[E_i,E_0]}E_0
\\ & = \nabla_{E_i}(\Sigma^0_{00}E_0)- \nabla_{E_0} (\Sigma_{i0}^i E_i) 
\\ &= \Sigma_{00}^0\Sigma_{i0}^iE_i-\partial_z \Sigma_{i0}^i E_i - \Sigma_{i0}^i \nabla_{E_0}E_i 
\\ &= \Sigma_{00}^0\Sigma_{i0}^iE_i-\partial_z \Sigma_{i0}^i E_i - \Sigma_{i0}^i \Sigma_{0i}^i E_i
\\ & = \frac{1}{4}g^{00}g^{ii}\partial_zg_{00}\partial_zg_{ii}E_i-\frac{1}{2}\partial_z(g^{ii}\partial_z g_{ii})E_i-\frac{1}{4}(g^{ii}\partial_z g_{ii})^2 E_i.
\end{align*}

By using the fact that $0=\partial_s(g^{ii}g_{ii})=g_{ii}\partial_sg^{ii}+g^{ii}\partial_sg_{ii}$, we can simplify this to:
$$\nabla^2E_0(E_i,E_0)-\nabla^2E_0(E_0,E_i)=\left(\frac{1}{4}g^{00}g^{ii}\partial_zg_{00}\partial_zg_{ii}+ \frac{1}{4}(g^{ii}\partial_zg_{ii})^2-\frac{1}{2}g^{ii}\partial_{zz}g_{ii} \right)E_i. $$

By taking the inner product of this quantity with $E_i$ and $E_j$ and using the symmetries of the Riemann curvature tensor, we obtain the second relation. For the third, we calculate:
\begin{align*}
& \nabla^2E_j(E_i,E_j)-\nabla^2E_j(E_j,E_i)
\\ &=\nabla_{E_i}\nabla_{E_j} E_j -\nabla_{E_j}\nabla_{E_i}E_j -\nabla_{[E_i,E_j]}E_j
\\ &= \nabla_{E_i}(\Sigma_{jj}^0E_0)-\nabla_{E_j}(\Sigma_{ij}^kE_k)+ 2\epsilon_{ijk}\nabla_{E_k}E_j
\\ &= \Sigma_{jj}^0\Sigma_{i0}^iE_i -\Sigma_{ij}^k \Sigma_{jk}^i E_i +\epsilon_{ijk}\Sigma_{kj}^i E_i
\\ &= -\frac{1}{4}g^{00}g^{ii} \partial_z g_{ii}\partial_z g_{jj}E_i- \epsilon_{ijk}\epsilon_{jki} g^{ii}g^{kk}(g_{ii}-g_{jj}-g_{kk})(g_{jj}-g_{kk}-g_{ii}) E_i
\\ & \qquad +2 \epsilon_{ijk}\epsilon_{kji}g^{ii}(g_{kk}-g_{jj}-g_{ii})E_i
\\ & =\left(-\frac{1}{4}g^{00}g^{ii}\partial_zg_{jj} \partial_z g_{ii}- g^{ii}g^{kk}(g_{kk}^2 -(g_{ii}-g_{jj})^2)-2g^{ii}(g_{kk}-g_{jj}-g_{ii}) \right) E_i.
\end{align*}

Again, by taking the inner product with $E_0, E_i$, and $E_k$, and using the symmetries of the Riemann curvature tensor, we extract the results. Note that throughout the calculation above, the last two terms do not involve the vector $E_0$ at all, hence they are the same as the Riemann curvature tensor $\hat{\text{Rm}}_{ijjk}$ of the fibre metric  $\hat{g}_z$.
\end{proof}

We can also calculate the Ricci tensor components and the sectional curvatures by taking the appropriate traces of the Riemann curvature tensor from above.

\begin{myprop} We have the following Ricci tensor components:
\begin{enumerate}
\item $\text{Ric}_{\alpha \beta}=0$ for $\alpha \neq \beta$,
\item $ \text{Ric}_{00}=\frac{1}{4} \displaystyle\sum_{i=1}^3 \left(g^{00}(g^{ii})^2\partial_zg_{00}\partial_zg_{ii}+(g^{ii}\partial_z g_{ii})^2-2g^{ii}\partial_{zz}g_{ii}) \right)$,
\item $ \text{Ric}_{ii}=\frac{1}{4}\displaystyle\big(g^{00}(g^{00}g^{ii}\partial_zg_{00}\partial_zg_{ii}+g^{ii}(\partial_z g_{ii})^2 - 2\partial_{zz}g_{ii})- \sum_{\substack{j=1 \\j\neq i \\ j \neq k}}^3 g^{jj}(g^{00}\partial_zg_{jj} \partial_z g_{ii}+4g^{kk}(g_{kk}^2 -(g_{ii}-g_{jj})^2)+8(g_{kk}-g_{jj}-g_{ii}))\big)$. \hfill $\blacksquare$
\end{enumerate}
\end{myprop}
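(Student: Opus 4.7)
The plan is to compute the Ricci tensor components as the contractions $\text{Ric}_{\alpha\beta} = g^{\gamma\delta}\text{Rm}_{\gamma\alpha\beta\delta}$, using the explicit Riemann curvature tensor formulas provided by Proposition \ref{riemanntensors}. Since the frame $\{E_0, E_1, E_2, E_3\}$ is orthogonal and the metric is diagonal in this frame, each contraction collapses to the single sum $\text{Ric}_{\alpha\beta} = \sum_\gamma g^{\gamma\gamma}\text{Rm}_{\gamma\alpha\beta\gamma}$, so the rest is an exercise in index bookkeeping.

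For the off-diagonal vanishing (claim 1), I would split into the two cases $(\alpha,\beta)=(0,i)$ and $(\alpha,\beta)=(i,j)$ with $i\neq j$. In each case every individual summand $g^{\gamma\gamma}\text{Rm}_{\gamma\alpha\beta\gamma}$ can be rewritten, using the Riemann symmetries $\text{Rm}_{\alpha\beta\gamma\delta} = -\text{Rm}_{\beta\alpha\gamma\delta} = -\text{Rm}_{\alpha\beta\delta\gamma} = \text{Rm}_{\gamma\delta\alpha\beta}$, into one of the forms already listed in Proposition \ref{riemanntensors}. Specifically, all such terms reduce either to the vanishing cases $\text{Rm}_{000i}=0$, $\text{Rm}_{i00j}=0$, $\text{Rm}_{ijj0}=0$, or $\text{Rm}_{ijjk}=0$ (for three distinct indices), or vanish trivially by antisymmetry when two of the four slots coincide.

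For the diagonal components (claims 2 and 3), I would simply substitute the non-vanishing Riemann tensor values from Proposition \ref{riemanntensors} and simplify. For $\text{Ric}_{00}$, only the terms $\sum_i g^{ii}\text{Rm}_{i00i}$ survive; using the pair-swap identity $\text{Rm}_{i00i}=\text{Rm}_{0ii0}$ together with the explicit formula in (2) of the preceding proposition, and distributing the factor $g^{ii}$ before summing over $i$, yields claim 2 verbatim. For $\text{Ric}_{ii}$, the contraction decomposes as $g^{00}\text{Rm}_{0ii0} + \sum_{j\neq i} g^{jj}\text{Rm}_{ijji}$; the first term uses (2) and each term of the sum uses (3), where for each $j$ the index $k$ is understood to be the unique element of $\{1,2,3\}\setminus\{i,j\}$. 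Pulling out a common prefactor of $\frac{1}{4}$ then reorganises everything into the stated expression.

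The main (and essentially only) obstacle is careful index bookkeeping: one must invoke the Riemann tensor symmetries repeatedly to reduce each contracted component to a form that appears in Proposition \ref{riemanntensors}, and one must correctly identify the implicit ``third'' index $k$ in the two terms of the $\text{Ric}_{ii}$ sum. No further geometric or analytic input is needed beyond the previous proposition.
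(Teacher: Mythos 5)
Your approach is correct, and it is the only natural one: the paper in fact states this proposition with no proof at all (it is presented as a routine consequence of Proposition~\ref{riemanntensors}), and the contraction $\text{Ric}_{\alpha\beta}=g^{\gamma\delta}\text{Rm}_{\gamma\alpha\beta\delta}=\sum_\gamma g^{\gamma\gamma}\text{Rm}_{\gamma\alpha\beta\gamma}$ you describe is exactly the intended computation. Your bookkeeping is right: for $\text{Ric}_{00}$ only $\sum_i g^{ii}\text{Rm}_{i00i}$ survives; for $\text{Ric}_{ii}$ the $\gamma=i$ term $g^{ii}\text{Rm}_{iiii}$ drops by antisymmetry and the remaining terms $g^{00}\text{Rm}_{0ii0}+\sum_{j\neq i}g^{jj}\text{Rm}_{jiij}$ reduce to the stated formula after the pair-swap $\text{Rm}_{jiij}=\text{Rm}_{ijji}$ and factoring out $\tfrac14$; and for the off-diagonals, each summand reduces via the Riemann symmetries to one of $\text{Rm}_{000i}$, $\text{Rm}_{i00j}$ ($i\neq j$), $\text{Rm}_{ijj0}$, $\text{Rm}_{ijjk}$ ($i,j,k$ distinct), or a component with a repeated antisymmetric index, all of which vanish.
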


\begin{myprop} \label{seccurve} For $i,j \in \{1,2,3\}$, the sectional curvatures of the metric $g$ are given by:
\begin{enumerate}
\item $K_{0i}= \frac{1}{4}g^{00}g^{ii}(g^{00}g^{ii}\partial_zg_{00}\partial_zg_{ii}+g^{ii}(\partial_z g_{ii})^2-2\partial_{zz}g_{ii})$,
\item $K_{ij}= -\frac{1}{4}
g^{00}g^{ii}g^{jj}\partial_z g_{ii} \partial_z g_{jj}+\hat{K}_{ij}$ where $\hat{K}_{ij}=-g^{ii}g^{jj}(
g^{kk}(g_{kk}^2 -(g_{ii}-g_{jj})^2)+(g_{kk}-g_{jj}-g_{ii}))$ for $k \neq i,j$ is the sectional curvature of the fibre metric  $\hat{g}_z$ spanned by $E_i$ and $E_j$. \hfill $\blacksquare$
\end{enumerate}
\end{myprop}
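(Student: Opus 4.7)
The plan is to derive both formulas by direct substitution into the standard sectional curvature formula, using the Riemann tensor components already computed in Proposition \ref{riemanntensors}. Since the frame $\{E_0,E_1,E_2,E_3\}$ is $g$-orthogonal by construction, for any $\alpha\neq\beta$ one has $g_{\alpha\beta}=0$, so the general expression
$$K(X,Y)=\frac{\text{Rm}(X,Y,Y,X)}{g(X,X)g(Y,Y)-g(X,Y)^2}$$
collapses to $K_{\alpha\beta}=g^{\alpha\alpha}g^{\beta\beta}\,\text{Rm}_{\alpha\beta\beta\alpha}$ on this frame. Both items then amount to plugging in the two nontrivial cases supplied by Proposition \ref{riemanntensors}.

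For item (1), I would set $\alpha=0$ and $\beta=i$ and substitute the expression for $\text{Rm}_{0ii0}$ from Proposition \ref{riemanntensors}(2); after distributing the factor $g^{00}g^{ii}$, the claimed formula for $K_{0i}$ appears immediately. For item (2), I would set $\alpha=i$, $\beta=j$ with $i,j\in\{1,2,3\}$ distinct and substitute $\text{Rm}_{ijji}=-\tfrac{1}{4}g^{00}\partial_z g_{ii}\partial_z g_{jj}+\hat{\text{Rm}}_{ijji}$ from Proposition \ref{riemanntensors}(3). Multiplying by $g^{ii}g^{jj}$ cleanly separates the result into a horizontal warping piece $-\tfrac{1}{4}g^{00}g^{ii}g^{jj}\partial_z g_{ii}\partial_z g_{jj}$ and a purely vertical piece $g^{ii}g^{jj}\hat{\text{Rm}}_{ijji}$ whose ingredients depend only on the fibre data.

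Finally, to identify this vertical piece with the sectional curvature $\hat{K}_{ij}$ of the fibre metric $\hat{g}_z$, I would appeal to the observation made during the proof of Proposition \ref{riemanntensors}, namely that $\hat{\text{Rm}}_{ijji}$ was obtained without any reference to $E_0$ and therefore coincides with the Riemann tensor of $\hat{g}_z$ evaluated on $(E_i,E_j,E_j,E_i)$. Since $\{E_1,E_2,E_3\}$ is $\hat{g}_z$-orthogonal with the same diagonal entries $g_{ii}$, applying the orthogonal-frame formula intrinsically inside the fibre gives $\hat{K}_{ij}=g^{ii}g^{jj}\hat{\text{Rm}}_{ijji}$, matching the claimed expression. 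No substantive obstacle arises: Proposition \ref{seccurve} is essentially a bookkeeping corollary of Proposition \ref{riemanntensors}, and the only care required is attentive tracking of signs, factors, and the orientation of the $\epsilon_{ijk}$ symbols when expanding $\hat{\text{Rm}}_{ijji}$.
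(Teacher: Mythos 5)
Your approach is exactly right and coincides with the paper's own (terse) justification: the paper proves Proposition \ref{riemanntensors} and then simply remarks that the sectional curvatures follow ``by taking the appropriate traces of the Riemann curvature tensor from above,'' which is precisely your orthogonal-frame formula $K_{\alpha\beta}=g^{\alpha\alpha}g^{\beta\beta}\,\text{Rm}_{\alpha\beta\beta\alpha}$. Item (1) follows immediately, and the decomposition of $K_{ij}$ into the warping piece plus $g^{ii}g^{jj}\hat{\text{Rm}}_{ijji}$, with the identification of the latter as the intrinsic fibre curvature, is the intended argument.

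One caution, though: you should not have asserted that the vertical piece ``matches the claimed expression'' without actually multiplying it out. Using $\hat{\text{Rm}}_{ijji}=-g^{kk}(g_{kk}^2-(g_{ii}-g_{jj})^2)-2(g_{kk}-g_{jj}-g_{ii})$ from Proposition \ref{riemanntensors}(3), your formula $\hat{K}_{ij}=g^{ii}g^{jj}\hat{\text{Rm}}_{ijji}$ gives
\begin{equation*}
\hat{K}_{ij}=-g^{ii}g^{jj}\bigl(g^{kk}(g_{kk}^2-(g_{ii}-g_{jj})^2)+2(g_{kk}-g_{jj}-g_{ii})\bigr),
\end{equation*}
whereas the proposition as printed has coefficient $1$, not $2$, on the last term. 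Your version is the correct one: substituting $g_{11}=a^2,\ g_{22}=b^2,\ g_{33}=c^2$ with coefficient $2$ reproduces $\hat{K}_{12}=\frac{(a^2-b^2)^2-3c^4}{(abc)^2}+\frac{2}{a^2}+\frac{2}{b^2}$, exactly the explicit formula used throughout the paper, while the coefficient $1$ would give $\frac{(a^2-b^2)^2-2c^4}{(abc)^2}+\frac{1}{a^2}+\frac{1}{b^2}$. So the proposition statement has a typo, and a careful execution of your own plan catches it; the claim of a match glosses over a discrepancy your method in fact exposes.
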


Recall that we have defined an arclength coordinate in the cylinder-to-sphere rule. The arclength element $ds$ is induced from $\phi(z)$ by defining the arclength coordinate $s(z)=\int_0^z \phi(w) \, dw$ and using this coordinate in place of $z$. In fact, this is a more natural geometric quantity as the formulation for the Ricci flow equation written in this coordinate, as we shall see later, is strongly parabolic. Therefore, we would not have to resort to DeTurck's Trick to formulate a system of parabolic PDEs.

 If we choose to calculate the symbols in the arclength coordinate frame (that is, $\{\partial_s=E_0,E_1,E_2,E_3\}$), we simply substitute $g^{00}=1$ in Propositions \ref{framesymbol}-\ref{seccurve} to get: 
 \begin{mycor} In the arclength coordinate frame, the Christoffel symbols can be written explicitly as:
\begin{enumerate}
\item $\Sigma_{00}^0 = \Sigma_{0i}^0=\Sigma_{i0}^0=\Sigma^i_{00}=0$,
\item $\Sigma_{j0}^i=\Sigma_{0j}^i=\frac{1}{2}g^{ii}\partial_s( \delta_i^j g_{ij})$,
\item $\Sigma_{ij}^0=-\frac{1}{2}\partial_s (\delta_i^j g_{ij})$,
\item $\Sigma_{ij}^k=\hat{\Sigma}_{ij}^k=\epsilon_{ijk}g^{kk}(g_{ii}-g_{jj}-g_{kk})$ where $\hat{\Sigma}_{ij}^k$ is the frame symbol for the fibre metric $\hat{g}_s=a(s)^2 \omega^1 \otimes \omega^1 + b(s)^2 \omega^2 \otimes \omega^2 + c(s)^2 \omega^3 \otimes \omega^3$. \hfill $\blacksquare$
\end{enumerate}
\end{mycor}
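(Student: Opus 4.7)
The plan is to obtain the corollary as a direct specialisation of Proposition \ref{framesymbol} to the arclength parametrisation, exploiting the fact that the change of frame from $\{\partial_z,E_1,E_2,E_3\}$ to $\{\partial_s,E_1,E_2,E_3\}$ only rescales the base direction while leaving the fibre vector fields $E_1,E_2,E_3$ untouched.

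First I would record the precise effect of the substitution $s(z)=\int_0^z\phi(w)\,dw$ on the metric coefficients. Since $ds=\phi(z)\,dz$, in the new frame $\partial_s=\phi^{-1}\partial_z$, so $g(\partial_s,\partial_s)=\phi^{-2}g_{00}=1$. Hence in the arclength frame we have $g_{00}=g^{00}=1$ and, consequently, $\partial_s g_{00}=0$. The fibre components $g_{ii}=a^2,b^2,c^2$ are unchanged as functions of the point, only reparametrised, so $\partial_s g_{ii}=\phi^{-1}\partial_z g_{ii}$ and the structure constants $[E_i,E_j]=-2\epsilon_{ijk}E_k$ and orthogonality $E_0\perp E_i$ still hold.

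Next I would substitute these into each of the five items of Proposition \ref{framesymbol}. Item (1) gives $\Sigma_{00}^0=\tfrac12 g^{00}\partial_s g_{00}=0$, which together with item (2) yields the first bullet of the corollary. Items (3) and (4) directly produce the second and third bullets once $g^{00}=1$ and $\partial_z$ is relabelled as $\partial_s$. Item (5) depends only on the fibre data and the structure constants, both of which are invariant under the reparametrisation of the base, so it transcribes verbatim, establishing the last bullet.

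There is really no main obstacle: the whole content of the corollary is that the Koszul computation of Proposition \ref{framesymbol} simplifies when the base coordinate is arclength, and this simplification amounts to the single identity $g_{00}=1$. The only point to be careful about is to verify that the commutator relations $[E_0,E_i]=0$ and $[E_i,E_j]=-2\epsilon_{ijk}E_k$ used in the proof of Proposition \ref{framesymbol} continue to hold when $E_0=\partial_s$; this follows because $\partial_s=\phi^{-1}\partial_z$ and $\phi$ is a function of $z$ alone (hence a function on the base $S^1$), and the fibre vector fields annihilate base functions, so $[\partial_s,E_i]=\phi^{-1}[\partial_z,E_i]=0$.
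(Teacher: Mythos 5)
Your proposal is correct and matches the paper's approach exactly: the paper derives this corollary by substituting $g^{00}=1$ into Proposition \ref{framesymbol}, which is precisely the specialisation you carry out, with the added (and welcome) verification that $[\partial_s,E_i]=0$ still holds so the Koszul computation transcribes verbatim.
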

\begin{mycor}

In the arclength coordinate frame, the Riemmann curvature tensor components for  distinct indices $i,j,k \in \{1,2,3\}$ can be written explicitly as:
\begin{enumerate}
\item $\text{Rm}_{000i}=\text{Rm}_{0000}=0$,
\item $\text{Rm}_{i00j}=0$ and $\text{Rm}_{i00i}=\text{Rm}_{0ii0}=\frac{1}{4}(g^{ii}(\partial_s g_{ii})^2-2\partial_{ss}g_{ii})$,
\item $\text{Rm}_{ijjk}=\text{Rm}_{ijj0}=0$ and $\text{Rm}_{ijji}=-\frac{1}{4}\partial_sg_{jj} \partial_s g_{ii} + \hat{\text{Rm}}_{ijji}$ where $\hat{\text{Rm}}_{ijji} = -
g^{kk}(g_{kk}^2 -(g_{ii}-g_{jj})^2)-2(g_{kk}-g_{jj}-g_{ii})$ is the Riemann curvature tensor of the fibre metric $\hat{g}_s$.\hfill $\blacksquare$
\end{enumerate}
\end{mycor}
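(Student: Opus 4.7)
The plan is to specialise the general formulas in Proposition \ref{riemanntensors} to the arclength frame; the corollary will follow by direct substitution once two short observations are in place.

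First, I would record the effect of introducing the arclength coordinate $s(z) = \int_0^z \phi(w)\,dw$. By definition $ds = \phi(z)\,dz$, so the frame $\{\partial_s = E_0, E_1, E_2, E_3\}$ differs from $\{\partial_z, E_1, E_2, E_3\}$ only by the rescaling $\partial_s = \phi(z)^{-1}\partial_z$. In the new frame the base component of the metric is normalised, $g_{00} = g(\partial_s,\partial_s) = 1$, whence $g^{00} = 1$ and $\partial_s g_{00} = \partial_{ss} g_{00} = 0$, while the vertical components $g_{11} = a^2$, $g_{22} = b^2$, $g_{33} = c^2$ remain positive functions of $s$ alone.

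Second, I would check that the structural hypotheses used in the proof of Proposition \ref{riemanntensors} are preserved by this change of frame: orthogonality $E_\alpha \perp E_\beta$ for $\alpha \neq \beta$ is untouched because $\phi$ depends only on the base, the horizontal/vertical splitting is unchanged, and the Lie brackets $[E_0, E_i] = 0$ and $[E_i, E_j] = -2\epsilon_{ijk} E_k$ are preserved since $\phi^{-1}$ commutes with the purely vertical fields $E_i$. Hence the Koszul-formula argument leading to Proposition \ref{riemanntensors} goes through verbatim in the arclength frame, and the three identities stated there are still valid provided we interpret $g_{\alpha\beta}$ and $\partial_z$ in the new frame as $g_{\alpha\beta}$ and $\partial_s$.

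Finally, I would read off the corollary by substituting $g^{00} = 1$ and $\partial_s g_{00} = 0$ into Proposition \ref{riemanntensors}. Item (1) transfers unchanged. In item (2) the cross term $g^{00} g^{ii} \partial_z g_{00}\, \partial_z g_{ii}$ vanishes identically, leaving
\[
\text{Rm}_{i00i} \;=\; \tfrac{1}{4}\bigl(g^{ii}(\partial_s g_{ii})^2 - 2\,\partial_{ss} g_{ii}\bigr).
\]
In item (3) the prefactor $g^{00}$ collapses to $1$, giving $\text{Rm}_{ijji} = -\tfrac{1}{4}\,\partial_s g_{jj}\,\partial_s g_{ii} + \hat{\text{Rm}}_{ijji}$, while the fibre contribution $\hat{\text{Rm}}_{ijji}$ is untouched because it depends only on the fibre metric $\hat{g}_s$. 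No genuine obstacle arises; the entire content of the step is the careful identification of which expressions in Proposition \ref{riemanntensors} involve $g^{00}$ or $\partial_z g_{00}$, and the recognition that each such term either reduces to $1$ or vanishes in the arclength frame. The corollary is recorded chiefly because the sectional curvatures $K_{01}, K_{02}, K_{03}$ that drive the Type I analysis in Section 4 are most transparent in this form.
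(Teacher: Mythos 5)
Your proposal is correct and is exactly the paper's argument: the paper introduces the corollary with the remark that one ``simply substitutes $g^{00}=1$'' into Propositions \ref{framesymbol}--\ref{seccurve}, which, since $g_{00}\equiv 1$ in the arclength frame, also makes $\partial_s g_{00}=0$ and kills the cross term in item (2). You spell out the implicit checks (that the orthogonality, splitting, and Lie-bracket hypotheses of Proposition \ref{riemanntensors} survive the rescaling $\partial_s=\phi^{-1}\partial_z$) a bit more carefully than the paper does, but the route is the same.
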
 

\begin{mycor}
In the arclength coordinate frame, the Ricci curvature tensor components can be written explicitly as:
\begin{enumerate}
\item $\text{Ric}_{\alpha \beta}=0$ for $\alpha \neq \beta$,
\item $ \text{Ric}_{00}=\frac{1}{4} \displaystyle\sum_{i=1}^3 \left((g^{ii}\partial_s g_{ii})^2-2g^{ii}\partial_{ss}g_{ii}) \right)$,
\item $ \text{Ric}_{ii}=\frac{1}{4}\displaystyle\big(g^{ii}(\partial_z g_{ii})^2 - 2\partial_{zz}g_{ii}- \sum_{\substack{j=1 \\j\neq i \\ j \neq k}}^3 g^{jj}(\partial_zg_{jj} \partial_z g_{ii}+4g^{kk}(g_{kk}^2 -(g_{ii}-g_{jj})^2)+8(g_{kk}-g_{jj}-g_{ii}))\big)$.\hfill $\blacksquare$
\end{enumerate}
\end{mycor}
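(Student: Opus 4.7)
The plan is to deduce this corollary by direct specialization of the general Ricci-tensor formulas established in the preceding proposition. Under the reparameterization $s(z)=\int_0^z \phi(w)\,dw$, the vector field $\partial_s$ is of unit length with respect to $g$, so $g_{00}=1$ and hence $g^{00}=1$; crucially, this also forces $\partial_s g_{00}=0$ since $g_{00}$ is constant in the arclength frame. Every derivative $\partial_z$ in the general formula should therefore be re-read as $\partial_s$ after this change of variable.

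Statement (1) is inherited verbatim from the general case, since the symmetry argument killing off-diagonal Ricci components in the preceding proposition is coordinate-independent; substituting $g^{00}=1$ cannot produce any new off-diagonal terms. For statement (2), I would write out the formula for $\text{Ric}_{00}$ from the previous proposition; the summand $g^{00}(g^{ii})^2\,\partial_z g_{00}\,\partial_z g_{ii}$ vanishes identically because $\partial_s g_{00}=0$, leaving exactly the expression $\frac{1}{4}\sum_{i=1}^3 \bigl((g^{ii}\partial_s g_{ii})^2-2g^{ii}\partial_{ss}g_{ii}\bigr)$. For statement (3) the same mechanism kills the $g^{00}\cdot g^{00}g^{ii}\,\partial_z g_{00}\,\partial_z g_{ii}$ term inside $\text{Ric}_{ii}$, and the cross-term $g^{00}\,\partial_z g_{jj}\,\partial_z g_{ii}$ reduces to $\partial_s g_{jj}\,\partial_s g_{ii}$; reassembling the remaining terms reproduces the claimed formula.

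There is no genuine obstacle to overcome here: the proof is a bookkeeping exercise in which setting $g^{00}=1$ either leaves a term unchanged or annihilates it through the factor $\partial_s g_{00}=0$. Alternatively, one could prove the corollary \emph{ab initio} by tracing $\text{Ric}_{\alpha\beta}=g^{\gamma\delta}\text{Rm}_{\gamma\alpha\beta\delta}$ against the arclength-frame Riemann components supplied in the previous corollary; because the only nonvanishing Riemann entries are $\text{Rm}_{i00i}$ and $\text{Rm}_{ijji}$ (with $i,j,k$ distinct), the trace collapses to the same handful of terms and directly yields parts (2) and (3), while part (1) follows because no nonzero Riemann component has off-diagonal inner indices.
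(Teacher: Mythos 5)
Your proposal is correct and follows essentially the same route as the paper, which passes to the arclength frame and specialises the general Ricci formulas by substituting the identity $g_{00}\equiv 1$. You are slightly more explicit than the paper's terse ``substitute $g^{00}=1$'' in noting that one also needs $\partial_s g_{00}=0$ to kill the $\partial_s g_{00}\,\partial_s g_{ii}$ terms; this is the correct reading (the arclength gauge fixes $g_{00}$ to be identically $1$, not merely $1$ at a point), and your alternative trace-of-$\text{Rm}$ argument is a valid independent check.
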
 
\begin{mycor}
In the arclength coordinate frame, the sectional curvatures can be written explicitly as:
\begin{enumerate}
\item $K_{0i}= \frac{1}{4}((g^{ii}\partial_s g_{ii})^2-2g^{ii}\partial_{ss}g_{ii})$,
\item $K_{ij}= -\frac{1}{4}
g^{ii}g^{jj}\partial_s g_{ii} \partial_s g_{jj}+\hat{K}_{ij}$ where $\hat{K}_{ij}=-g^{ii}g^{jj}(
g^{kk}(g_{kk}^2 -(g_{ii}-g_{jj})^2)+(g_{kk}-g_{jj}-g_{ii}))$ for $k \neq i,j$ is the sectional curvature of the fibre metric  $\hat{g}_s$ spanned by $E_i$ and $E_j$. \hfill $\blacksquare$
\end{enumerate}

\end{mycor}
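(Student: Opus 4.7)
The corollary is a direct specialisation of Proposition \ref{seccurve} to the arclength frame, so the plan is simply to justify why the relevant substitutions are legitimate and to carry them out. First, I would recall that the arclength parameter is defined by $s(z) = \int_0^z \phi(w)\,dw$, so $ds = \phi\,dz$ and $\partial_s = \phi^{-1}\partial_z$. In the new frame $\{\partial_s = E_0, E_1, E_2, E_3\}$, the $00$-component of the metric is
\[
g_{00} = g(\partial_s, \partial_s) = \phi^{-2}\, g(\partial_z, \partial_z) = \phi^{-2}\cdot \phi^2 = 1,
\]
and hence $g^{00} = 1$ and, crucially, $\partial_s g_{00} = 0$ identically on $S^1$. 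Moreover, since $\partial_s$ only rescales $\partial_z$ by a function of $z$ alone, the vertical frame $\{E_1, E_2, E_3\}$ remains orthogonal to $E_0$ and the $g_{ii}$ (for $i=1,2,3$) are unchanged as functions on the base.

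With these observations the computation is purely algebraic: take the formulas from Proposition \ref{seccurve}, replace every occurrence of $g^{00}$ by $1$, and kill every term containing $\partial_z g_{00}$. For $K_{0i}$, the mixed term $g^{00}g^{ii}\partial_z g_{00}\partial_z g_{ii}$ drops out because $\partial_s g_{00}=0$, leaving
\[
K_{0i} = \tfrac{1}{4}\bigl(g^{ii}(\partial_s g_{ii})^2 - 2\partial_{ss}g_{ii}\bigr),
\]
after also absorbing the outer $g^{ii}$ into the bracket as indicated. For $K_{ij}$, the factor $g^{00}$ in front of the derivative cross-term becomes $1$, while the purely vertical contribution $\hat{K}_{ij}$ involves no $g_{00}$ and so is transcribed verbatim from Proposition \ref{seccurve}. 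There is no genuine obstacle here; the only point requiring a word of caution is that $\partial_{ss}g_{ii}$ in the arclength frame differs from $\partial_{zz}g_{ii}$ by a factor and a lower-order term, but since $g_{00}\equiv 1$ all such correction terms are already absorbed into the formula one obtains by the direct substitution.
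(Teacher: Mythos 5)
Your proposal is correct and follows the same route as the paper: the corollary is obtained by specialising Proposition \ref{seccurve} to the arclength coordinate, where $g_{00}\equiv 1$ forces $g^{00}=1$ and $\partial_s g_{00}=0$, so the $\partial_z g_{00}$ term in $K_{0i}$ vanishes and the outer $g^{ii}$ distributes to give the stated form, while $K_{ij}$ and $\hat K_{ij}$ carry over verbatim. The closing caveat about relating $\partial_{ss}$ to $\partial_{zz}$ is unnecessary — Proposition \ref{seccurve} holds for an arbitrary base coordinate, so one simply reads it in the coordinate $s$ (where $\phi\equiv 1$) rather than attempting a change-of-variables of the $z$-formulas — but this does not affect the validity of the argument.
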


Explicitly, in the arclength coordinate frame $\{\partial_s = E_0,E_1,E_2,E_3\}$, we can calculate the Ricci tensor components of the metric in (\ref{ansatzmetric}) to get: 
\begin{align}
\text{Ric}_{00}&=-\left(\frac{a''}{a}+\frac{b''}{b}+\frac{c''}{c} \right), \label{ric1}
\\ \text{Ric}_{11}&=-aa''-aa'\left(\frac{b'}{b}+\frac{c'}{c} \right) + a^2(\hat{K}_{12}+\hat{K}_{13}),
\\ \text{Ric}_{22}&=-bb''-bb'\left(\frac{a'}{a}+\frac{c'}{c} \right) + b^2(\hat{K}_{12}+\hat{K}_{23}),
\\ \text{Ric}_{33}&=-cc''-cc'\left(\frac{a'}{a}+\frac{b'}{b} \right) + c^2(\hat{K}_{13}+\hat{K}_{23}),
\\ \text{Ric}_{\alpha \beta} &= 0 \quad \text{for} \quad \alpha \neq \beta, \label{ric5}
\end{align}
where:
\begin{align*}
\hat{K}_{12} &= \frac{(a^2-b^2)^2-3c^4}{(abc)^2}+\frac{2}{a^2}+\frac{2}{b^2},
\\ \hat{K}_{13} &=\frac{(a^2-c^2)^2-3b^4}{(abc)^2}+\frac{2}{a^2}+\frac{2}{c^2},
\\ \hat{K}_{23} &=\frac{(b^2-c^2)^2-3a^4}{(abc)^2}+\frac{2}{b^2}+\frac{2}{c^2},
\end{align*}
are the sectional curvature of the $\text{SU}(2)$ fibres. These were calculated in Proposition \ref{seccurve}. 

The $\ldots'$ denotes derivative with respect to the arclength variable $s(z,t)$  via the change of variable $ds = \phi(z,t) \, dz$. From now on, we denote derivatives with respect to the original variable $z$ and the new gauge $s(z,t)$ by $\dot{\ldots}$ and $\ldots'$ respectively. 

\begin{myrem} Note that by choosing this gauge, the new space variable $s$ depends on both the original space and time variables. Furthermore, from the relation $ds = \phi(z,t) dz$, we have the chain rule identity $\frac{\partial}{\partial s}=\frac{1}{\phi(z,t)}\frac{\partial}{\partial z}$. Thus, the derivatives in the $t$ direction and $s$ direction do not commute as the original variables $z$ and $t$ do. Therefore, we have the following commutator relation:
\begin{equation} \label{commutator}
[\partial_t,\partial_s]=-\frac{\partial_t \phi}{\phi} \, \partial_s.
\end{equation}

From now on, keeping this in mind, we suppress the dependence of the variable $s$ on the variables $z$ and $t$. \hfill $\blacksquare$
\end{myrem}

The sectional curvatures of the manifold $B \times S^3$ are given by:
\begin{align*}
K_{01}&=-\frac{a''}{a}
\\  K_{02} &=-\frac{b''}{b}, 
\\  K_{03} &=-\frac{c''}{c},
\\ K_{12} &=-\frac{a'b'}{ab}+\frac{(a^2-b^2)^2-3c^4}{(abc)^2}+\frac{2}{a^2}+\frac{2}{b^2}=-\frac{a'b'}{ab}+\hat{K}_{12},
\\ K_{13} &=-\frac{a'c'}{ac}+\frac{(a^2-c^2)^2-3b^4}{(abc)^2}+\frac{2}{a^2}+\frac{2}{c^2}=-\frac{a'c'}{ac}+\hat{K}_{13},
\\ K_{23} &=-\frac{b'c'}{bc}+\frac{(b^2-c^2)^2-3a^4}{(abc)^2}+\frac{2}{b^2}+\frac{2}{c^2}=-\frac{b'c'}{bc}+\hat{K}_{23}.
\end{align*}

Since the off-diagonal terms in the Ricci tensor vanish identically, the Ricci flow equation preserves the form of the initial metric; that is, for all time for which the solution exists, the metric $g(t)$ would be of the form (\ref{ansatzmetric}) for some positive functions $\phi,a,b$, and $c$ which are all functions of $z$ and $t$. Equating the first component in the Ricci flow equation gives us:
\begin{align} 
\partial_t (\phi^2)&=2\left(\frac{a''}{a} +\frac{b''}{b} + \frac{c''}{c} \right)\phi^2 \nonumber
\\ \Rightarrow \quad \partial_t(\log \phi)&=\frac{a''}{a}+\frac{b''}{b}+\frac{c''}{c}=-(K_{01}+K_{02}+K_{03}),\label{arclengthvalue}
\end{align}
and thus $[\partial_t,\partial_s]=(K_{01}+K_{02}+K_{03}) \, \partial_s$. 

The remaining equations are:
\begin{align*} 
\partial_t a &= a''+a'\left(\frac{b'}{b}+\frac{c'}{c} \right) - a(\hat{K}_{12}+\hat{K}_{13}),
\\ \partial_t b &= b''+b'\left(\frac{a'}{a}+\frac{c'}{c} \right) - b(\hat{K}_{12}+\hat{K}_{23}), 
\\ \partial_t c &= c''+c'\left(\frac{a'}{a}+\frac{b'}{b} \right) - c(\hat{K}_{13}+\hat{K}_{23}).
\end{align*}

In the arclength coordinate $s$, the Ricci flow is a semilinear parabolic system of equations:
\begin{align} \label{riccipde1}
\partial_t a &= a''+a'\left(\frac{b'}{b}+\frac{c'}{c} \right) - 2a\left(\frac{a^4-(b^2-c^2)^2}{(abc)^2}\right),
\\ \partial_t b &= b''+b'\left(\frac{a'}{a}+\frac{c'}{c} \right) - 2b\left(\frac{b^4-(a^2-c^2)^2}{(abc)^2}\right), \label{riccipde2}
\\ \partial_t c &= c''+c'\left(\frac{a'}{a}+\frac{b'}{b} \right) - 2c\left(\frac{c^4-(a^2-b^2)^2}{(abc)^2}\right). \label{riccipde3}
\end{align}

This system, along with the commutator relation $ [\partial_t,\partial_s]=-\left( \frac{a''}{a}+\frac{b''}{b}+\frac{c''}{c} \right) \partial_s$, will be the system of equations that we will analyse in the next few chapters. Furthermore, from equations (\ref{ric1})-(\ref{ric5}), we can calculate the scalar curvature $\text{S}$ for $g$, which will be useful later:
\begin{align} \text{S}&=2(K_{01}+K_{02}+K_{03}+K_{12}+K_{13}+K_{23}) \nonumber
\\&=2\left(-\frac{a''}{a}-\frac{b''}{b}-\frac{c''}{c}-\frac{a'b'}{ab}-\frac{a'c'}{ac}-\frac{b'c'}{bc}+\frac{2a^2b^2+2a^2c^2+2b^2c^2-a^4-b^4-c^4}{a^2b^2c^2}\right).\label{scal}
\end{align}

Under the Ricci flow, the scalar curvature evolves according to:
\begin{equation} \label{scalarevolution} \partial_t \text{S} = \Delta \text{S} + 2 |\text{Ric}|^2_g \geq \Delta \text{S} +\frac{2}{n}\text{S}^2. \end{equation}

To help us with the analysis, the evolution of the some derived quantities are also considered.
We first consider the quantity $\xi := a-b$. From equations (\ref{riccipde1}) and (\ref{riccipde2}), we can derive the evolution equation for $\xi$, which is given by:
$$\partial_t \xi =\xi''+\frac{c'}{c} \xi'+\left(\frac{a'b'}{ab}-\frac{2(a^4+b^4-c^4)}{a^2b^2c^2} -\frac{4(a^2+b^2-c^2)}{abc^2}-\frac{4}{c^2} \right)\xi. $$

Similarly, the quantities $\zeta:=b-c$ and $\chi:=a-c$ satisfy the equations:
\begin{align*}
\partial_t \zeta &= \zeta'' +\frac{a'}{a}\zeta'+\left(\frac{b'c'}{bc}-\frac{2(b^4+c^4-a^4)}{a^2b^2c^2} - \frac{4(b^2+c^2-a^2)}{a^2bc}-\frac{4}{a^2} \right)\zeta,
\\ \partial_t \chi &= \chi'' +\frac{b'}{b}\chi'+\left(\frac{a'c'}{ac}-\frac{2(a^4+c^4-b^4)}{a^2b^2c^2} - \frac{4(a^2+c^2-b^2)}{ab^2c}-\frac{4}{b^2} \right)\chi.
\end{align*}

In the vein the analysis in \cite{IKS}, we consider the quantities $\frac{\xi}{a}=\frac{a-b}{a}$ and $\frac{\xi}{b}=\frac{a-b}{b}$. These quantities can be thought of the measure for eccentricity of the fibre at each $z \in B$. These eccentricity quantities evolve according to the PDEs:
\begin{align}
\partial_t \left(\frac{\xi}{a} \right)&=\left(\frac{\xi}{a} \right)''+\left(2\frac{a'}{a}+\frac{c'}{c}\right)\left(\frac{\xi}{a} \right)'-4\left(\frac{1}{c^2}-\frac{1}{a^2}+\frac{b^2}{a^2c^2}+\frac{a^2+b^2-c^2}{abc^2}\right)\left(\frac{\xi}{a} \right), \label{ecc1}
\\ \partial_t \left(\frac{\xi}{b} \right)&=\left(\frac{\xi}{b} \right)''+\left(2\frac{b'}{b}+\frac{c'}{c}\right)\left(\frac{\xi}{b} \right)'-4\left(\frac{1}{c^2}-\frac{1}{b^2}+\frac{a^2}{b^2c^2}+\frac{a^2+b^2-c^2}{abc^2}\right)\left(\frac{\xi}{b} \right). \label{ecc2}
\end{align}

Furthermore, the other eccentricity quantities, defined by  $\frac{\zeta}{b}=\frac{b-c}{b}, \frac{\zeta}{c}=\frac{b-c}{c}, \frac{\chi}{a}=\frac{a-c}{a}$, and $\frac{\chi}{c}=\frac{a-c}{c}$ evolve according to the following PDEs:
\begin{align}
\partial_t \left(\frac{\zeta}{b} \right)&=\left(\frac{\zeta}{b} \right)''+\left(2\frac{b'}{b}+\frac{a'}{a}\right)\left(\frac{\zeta}{b} \right)'-4\left(\frac{1}{a^2}-\frac{1}{b^2}+\frac{c^2}{a^2b^2}+\frac{b^2+c^2-a^2}{a^2bc}\right)\left(\frac{\zeta}{b} \right), \label{ecc3}
\\ \partial_t \left(\frac{\zeta}{c} \right)&=\left(\frac{\zeta}{c} \right)''+ \left(2\frac{c'}{c}+\frac{a'}{a}\right)\left(\frac{\zeta}{c} \right)'-4\left(\frac{1}{a^2}-\frac{1}{c^2}+\frac{b^2}{a^2c^2}+\frac{b^2+c^2-a^2}{a^2bc}\right)\left(\frac{\zeta}{c} \right), \label{ecc4}
\\ \partial_t \left(\frac{\chi}{a} \right)&=\left(\frac{\chi}{a} \right)''+\left(2\frac{a'}{a}+\frac{b'}{b}\right)\left(\frac{\chi}{a} \right)'-4\left(\frac{1}{b^2}-\frac{1}{a^2}+\frac{c^2}{a^2b^2}+\frac{a^2+c^2-b^2}{ab^2c}\right)\left(\frac{\chi}{a} \right), \label{ecc5}
\\ \partial_t \left(\frac{\chi}{c} \right)&=\left(\frac{\chi}{c} \right)''+\left(2\frac{a'}{a}+\frac{c'}{c}\right)\left(\frac{\chi}{c} \right)'-4\left(\frac{1}{b^2}-\frac{1}{c^2}+\frac{a^2}{b^2c^2}+\frac{a^2+c^2-b^2}{ab^2c}\right)\left(\frac{\chi}{c} \right). \label{ecc6}
\end{align}



To calculate the evolution of the curvatures, we need to find the evolution equations of the first and second derivatives of the metric components. For the first derivatives, by using the commutator relation (\ref{commutator}) to swap the order of the time and space derivatives $\partial_t$ and $\partial_s$, we derive the following evolution equations:
\begin{align}
\partial_t(a')&=(\partial_t a)'+[\partial_t,\partial_s]a'=(\partial_t a)'-\left(\frac{a''}{a}+\frac{b''}{b}+\frac{c''}{c} \right)a' \nonumber
\\ &= a''' +a''\left(\frac{b'}{b}+\frac{c'}{c}-\frac{a'}{a} \right)-a'\left(\frac{(b')^2}{b^2}+\frac{(c')^2}{c^2}+\frac{6a^4+2(b^2-c^2)^2}{(abc)^2} \right) \nonumber
\\ & \qquad +4 \left(\frac{a^3b'}{b^3c^2}+\frac{a^3c'}{b^2c^3}-\frac{c^2b'}{ab^3}-\frac{b^2c'}{ac^3}+\frac{cc'}{ab^2}+\frac{bb'}{ac^2}\right), \label{aprime}
\\ \partial_t(b')&= b''' +b''\left(\frac{a'}{a}+\frac{c'}{c}-\frac{b'}{b} \right)-b'\left(\frac{(a')^2}{a^2}+\frac{(c')^2}{c^2}+\frac{6b^4+2(a^2-c^2)^2}{(abc)^2} \right) \nonumber
\\ & \qquad +4\left(\frac{b^3a'}{a^3c^2}+\frac{b^3c'}{a^2c^3}-\frac{c^2a'}{ba^3}-\frac{a^2c'}{bc^3}+\frac{cc'}{ba^2}+\frac{aa'}{bc^2}\right), \label{bprime}
\\ \partial_t(c')&= c''' +c''\left(\frac{a'}{a}+\frac{b'}{b}-\frac{c'}{c} \right)-c'\left(\frac{(a')^2}{a^2}+\frac{(b')^2}{b^2}+\frac{6c^4+2(a^2-b^2)^2}{(abc)^2} \right) \nonumber
\\ & \qquad +4\left(\frac{c^3a'}{a^3b^2}+\frac{c^3b'}{a^2b^3}-\frac{b^2a'}{ca^3}-\frac{a^2b'}{cb^3}+\frac{aa'}{cb^2}+\frac{bb'}{ca^2}\right), \label{cprime}
\end{align}

Furthermore, we have:
\begin{align}
\partial_t \left(\frac{(a')^2}{a^2} \right)&=\Delta_g \left(\frac{(a')^2}{a^2} \right)-2\frac{(a')^2}{a^2}\left(2\frac{(a')^2}{a^2}+ \frac{(b')^2}{b^2}+\frac{(c')^2}{c^2}+\frac{4(a^4+(b^2-c^2)^2)}{a^2b^2c^2}\right) \nonumber
\\ &\qquad +8\frac{a'}{a^2}\left(\frac{a^3b'}{b^3c^2}+\frac{a^3c'}{b^2c^3}-\frac{c^2b'}{ab^3}-\frac{b^2c'}{ac^3}+\frac{cc'}{ab^2}+\frac{bb'}{ac^2}\right)-2K_{01}^2 -4\frac{(a')^2}{a^2}K_{01},\label{aprimeasquared}
\\ \partial_t \left(\frac{(b')^2}{b^2} \right)&=\Delta_g \left(\frac{(b')^2}{b^2} \right)-2\frac{(b')^2}{b^2}\left(2\frac{(b')^2}{b^2}+ \frac{(a')^2}{a^2}+\frac{(c')^2}{c^2}+\frac{4(b^4+(a^2-c^2)^2)}{a^2b^2c^2}\right) \nonumber
\\ &\qquad +8\frac{b'}{b^2}\left(\frac{b^3a'}{a^3c^2}+\frac{b^3c'}{a^2c^3}-\frac{c^2a'}{ba^3}-\frac{a^2c'}{bc^3}+\frac{cc'}{ba^2}+\frac{aa'}{bc^2}\right)-2K_{02}^2 -4\frac{(b')^2}{b^2}K_{02}, \label{bprimebsquared}
\\ \partial_t \left(\frac{(c')^2}{c^2} \right)&=\Delta_g \left(\frac{(c')^2}{c^2} \right)-2\frac{(c')^2}{c^2}\left(2\frac{(c')^2}{c^2}+ \frac{(a')^2}{a^2}+\frac{(b')^2}{b^2}+\frac{4(c^4+(a^2-b^2)^2)}{a^2b^2c^2}\right) \nonumber
\\ &\qquad +8\frac{c'}{c^2}\left(\frac{c^3a'}{a^3b^2}+\frac{c^3b'}{a^2b^3}-\frac{b^2a'}{ca^3}-\frac{a^2b'}{cb^3}+\frac{aa'}{cb^2}+\frac{bb'}{ca^2}\right)-2K_{03}^2 -4\frac{(c')^2}{c^2}K_{03}, \label{cprimecsquared}
\end{align}
where $\Delta_g$ is the Laplacian operator, which is defined as: 
$$\Delta_g f= f''+\left(\frac{a'}{a}+\frac{b'}{b}+\frac{c'}{c} \right)f',$$ for any $f \in C^2(I)$. As for the second derivatives, we can calculate:
\begin{align*}
\partial_t(a'')&=a''''+a'''\left(\frac{b'}{b}+\frac{c'}{c}-\frac{a'}{a} \right) +a''\left(\frac{(a')^2}{a^2}-2\frac{(b')^2}{b^2}-2\frac{(c')^2}{c^2}-2\frac{a''}{a}- \frac{6a^4+2(b^2-c^2)^2}{(abc)^2}\right)
\\ & \qquad -2a'\bigg(\frac{b'b''}{b^2}-\frac{(b')^3}{b^3}+\frac{c'c''}{c^2}-\frac{(c')^3}{c^3}+\frac{6aa'}{b^2c^2}+\frac{4bb'}{a^2c^2}+\frac{4cc'}{a^2b^2}-\frac{2a'c^2}{a^3b^2}-\frac{2a'b^2}{a^3c^2}+\frac{4a'}{a^3} 
\\ &\qquad -\frac{12a^2b'}{b^3c^2}-\frac{12a^2c'}{b^2c^3}-\frac{4b^2c'}{a^2c^3}-\frac{4c^2b'}{a^2b^3}\bigg)+4a^3\left(\frac{b''}{b^3c^2}-\frac{3(b')^2}{b^4c^2}-\frac{4b'c'}{b^3c^3}+\frac{c''}{b^2c^3}-\frac{3(c')^2}{b^2c^4} \right) 
\\ &\qquad +4\left(\frac{(c')^2+cc''}{ab^2}+\frac{(b')^2+bb''}{ac^2}+\frac{3(b')^2c^2}{ab^4}+\frac{3b^2(c')^2}{ac^4}-\frac{4cc'b'+c^2b''}{ab^3}-\frac{4bb'c'+b^2c''}{ac^3}\right),
\\ \partial_t(b'')&=b''''+b'''\left(\frac{a'}{a}+\frac{c'}{c}-\frac{b'}{b} \right) +b''\left(\frac{(b')^2}{b^2}-2\frac{(a')^2}{a^2}-2\frac{(c')^2}{c^2}-2\frac{b''}{b}- \frac{6b^4+2(a^2-c^2)^2}{(abc)^2}\right)
\\ & \qquad -2b'\bigg(\frac{a'a''}{a^2}-\frac{(a')^3}{a^3}+\frac{c'c''}{c^2}-\frac{(c')^3}{c^3}+\frac{6bb'}{a^2c^2}+\frac{4aa'}{b^2c^2}+\frac{4cc'}{a^2b^2}-\frac{2b'c^2}{b^3a^2}-\frac{2b'a^2}{b^3c^2}+\frac{4b'}{b^3} 
\\ & \qquad -\frac{12b^2a'}{a^3c^2}-\frac{12b^2c'}{a^2c^3}-\frac{4a^2c'}{b^2c^3}-\frac{4c^2a'}{b^2a^3}\bigg)+4b^3\left(\frac{a''}{a^3c^2}-\frac{3(a')^2}{a^4c^2}-\frac{4a'c'}{a^3c^3}+\frac{c''}{a^2c^3}-\frac{3(c')^2}{a^2c^4} \right) 
\\ & \qquad +4\left(\frac{(c')^2+cc''}{ba^2}+\frac{(a')^2+aa''}{bc^2}+\frac{3(a')^2c^2}{ba^4}+\frac{3a^2(c')^2}{bc^4}-\frac{4cc'a'+c^2a''}{ba^3}-\frac{4aa'c'+a^2c''}{bc^3}\right),
\\ \partial_t(c'')&=c''''+c'''\left(\frac{a'}{a}+\frac{b'}{b}-\frac{c'}{c} \right) +c''\left(\frac{(c')^2}{c^2}-2\frac{(a')^2}{a^2}-2\frac{(b')^2}{b^2}-2\frac{c''}{c}- \frac{6c^4+2(a^2-b^2)^2}{(abc)^2}\right)
\\ & \qquad -2c'\bigg(\frac{a'a''}{a^2}-\frac{(a')^3}{a^3}+\frac{b'b''}{b^2}-\frac{(b')^3}{b^3}+\frac{6cc'}{a^2b^2}+\frac{4aa'}{b^2c^2}+\frac{4bb'}{a^2b^2}-\frac{2c'b^2}{c^3a^2}-\frac{2c'a^2}{c^3b^2}+\frac{4c'}{c^3} 
\\ & \qquad -\frac{12c^2a'}{a^3b^2}-\frac{12c^2b'}{a^2b^3}-\frac{4b^2a'}{c^2a^3}-\frac{4a^2b'}{c^2b^3}\bigg)+4c^3\left(\frac{a''}{a^3b^2}-\frac{3(a')^2}{a^4b^2}-\frac{4a'b'}{a^3b^3}+\frac{b''}{a^2b^3}-\frac{3(b')^2}{a^2b^4} \right) 
\\ &\qquad +4\left(\frac{(b')^2+bb''}{ca^2}+\frac{(a')^2+aa''}{cb^2}+\frac{3a^2(b')^2}{cb^4}+\frac{3(a')^2b^2}{ca^4}-\frac{4aa'b'+a^2b''}{cb^3}-\frac{4bb'a'+b^2a''}{ca^3}\right).
\end{align*}

Thus, from the above equations, the sectional curvatures $K_{01},K_{02}$, and $K_{03}$ evolve according to the following PDEs:
\begin{align}
\partial_t K_{01}&=\partial_t\left(-\frac{a''}{a} \right)=-\frac{\partial_t(a'')}{a}+\frac{a''\partial_t a}{a^2} \nonumber
\\&=\Delta_g K_{01}+2K_{01}^2-2K_{01}\left( \frac{(b')^2}{b^2}+\frac{(c')^2}{c^2}+\frac{2a^4+2(b^2-c^2)^2}{(abc)^2}\right) \nonumber
\\ &\qquad +2K_{02}\left(\frac{2a^2}{b^2c^2}+\frac{2b^2}{a^2c^2}-\frac{2c^2}{a^2b^2}-\frac{a'b'}{ab} \right)+2K_{03}\left(\frac{2a^2}{b^2c^2}+\frac{2c^2}{a^2b^2}-\frac{2b^2}{a^2c^2}-\frac{a'c'}{ac} \right) \nonumber
\\ & \qquad +2\frac{a'}{a}\bigg(-\frac{(b')^3}{b^3}-\frac{(c')^3}{c^3}+\frac{6aa'}{b^2c^2}+\frac{4bb'}{a^2c^2}+\frac{4cc'}{a^2b^2}-\frac{2a'c^2}{a^3b^2}-\frac{2a'b^2}{a^3c^2}+\frac{4a'}{a^3}  \nonumber
\\ &\qquad -\frac{12a^2b'}{b^3c^2}-\frac{12a^2c'}{b^2c^3}-\frac{4b^2c'}{a^2c^3}-\frac{4c^2b'}{a^2b^3}\bigg)+4a^2\left(\frac{3(b')^2}{b^4c^2}+\frac{4b'c'}{b^3c^3}+\frac{3(c')^2}{b^2c^4} \right) \nonumber
\\ &\qquad -\frac{4}{a}\left(\frac{(c')^2}{ab^2}+\frac{(b')^2}{ac^2}+\frac{3(b')^2c^2}{ab^4}+\frac{3b^2(c')^2}{ac^4}-\frac{4cc'b'}{ab^3}-\frac{4bb'c'}{ac^3}\right), \label{K01eq}
\\ \partial_t K_{02}&=\Delta_g K_{02}+2K_{02}^2-2K_{02}\left( \frac{(a')^2}{a^2}+\frac{(c')^2}{c^2}+\frac{2b^4+2(a^2-c^2)^2}{(abc)^2}\right) \nonumber
\\ &\qquad +2K_{01}\left(\frac{2a^2}{b^2c^2}+\frac{2b^2}{a^2c^2}-\frac{2c^2}{a^2b^2}-\frac{a'b'}{ab} \right)+2K_{03}\left(\frac{2b^2}{a^2c^2}+\frac{2c^2}{a^2b^2}-\frac{2a^2}{b^2c^2}-\frac{b'c'}{bc} \right) \nonumber
\\ & \qquad +2\frac{b'}{b}\bigg(-\frac{(a')^3}{a^3}-\frac{(c')^3}{c^3}+\frac{6bb'}{a^2c^2}+\frac{4aa'}{b^2c^2}+\frac{4cc'}{a^2b^2}-\frac{2b'c^2}{b^3a^2}-\frac{2b'a^2}{b^3c^2}+\frac{4b'}{b^3}  \nonumber
\\ & \qquad -\frac{12b^2a'}{a^3c^2}-\frac{12b^2c'}{a^2c^3}-\frac{4a^2c'}{b^2c^3}-\frac{4c^2a'}{b^2a^3}\bigg)+4b^2\left(\frac{3(a')^2}{a^4c^2}+\frac{4a'c'}{a^3c^3}+\frac{3(c')^2}{a^2c^4} \right) \nonumber
\\ & \qquad -\frac{4}{b}\left(\frac{(c')^2}{ba^2}+\frac{(a')^2}{bc^2}+\frac{3(a')^2c^2}{ba^4}+\frac{3a^2(c')^2}{bc^4}-\frac{4cc'a'}{ba^3}-\frac{4aa'c'}{bc^3}\right), \label{K02eq}
\\ \partial_t K_{03}&=\Delta_g K_{03}+2K_{03}^2-2K_{03}\left( \frac{(a')^2}{a^2}+\frac{(b')^2}{b^2}+\frac{2c^4+2(a^2-b^2)^2}{(abc)^2}\right) \nonumber
\\ &\qquad +2K_{01}\left(\frac{2a^2}{b^2c^2}+\frac{2c^2}{a^2b^2}-\frac{2b^2}{a^2c^2}-\frac{a'c'}{ac} \right)+2K_{02}\left(\frac{2b^2}{a^2c^2}+\frac{2c^2}{a^2b^2}-\frac{2a^2}{b^2c^2}-\frac{b'c'}{bc} \right) \nonumber
\\ & \qquad +2\frac{c'}{c}\bigg(-\frac{(a')^3}{a^3}-\frac{(b')^3}{b^3}+\frac{6cc'}{a^2b^2}+\frac{4aa'}{b^2c^2}+\frac{4bb'}{a^2b^2}-\frac{2c'b^2}{c^3a^2}-\frac{2c'a^2}{c^3b^2}+\frac{4c'}{c^3} \nonumber
\\ & \qquad -\frac{12c^2a'}{a^3b^2}-\frac{12c^2b'}{a^2b^3}-\frac{4b^2a'}{c^2a^3}-\frac{4a^2b'}{c^2b^3}\bigg)+4c^2\left(\frac{3(a')^2}{a^4b^2}+\frac{4a'b'}{a^3b^3}+\frac{3(b')^2}{a^2b^4} \right) \nonumber
\\ &\qquad -\frac{4}{c}\left(\frac{(b')^2}{ca^2}+\frac{(a')^2}{cb^2}+\frac{3a^2(b')^2}{cb^4}+\frac{3(a')^2b^2}{ca^4}-\frac{4aa'b'}{cb^3}-\frac{4bb'a'}{ca^3}\right). \label{K03eq}
\end{align}

\section{Maximum Principles}
All of the equations computed in the previous section are of parabolic type. In order to study the behaviour of the metric components and the various other quantities computed earlier under the Ricci flow, we require some tools from parabolic theory, namely the maximum principles.  Let $I$ be a bounded open interval of $\mathbb{R}$ and define the initial-boundary value problem for a linear parabolic equation:
\begin{align} 
\mathcal{P}(u) = \partial_t u  - u''-f(x,t) u' -g(x,t)u&=0 \quad \text{on} \quad I\times [0,T) \subset \mathbb{R} \times \mathbb{R}_+, \label{pareqn1}
\\ u|_{I \times \{0\}}(x)&=v(x), \label{pareqn2}
\\ u|_{\partial I \times [0,T)}&=w(x,t). \label{pareqn3}
\end{align}

First, we define the parabolic boundary for the domain of the solution as follows:
\begin{mydef}[Parabolic domain, closure, interior, and boundary]
Suppose that the solution of the parabolic problem (\ref{pareqn1})-(\ref{pareqn3}) exists up to a time $T >0$. For a given $\tau \leq T$, the parabolic domain $I_\tau$ for the equation is given by the set $I \times [0,\tau)$. The closure and interior of $I_
\tau$ are defined as $\bar{I}_\tau=\bar{I} \times [0,\tau]$ and $\mathring{I}_\tau=\mathring{I} \times (0,\tau)$ respectively. Furthermore, the parabolic boundary of $I_\tau$   is given by the set $\mathscr{P} I_\tau = I \times \{0\} \cup \partial I \times [0,\tau)$. \hfill $\blacksquare$
 \end{mydef}

We now state the parabolic maximum principles that we are going to employ in the proofs of the results in this work.


\begin{mythm}[Maximum Principle I]\label{maximumprinciple1} \cite{ELC} Suppose that $I$ is a bounded and connected open domain in $\mathbb{R}$. Let $\mathcal{P}(u)=\partial_t u  - u''-f(x,t) u' -g(x,t)u$ be a parabolic operator such that $f,g: I \times \mathbb{R}_+ \rightarrow \mathbb{R}$ are some functions with $g(x,t) \leq 0$ in $I_T$ where $T>0$. Suppose that $u \in C^{2,1}(I_T) \cap C^{0,0}(\bar{I}_T)$. Then:
\begin{enumerate}
\item If $u$ satisfies the parabolic inequality $\mathcal{P}(u) \leq 0$, then we have $\displaystyle \max_{I_T}(u)=\max_{\mathscr{P}I_T}(u^+)$ where $u^+=\max(u,0)$.
\item If $u$ satisfies the parabolic inequality $\mathcal{P}(u) \geq 0$, then we have $\displaystyle \min_{I_T}(u)\geq -\min_{\mathscr{P}I_T}(u^-)$ where $u^-=-\min(u,0)$.
\item In particular, if $\mathcal{P}(u)=0$, then we have $\displaystyle \max_{I_T}(|u|)=\max_{\mathscr{P}I_T}(|u|)$ and consequently $\displaystyle |u| \leq \max_{\mathscr{P}I_T}(|u|)$. \hfill $\blacksquare$
\end{enumerate}  
\end{mythm}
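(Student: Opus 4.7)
The plan is to follow the classical three-step argument from parabolic PDE theory: prove part (1) first, then deduce (2) and (3) by sign reversal.

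First I would establish a strengthened version of (1) under the strict inequality $\mathcal{P}(u) < 0$ on $\mathring{I}_T$, arguing by contradiction. Suppose $u$ attains a positive maximum at some $(x_0, t_0)$ with $x_0 \in I$ and $t_0 \in (0, T]$. At such a point the standard first- and second-order necessary conditions for a spatial extremum give $u'(x_0, t_0) = 0$ and $u''(x_0, t_0) \leq 0$, while $\partial_t u(x_0, t_0) \geq 0$ (as an equality for $t_0 < T$ by interior extremality, and via the usual forward-difference argument for $t_0 = T$). Combining these with $g(x_0, t_0) \leq 0$ and $u(x_0, t_0) > 0$ would yield
\begin{equation*}
\mathcal{P}(u)(x_0, t_0) = \partial_t u - u'' - f u' - g u \geq 0,
\end{equation*}
contradicting $\mathcal{P}(u) < 0$. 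Hence any positive maximum must be attained on the parabolic boundary $\mathscr{P}I_T$.

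Next I would reduce the weak inequality $\mathcal{P}(u) \leq 0$ to the strict case via the linear perturbation $v_\varepsilon := u - \varepsilon t$ for $\varepsilon > 0$. A direct computation produces $\mathcal{P}(v_\varepsilon) = \mathcal{P}(u) - \varepsilon + \varepsilon t\, g$; here the sign hypothesis on $g$ plays its decisive role, since $g \leq 0$ and $t \geq 0$ force $\varepsilon t g \leq 0$, giving $\mathcal{P}(v_\varepsilon) \leq -\varepsilon < 0$. Applying the strict version to $v_\varepsilon$ and using $v_\varepsilon \leq u \leq u^+$ on $\mathscr{P}I_T$ yields the estimate $\max_{\bar{I}_T}(u) \leq \max_{\mathscr{P}I_T}(u^+) + \varepsilon T$, and then sending $\varepsilon \to 0^+$ completes part (1).

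Finally, part (2) follows by applying part (1) to $-u$, exploiting the linearity of $\mathcal{P}$ and the identity $(-u)^+ = u^-$, and part (3) is immediate from combining (1) and (2). The argument is entirely classical, so there is no serious obstacle; the only delicate point is the role of the sign condition $g \leq 0$, which is precisely what allows the perturbation term $\varepsilon t g$ to be absorbed with the correct sign. Without this hypothesis the principle genuinely fails, as illustrated already by the ODE $\partial_t u = u$ with positive initial data, whose solution grows strictly above its starting value.
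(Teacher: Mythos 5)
Your proof is correct and reproduces the classical argument from Evans (the paper's cited reference); the paper itself does not prove this theorem but simply cites it. The three-step structure you use — strict-inequality case by contradiction at a positive interior maximum, the $\varepsilon t$-perturbation $v_\varepsilon = u - \varepsilon t$ to reduce the weak inequality to the strict one, sign reversal for part (2), and combination for part (3) — is exactly the textbook route, and you have correctly isolated the role of the hypothesis $g \leq 0$ in forcing $\varepsilon t g \leq 0$ so that the perturbation strictly decreases $\mathcal{P}$.

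Two small remarks. First, since $u$ is assumed $C^{2,1}$ only on $I_T = I \times [0,T)$ and merely continuous on $\bar I_T$, the differential inequality cannot literally be evaluated at $t_0 = T$. The cleaner fix than a forward-difference argument at the final time is to run the strict-case argument on $\bar I \times [0, T']$ for each $T' < T$ (where $\partial_t u(x_0,T') \geq 0$ at a temporal endpoint maximum because $u$ is nondecreasing into it and $C^1$ there), and then let $T' \uparrow T$ using continuity of $u$ on $\bar I_T$. Second, your derivation actually yields the statements that are true: the inequality $\max_{I_T} u \leq \max_{\mathscr{P}I_T} u^+$ in part (1), and $\min_{I_T} u \geq -\max_{\mathscr{P}I_T}(u^-)$ in part (2) via the identity $(-u)^+ = u^-$. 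As printed, the paper's part (1) asserts an equality (which fails when $u<0$ throughout) and part (2) has $-\min_{\mathscr{P}I_T}(u^-)$, which would force $\min u \geq 0$ whenever $u$ is nonnegative anywhere on $\mathscr{P}I_T$; these are minor misstatements in the paper that your argument quietly corrects.
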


\begin{mythm}[Maximum Principle II] \cite{TP} \label{maximumprinciple2} Suppose that $I$ is a bounded open interval in $\mathbb{R}$ and $u \in C^{2,1}(I_T) \cap C^{0,0}(\bar{I}_T)$ satisfies the parabolic inequality: 
\begin{equation*}
\partial_t u  \leq u''+f(x,t) u' +g(u,t), 
\end{equation*}
 for some functions $f: I \times \mathbb{R}_+ \rightarrow \mathbb{R}$ and $g: \mathbb{R} \times \mathbb{R}_+ \rightarrow \mathbb{R}$. Assume that $g$ is locally Lipschitz in the $u$ variable and $u \leq C$ on $\mathscr{P}I_T$ where $T >0$ is the maximal time for which the solution exists. If $v$ is the solution of the associated ODE:
 \begin{align} \frac{dv}{dt} &= g(v,t),  \nonumber
\\ v(0)&=C, \nonumber \end{align}
then we have $u(x,t) \leq v(t)$ for all $(x,t) \in I_T$ for which $v$ exists. \hfill $\blacksquare$
\end{mythm}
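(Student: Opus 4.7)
The natural approach is to reduce the nonlinear comparison to a linear maximum principle by examining the difference $w(x,t) := u(x,t) - v(t)$. Since $v$ depends only on time, $w'' = u''$ and $w' = u'$, so subtracting the ODE for $v$ from the parabolic inequality for $u$ yields
\begin{equation*}
\partial_t w \leq w'' + f(x,t)\, w' + \bigl[g(u,t) - g(v,t)\bigr].
\end{equation*}
On the parabolic boundary $\mathscr{P}I_T$ we have $u \leq C = v(0) \leq v(t)$ (at $t=0$ by hypothesis, and on $\partial I \times [0,T)$ we can similarly assume the standing hypothesis $u \leq C \leq v$ in the spirit of the statement), so $w \leq 0$ on $\mathscr{P}I_T$.

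Next I would linearise the nonlinear term. Since $u$ is continuous on $\bar I_\tau$ for any $\tau$ less than both the PDE existence time and the ODE existence time, and $v$ is continuous on $[0,\tau]$, the image of $(u,t)$ and $(v,t)$ lies in some compact set $K \subset \mathbb{R} \times [0,\tau]$. Using the local Lipschitz hypothesis on $g$ over $K$ with constant $L = L(\tau)$, I can write
\begin{equation*}
g(u,t) - g(v,t) = h(x,t)\, w(x,t), \qquad |h(x,t)| \leq L,
\end{equation*}
for instance via $h(x,t) = \int_0^1 \partial_u g(\sigma u + (1-\sigma)v,\, t)\, d\sigma$ when $g$ is $C^1$, or by a quotient argument in general. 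Thus $w$ satisfies the linear inequality $\partial_t w \leq w'' + f w' + h w$ with bounded, but possibly positive, zero-order coefficient $h$.

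To put this into the form required by Theorem \ref{maximumprinciple1} (which needs a non-positive zero-order coefficient), I would introduce the standard exponential substitution $\tilde w(x,t) := e^{-Lt} w(x,t)$. A direct computation gives
\begin{equation*}
\partial_t \tilde w \leq \tilde w'' + f(x,t)\, \tilde w' + \bigl(h(x,t) - L\bigr)\tilde w,
\end{equation*}
and by construction $h - L \leq 0$ throughout $I_\tau$. Since $\tilde w \leq 0$ on $\mathscr{P}I_\tau$, Maximum Principle I applied to $-\tilde w$ (or equivalently the first part of that theorem with $\tilde w^+ \equiv 0$ on the parabolic boundary) gives $\tilde w \leq 0$ on all of $I_\tau$, hence $w \leq 0$ and $u \leq v$ on $I_\tau$. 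Because $\tau$ was arbitrary below the common existence time, the conclusion $u(x,t) \leq v(t)$ extends to all of $I_T$ for which $v$ exists.

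The main subtlety I would watch for is the behaviour of the Lipschitz constant $L$: if $g$ were only \emph{locally} Lipschitz and $u$ were a priori unbounded, the argument could break down as $\tau \nearrow T$. Here, however, the hypothesis $u \in C^{2,1}(I_T) \cap C^{0,0}(\bar I_T)$ together with the fact that $v$ solves an ODE on a finite interval guarantees that both are bounded on each $\bar I_\tau$, so the Lipschitz constant $L(\tau)$ is finite and the exponential trick goes through on each such slab. Exhausting $I_T$ by these slabs completes the proof.
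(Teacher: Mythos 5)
The paper does not actually prove Maximum Principle II; it is cited from Topping's lecture notes \cite{TP}, so there is no internal proof to compare against. Your blind proof follows the canonical argument: set $w = u - v$, subtract the ODE from the parabolic inequality, use the local Lipschitz property of $g$ to write $g(u,t) - g(v,t) = h(x,t)\,w$ with $|h| \leq L(\tau)$ on each compact slab $\bar I_\tau$, apply the exponential weighting $\tilde w = e^{-Lt} w$ to convert the possibly-positive zero-order coefficient $h$ into $h - L \leq 0$, and then invoke Maximum Principle I (Theorem \ref{maximumprinciple1}) to conclude $\tilde w \leq 0$. The computation behind the weighting is correct, the finiteness of $L(\tau)$ follows as you say from $u \in C^{0,0}(\bar I_\tau)$ and continuity of $v$ on $[0,\tau]$, and exhausting by slabs recovers the full statement on the common interval of existence. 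This is indeed the standard proof of the result.

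The one place your argument is genuinely incomplete, and you are honest about it, is the lateral boundary: to have $w \leq 0$ on $\partial I \times [0,\tau)$ you need $u \leq v$ there, but the hypothesis only supplies $u \leq C = v(0)$, and $v(0) \leq v(t)$ holds only if $v$ is non-decreasing, which the ODE $\dot v = g(v,t)$ does not guarantee. This, however, is really an imprecision in the theorem as transcribed in the paper rather than a defect of your proof: the source result in \cite{TP} is the scalar maximum principle on a \emph{closed} manifold, and every application in this paper is on $S^1$, where $\mathscr{P}S^1_T = S^1 \times \{0\}$ has no lateral component and the issue disappears. For a genuine bounded interval one would need to add the hypothesis $u \leq v$ on $\partial I \times [0,T)$ (or assume $v$ monotone); with that, your argument is complete.
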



If we have non-positive data on the parabolic boundary, then we have a stronger result which does not require any sign conditions on the reaction term $g(x,t)$ as in Theorem \ref{maximumprinciple1}. A classical result \cite{PW} simply requires this reaction term to be uniformly bounded. In fact, the uniformly bounded condition can be weakened to locally bounded since the analysis are all done locally. For our purposes, we provide the proof for a simpler case for which we have vanishing boundary data and the coefficients are allowed to blow up to infinity at the boundaries. 

\begin{mythm}  \label{maximumprinciple3} Suppose that $I \subset \mathbb{R}$ is a bounded open interval and $u \in C^{2,1}(I_T) \cap C^{0,0}(\bar{I}_T)$ is a solution to the parabolic inequality $\mathcal{P}(u)=\partial_t u  - u''-f(x,t) u' -g(x,t)u \leq 0$ where $f,g: \mathring{I}_T \rightarrow \mathbb{R}$ are continuous functions in $I_T$. If $u \leq 0$ on $\mathscr{P}I_T$, then $u \leq 0$ in $I_T$. \hfill $\blacksquare$
\end{mythm}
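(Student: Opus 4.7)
The strategy is proof by contradiction combined with a localised application of the strong maximum principle. Suppose, for contradiction, that $u(x_0, t_0) > 0$ at some $(x_0, t_0) \in I_T$. By compactness of $\bar{I}_T$ and continuity of $u$, the supremum $M := \sup_{\bar{I}_T} u > 0$ is attained; since $u \leq 0$ on $\mathscr{P}I_T$ (extended by continuity to $\partial I \times \{T\}$), the maximiser must lie in $I \times (0, T]$. I would then define the earliest time at which this maximum is attained, $\tau^* := \inf\{t \in (0, T] : \max_{\bar{I}} u(\cdot, t) = M\}$, itself realised at some $(x_\tau, \tau^*)$ with $x_\tau \in I$. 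Showing $\tau^* = 0$ would then directly contradict $u(x_\tau, 0) \leq 0 < M$.

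\medskip

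To rule out $\tau^* > 0$, I would work on a small compact parabolic box. Pick $\rho > 0$ so small that $R := [x_\tau - \rho, x_\tau + \rho] \times [\tau^* - \rho, \tau^*] \subset \mathring{I}_T$, first restricting to $\bar{I} \times [0, T - \varepsilon]$ and later sending $\varepsilon \to 0$ if needed to ensure $\tau^* < T$. On $R$ the continuous coefficients $f$ and $g$ are bounded, say $|g| \leq K$; the classical exponential substitution $v := u e^{-Kt}$ then satisfies a parabolic inequality with nonpositive reaction coefficient $g - K \leq 0$. Applying the standard strong maximum principle to $v$ on $R$ forces constancy of $v$ on the parabolic connected component through its maximiser, which upon reverting via $u = v e^{Kt}$ propagates strict positivity of $u$ onto a nontrivial backward region reaching time $\tau^* - \rho$. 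Iterating this procedure along a chain of compact boxes chained backwards from $(x_\tau, \tau^*)$ then extends the positivity of $u$ all the way down to $t = 0$, contradicting the boundary hypothesis.

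\medskip

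The main obstacle is making the backward propagation rigorous. The exponential rescaling $v = u e^{-Kt}$ distorts the location of the maximum, so the parabolic component of $v$'s maximiser must be identified carefully; one does this by exploiting the strict inequality $u < M$ on $R \cap \{t < \tau^*\}$, together with $u \leq M$ at $t = \tau^*$, and by shrinking $\rho$ to push this argument through. Moreover, the bound $K$ depends on the box $R$ and can grow unboundedly as $R$ approaches the parabolic boundary $\mathscr{P}I_T$, so the admissible box size $\rho$ at each step may shrink; the iteration must therefore be controlled so as to terminate with a quantitative strict positivity at $t = 0$. This is achieved by using the boundary hypothesis $u \leq 0$ on $\partial I \times [0, T]$ to keep the spatial centres of the iterated boxes uniformly away from $\partial I$, and by controlling the accumulated factor $\prod_j e^{-K_j \rho_j}$ from below along the chain so that a positive lower bound for $u$ survives in the limit at $t = 0$.
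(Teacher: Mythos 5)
The paper's source does not actually contain a proof of this theorem (the surrounding text announces one for ``vanishing boundary data'' but no proof environment follows the statement), so a line-by-line comparison is impossible; I can only assess your argument on its own terms.

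Your overall shape (contradiction, locate the maximum, localise to a compact parabolic box, rescale by $e^{-Kt}$, invoke the strong maximum principle, chain backwards to $t=0$) is a reasonable strategy, but there are two genuine gaps. First, the strong maximum principle applied to $v = u e^{-Kt}$ on the box $R$ yields constancy of $v$ on the backward domain \emph{only if} the maximum of $v$ over $\bar R$ is attained at an interior or terminal-interior point of $R$. You acknowledge this issue but do not resolve it: since the factor $e^{-Kt}$ is strictly decreasing in $t$, the maximiser of $v$ can migrate to the parabolic boundary $\mathscr{P}R$ (in particular to the bottom slice $t = \tau^* - \rho$), in which case the strong maximum principle applied on $R$ gives you nothing. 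Merely shrinking $\rho$ does not help, because you would simultaneously need a lower bound on $u$ on $\mathscr{P}R$ relative to $M$ to force the maximiser of $v$ inward, and no such bound is available. Second, even granting constancy of $v$ on each box, the conclusion you derive is that $u(\cdot, t_0 - \rho)$ is positive but \emph{strictly smaller} than $M$ (since $u = v_0 e^{Kt}$ and $t_0 - \rho < t_0$), so each stage of the chain reduces the attained maximum; the ``accumulated factor $\prod_j e^{-K_j\rho_j}$'' that you must control from below, and the competing requirement that $\sum_j \rho_j$ reach $\tau^*$, are left entirely quantitative assertions with no argument. If $g$ is only assumed continuous on $\mathring I_T = I \times (0,T)$ as the domain in the statement suggests, the constants $K_j$ do blow up as the chain approaches $t = 0$ and the iteration genuinely does not close; indeed under that literal reading the theorem itself is false, as witnessed by $I=(0,1)$, $f\equiv 0$, $u(x,t) = x(1-x)t$, $g(x,t) = \tfrac{1}{t} + \tfrac{2}{x(1-x)}$, for which $\mathcal P(u) \equiv 0$, $u \leq 0$ on $\mathscr P I_T$, yet $u>0$ in the interior. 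The theorem is only plausible under the stronger reading (presumably the intended one) that $g$ is continuous on $I_T = I\times[0,T)$, so that $g$ is locally bounded up to and including $t=0$; under that hypothesis the chain can be anchored at the fixed spatial centre $x_\tau$, along which $g$ is uniformly bounded, but your proposal does not make this observation, and the first gap concerning the location of the maximiser of $v$ remains in any case.
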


\end{document}